\documentclass[a4paper]{article}

\usepackage[plainpages=false,colorlinks=true,
            linkcolor=black,urlcolor=black,citecolor=black]{hyperref}
\usepackage{geometry}

\usepackage{amsmath,amssymb,amsthm, amsfonts,mathrsfs,cite}
\usepackage{color}
\usepackage{booktabs}
\usepackage{tikz}
\usepackage{scrextend}
\usepackage{arydshln}
\usepackage[font=scriptsize,labelfont=bf]{caption}

\usepackage{algpseudocode}
\usepackage{graphicx}
\usepackage{array, tabularx}
\usepackage{algorithm,algcompatible}
\usepackage{booktabs} % for much better looking tables
\usepackage{paralist} % very flexible & customisable lists (eg. enumerate/itemize, etc.)
\usepackage{verbatim} % adds environment for commenting out blocks of text & for better verbatim
\usepackage{subfig} % mak
\usepackage{xcolor,marginnote,enumitem}
\usepackage[numbers]{natbib}
%%%%%%%%%%%%%%%%%%%%%%%
\numberwithin{equation}{section}

\theoremstyle{theorem}
\newtheorem{lemma}{Lemma}
\newtheorem{theorem}{Theorem}
\newtheorem{proposition}{Proposition}

\newtheorem{assumption}{Assumption}

\theoremstyle{remark}
\newtheorem{remark}{Remark}

\theoremstyle{definition}
\newtheorem{definition}{Definition}

%%%%%%%%%%%%%%%%%%%%%%%%%%%%%%%%%%%%%%%%%%%%%%%%%%%%
\DeclareMathOperator{\dom}{dom}

\DeclareMathOperator*{\argmin}{arg\,min}

\DeclareMathOperator{\Div}{div}

\DeclareMathOperator{\TGV}{TGV}
\DeclareMathOperator{\BD}{BD}
\DeclareMathOperator{\BV}{BV}
\newcommand{\symgrad}{\mathcal{E}}
\newcommand{\mE}{\mathcal{E}}

\newcommand{\grad}{\nabla}

\newcommand{\lookUp}[1]{}

\newcommand{\norm}[2][]{\|{#2}\|_{#1}}

\newcommand{\scp}[3][]{\langle{#2},{#3}\rangle_{#1}}

\newcommand{\RR}{\mathbf{R}}

\newcommand{\mF}{\mathcal{F}}

\newcommand{\mI}{\mathcal{I}}

\newcommand{\mP}{\mathcal{P}}
\newcommand{\mV}{\mathcal{V}}
\newcommand{\mN}{\mathcal{N}}

\algnewcommand\INPUT{\item[\textbf{Input:}]}%
\algnewcommand\OUTPUT{\item[\textbf{Output:}]}%
\newcommand{\gap}{\mathfrak{G}}

\DeclareMathAlphabet{\mathbfit}{OML}{cmm}{b}{it}

%%%%%%%%%%%%%%%%%%%%%%%%%%%%%%%%%%%%%%%%%%%%%%%%%%%%%%%%%%%%%%%%%%%%%%

%\date{}

\begin{document}

%\title{Ergodic convergence rates for accelerated Douglas-Rachford iterations}
\title{An Efficient Augmented Lagrangian Method with Semismooth Newton Solver for Total Generalized Variation}

\author{Hongpeng Sun\thanks{Institute for Mathematical Sciences,
		Renmin University of China, 100872 Beijing, China.
		Email: \href{mailto:hpsun@amss.ac.cn}{hpsun@amss.ac.cn}.} }

\maketitle
\begin{abstract}
Total generalization variation (TGV) is a very powerful and important regularization for various inverse problems and computer vision tasks.
In this paper, we propose a semismooth Newton based augmented Lagrangian method for solving this problem.   The augmented Lagrangian method (also called as method of multipliers) is widely used for lots of smooth or nonsmooth variational problems. However, its efficiency heavily depends on solving  the corresponding  coupled and nonlinear system together and simultaneously. 
With efficient primal-dual semismooth Newton methods for the challenging and highly coupled nonlinear subproblems involving total generalized variation,  we develop a highly efficient and competitive augmented Lagrangian method  compared with some fast first-order method. With the analysis of the metric subregularities of the corresponding functions, we give both the global convergence and local linear convergence rate for the proposed augmented Lagrangian methods.
\end{abstract}

\paragraph{Key words.}{Augmented Lagrangian method, \and primal-dual semismooth Newton method, \and local linear convergence rate, \and metric subregularity}

%\keywords{Preconditioned Douglas-Rachford iteration \and Primal-dual algorithms \and Variational image denoising and deblurring \and Total Generalized Variation \and Block preconditioner}
\paragraph{AMS subject classifications.}
65K10, 	49J52,
% 65K Mathematical programming, optimization and variational techniques
%%  65K10 Optimization and variational techniques
49M15
% 49    Calculus of variations and optimal control; optimization
% 49K          Optimality conditions
%% 49K35  Minimax problems

\section{Introduction}

Total generalized variation (TGV) is an important regularization and image prior to various applications including medical imaging, computer vision, tomography, inverse problems in mathematical physics, and so on \cite{BPK, BNW, KNPR}. By including both the first 
and the second derivatives, TGV can overcome the staircase artifacts and bring out some advantages compared with total variation \cite{BPK}.  Here we mainly focus on the second-order TGV \cite{BPK}. 
Due to the complicated structure of the TGV \cite{BPK, BT}, the computation of the TGV regularized problem is usually very time-consuming and challenging. Currently, the first-order primal-dual method \cite{CP} is widely used. The fast iterative shrinkage-thresholding algorithm (FISTA) is employed in \cite{BPK} and the preconditioned Douglas-Rachford splitting method is also developed \cite{BS}.
There are first-order optimization methods. To the best of our knowledge, the second-order semismooth Newton method is first discussed in \cite{HPRS} with additional Tikhonov regularization on the dual variables.

In this paper, we are interested in the augmented Lagrangian method (abbreviated as ALM throughout this paper) originated by Hestenes \cite{HE} and Powell \cite{POW}.  ALM is very flexible for constrained optimization problems including both equality and inequality 
constraints \cite{BE,KK}.
It is a kind of bridge between first-order methods and second-order Newtons method. We refer to \cite{BE, FG, Roc2} for its early developments and \cite{BE, FG, KK} for the comprehensive and extensive studies on convex, nonsmooth, and variational optimization problems.   Furthermore, the convergence of ALM can be concluded in the general and powerful proximal point algorithm framework for convex optimization \cite{Roc1, Roc2}, due to the equivalence between ALM and the proximal point algorithm applying to the essential dual problem \cite{Roc1}.

However, it is challenging to solve the nonlinear and coupling systems simultaneously  while applying ALM. This is different from the alternating direction method of multipliers (ADMM) type methods \cite{GL1,FG}, which can decouple the unknown variables and update them consecutively like the Gauss-Seidel method.  For ALM, the extra effort is deserved if the nonlinear system can be solved efficiently. This is due to the appealing linear or asymptotic superlinear convergence of ALM with increasing step sizes \cite{Roc1, Roc2, LU}. We employ the semismooth Newton methods for the nonlinear subproblems of ALM, which already have lots of successful applications in semidefinite programming \cite{ZST}, compressed sensing \cite{LST, ZZST}, friction and contact problem \cite{Gor, Gor1} and total variation regularized imaging problems \cite{HK1,KK1}.

Currently, no attempt has been made to develop an ALM algorithm for TGV. In this paper, we propose a novel semismooth Newton based ALM for the TGV regularized image restoration problem. 
The proposed algorithm is based on applying ALM to the perturbed primal problem of TGV, where we can benefit from the strong convexity. With the ALM framework, we do not need the Tikhonov regularization on the dual variables as did in \cite{HPRS} for TGV regularized image restoration, where semismooth Newton method is applied directly to the corresponding optimality conditions.  The ALM can be seen as a kind of globalization of semismooth Newton methods, which directly aims at the perturbed original problem without Tikhonov regularizations on the dual variables. With ALM, the step sizes do not need to tend to infinity where the linear subproblems are quite ill-posed.  Our contributions belong to the following parts. First, by introducing some auxiliary variables, we use primal-dual semismooth Newton method \cite{HS} for the nonlinear system of ALM, which is very efficient without any globalization strategy including the Armijo line search experimentally. The proposed ALM is very efficient compared with some first-order algorithm such as the primal-dual method \cite{CP}, especially for very high accuracy tasks. 
Second, with the help of the calm intersection theorem \cite{KKU}, we also prove the novel metric subregularity of the maximal monotone operator associated with the dual problem under mild condition, which is more complicated compared with the TV (total variation) case \cite{SUNA} since both the primal and dual variables are highly coupled. The corresponding metric subregularity leads to the linear or asymptotic superlinear convergence rate of the dual sequence \cite{Roc1, Roc2, LU}. We can also obtain the asymptotic linear or superlinear convergence rate of the primal sequence for a certain case.

The rest of this paper is organized as follows. In section \ref{sec:tgvALM}, we give a brief introduction to TGV regularization and the ALM algorithm. In section \ref{sec:SSN}, we investigate the primal-dual semismooth Newton methods for ALM by introducing auxiliary variables, which turn out to be very efficient.  In section \ref{sec:conver:alm}, we analyze the metric subregularity for the maximal monotone operator associated with the corresponding dual problem. Together with the convergence of the semismooth Newton method, we get the corresponding asymptotic linear or superlinear convergence rate. In section \ref{sec:numer}, we present detailed numerical tests for all the algorithms including the comparison with some efficient  first-order algorithm. In  section \ref{sec:conclude}, we give some final conclusions. 

\section{TGV and augmented Lagrangian method} \label{sec:tgvALM}
In this section, we give a brief introduction of the TGV regularization and ALM together with some basic notations and terminology.
The $L^2$-TGV regularized image restoration model reads as follows \cite{BPK},
\begin{equation}
\label{eq:tgv_primal:o}
\min_{u \in \BV(\Omega)} \ F(u) + \TGV_{\alpha}^{2}(u), \quad \alpha =(\alpha_0, \alpha_1),
\end{equation}
with $\Omega$ denoting the image domain,  $F(u) = \norm[2]{Ku - f_0}^2/2+ \frac{\nu}{2}\|\nabla u\|_{2}^2$ representing the data fidelity term, $K$ being a linear and bounded operator, and $f_0$ being the noisy or degraded image.  $\TGV_{\alpha}^{2}(u)$ denotes the second order total generalized variation (TGV) regularization \cite{BPK} with positive  regularization parameters $\alpha_0$ and $\alpha_1$ \cite{BPK}.
Henceforth, we assume $-\nu \Delta + K^*K $ is positive definite  with $\nu= 0$ if $K^*K$ is positive definite and $\nu>0$ otherwise.
It is convenient that the TGV regularization can be reformulated as follows \cite{BS},
\begin{equation}\label{tgv:bv}
\TGV_{\alpha}^{2}(u) = \min_{w \in \BD(\Omega)} \ \alpha_{1} \| D u -w\|_{\mathcal{M}} + \alpha_{0} \|\mathcal{E}w\|_{\mathcal{M}},
\end{equation}
where $\BD(\Omega)$ denotes the space of vector fields of
{\it Bounded Deformation}, $Du$ represents the distributional
derivative being a vector-valued Radon measure (see Chapter 9.1 of \cite{KK}), and the weak symmetrized derivative $\mathcal{E}w
:= (\nabla w + \nabla w^T)/2$ is a matrix-valued Radon measure \cite{BPK}.  Actually, $Du$ essentially coincides with $\nabla u$ when $u$ is smooth. 
Moreover, $\|\cdot\|_{\mathcal{M}}$ denotes the Radon norm for the
corresponding vector-valued and matrix-valued Radon measures. The norm of $\BD(\Omega)$ is defined by
\begin{equation}
\|w\|_{\BD} :=\|w\|_{1} + \|\mathcal{E}w\|_{\mathcal{M}}.
\end{equation}

Throughout this paper, we will focus on the following perturbed and regularized primal problem in finite-dimensional spaces
\begin{equation}
\label{eq:tgv_primal}
\min_{u \in U, w\in V} \mathfrak{F}(u,w): =\ F(u) + \frac{a}{2}\|w\|_{2}^2 +  \alpha_{1} \| \nabla u -w\|_{1} + \alpha_{0} \|\mathcal{E}w\|_{1}, \tag{P}
\end{equation}
where $\frac{a}{2}\|w\|_{2}^2$ is a Tikhonov regularization term on $w$ with positive constant $a$ and $U$ or $V$ is the corresponding discrete space as follows \cite{BS}, i.e.,
\begin{equation}
U =  \{u: \Omega \rightarrow \RR \}, \quad V = \{w: \Omega \rightarrow \RR^2\}.
\end{equation}
Henceforth, all the space settings are finite-dimensional and all the operators and integrals are corresponding to the discrete settings. The motivation of the adding term  $\frac{a}{2}\|w\|_{2}^2$ is as follows. First, it will bring out strong convexity on the primal variable $(u,w)$ together with strong convexity of $F(u)$ on $u$, where various convex algorithms can be benefited. For example, the first-order primal-dual method can be accelerated \cite{CP} and the regularity of Newton derivatives for semismooth Newton methods can be guaranteed. Second, the adding term $\frac{a}{2}\|w\|_{2}^2$ can keep the quality of image  reconstructions as the original TGV regularization experimentally, which as will be shown in numerics.

The primal form \eqref{eq:tgv_primal} can be written as the following primal-dual form (see \cite{BS})
\begin{equation}
\label{eq:tgv-denoising-saddle:original}
\min_{\substack{u \in U, w \in V}} \max_{\substack{\lambda \in V, \mu \in W}} \mathfrak{ L}(u,w,\lambda,\mu), \
\end{equation}
where given indicator functions $\mI_{\{\norm[\infty]{\lambda} \leq \alpha_1\}}(\lambda)$  and $\mI_{\{\norm[\infty]{\mu} \leq \alpha_0\}}(\mu)$ \cite{BS},  $\mathfrak{L}$ is defined by
\begin{equation}\label{eq:frak:L}
\mathfrak{ L}(u,w,\lambda,\mu): =
\scp[2]{\grad u - w}{\lambda} + \scp[2]{\symgrad w}{\mu} + F(u) + \frac{a}{2}\|w\|_{2}^2-
\mI_{\{\norm[\infty]{\lambda} \leq \alpha_1\}}(\lambda) - \mI_{\{\norm[\infty]{\mu} \leq \alpha_0\}}(\mu).
\end{equation}
Here $\lambda \in H_{0}(\Div;\Omega)$ with vanishing Dirichlet boundary condition with the discrete space $V$.  $\mu \in \mathcal{C}_{c}(\Omega, \text{Sym}^2(\mathbb{R}^2))$ as in \cite{BPK} and the corresponding discrete space $W$ is defined as
\[
W = \{\mu :\Omega \rightarrow S^{2 \times 2}\}.
\]
Besides, the $L^2$ inner product and $\|\cdot\|_{2}^2$ in \eqref{eq:frak:L} are defined by
\[
\langle a, b \rangle_{2}: = \int_{\Omega} \langle a,b \rangle dx, \quad \|w\|_{2}^2: = \int_{\Omega} |w|^2 dx.
\]
The inner products $\langle \cdot, \cdot \rangle $ and the Euclid norm $|\cdot|$ above are defined as follows
\begin{align}
&\langle  u, v \rangle :=  u v, \quad u, v \in U, \quad |u|:=|u|_{2} = \sqrt{\langle  u, u \rangle },  \label{eq:inner:norms} \\
&\langle  \lambda, s \rangle := \lambda^{T}s =   \lambda^1s^1 +  \lambda^2s^2, \ \lambda = (\lambda^1,\lambda^2)^T, \ s = (s^1,s^2)^{T} \in V, \ |\lambda|:= |\lambda|_2 = \sqrt{\langle  \lambda, \lambda \rangle }, \notag \\
&\langle  \mu, r \rangle :=  \mu^1r^1 +  \mu^2r^2 + 2\mu^3r^3, \ \mu = (\mu^1,\mu^2, \mu^3)^T, \  r = (r^1,r^2,r^3)^{T} \in W, \ |\mu| := |\mu|_2= \sqrt{\langle  \mu, \mu \rangle }. \notag 
\end{align}
For $\mu\in W$, $\lambda \in V$, the discrete $\|\cdot \|_t$ norms with $1\leq t < \infty$ and $\|\cdot\|_{\infty}$  are
defined as follows,
\begin{align}
&\|\lambda\|_t = \Bigl( \sum_{(i,j) \in \Omega}
\bigl( (\lambda^1_{i,j})^2 + (\lambda^2_{i,j})^2 
\bigr)^{t/2}\Bigr)^{1/t}, \ \
\|\lambda\|_\infty = \max_{(i,j) \in \Omega} \ \sqrt{(\lambda^1_{i,j})^2 + (\lambda^2_{i,j})^2}, \label{eq:norms:pq} \\
&\|\mu\|_t = \Bigl( \sum_{(i,j) \in \Omega}
\bigl( (\mu^1_{i,j})^2 + (\mu^2_{i,j})^2 + 2(\mu^3_{i,j})^2
\bigr)^{t/2}\Bigr)^{1/t}, \ \
\|\mu\|_\infty = \max_{(i,j) \in \Omega} \ \sqrt{(\mu^1_{i,j})^2 + (\mu^2_{i,j})^2 +
	2(\mu^3_{i,j})^2}. \notag
\end{align}
%\[
%\|q\|_{\infty} = \sup_{x\in\Omega}(\sum_{i=1}^2|q_{ii}(x)|^2 + 2 \sum_{i<j}|q_{ij}|^2)^{1/2}, \quad \|p\|_{\infty} = \sup_{x\in \Omega}(\sum_{i=1}^2 |p_i(x)|^2)^{1/2}.
%\]
%\section*{Acknowledgements}
%
%Discrete setting, also see \cite{BS}. Consider the spaces
%\[
%U =  \{u: \Omega \rightarrow \RR \}, \quad V = \{u: \Omega \rightarrow \RR^2\}, \quad W = \{u :\Omega \rightarrow S^{2 \times 2}\}.
%\]
%We identify elements in $q \in W$ with
%$q = (q^1,q^2,q^3) \in U^3$ and employ the scalar product
%\[
%\scp[W]{q}{q'} = \scp[U]{q^1}{(q')^1} + \scp[U]{q^2}{(q')^2}
%+ 2\scp[U]{q^3}{(q')^3}.
%\]
Utilizing forward differences and its adjoint divergence are defined as follows \cite{BS}
\begin{equation}\label{eq:operators:adjoints}
\nabla  = \begin{bmatrix}
\partial_x^+ \\
\partial_y^+
\end{bmatrix}, \quad \langle \nabla u, \lambda \rangle_{2}  = \langle  u,  \nabla ^*\lambda \rangle_{2}, \quad  \nabla^* = -\Div, \quad \Div  =  [\partial_x^-, \partial_y^-].
\end{equation}
The symmetrized derivative thus can be defined as follows
\[
\symgrad w =
\begin{bmatrix}
\partial_x^+ w^1 & \tfrac12(\partial_y^+ w^1
+ \partial_x^+ w^2) \\
\tfrac12(\partial_y^+ w^1
+ \partial_x^+ w^2) &
\partial_y^+ w^2
\end{bmatrix}
:=
\begin{bmatrix}
\partial_x^+ w^1 \\
\partial_y^+ w^2 \\
\tfrac12(\partial_y^+ w^1
+ \partial_x^+ w^2)
\end{bmatrix}
\]
where the second equation is understood in terms of the
identification $W = U^3$ \cite{BS,BT}. Consequently, the negative adjoint realizes
a discrete negative divergence operator according to
$\scp[W]{\symgrad w}{\mu} = \scp[V]{ w}{\symgrad^*\mu}=-\scp[V]{w}{\Div \mu}$ for all $w \in V$, $\mu \in W$,
leading to
\[
\Div \mu =
\begin{bmatrix}
\partial_x^- \mu^1 + \partial_y^- \mu^3 \\
\partial_x^- \mu^3 + \partial_y^- \mu_2
\end{bmatrix}, \quad \mE^* = -\Div.
\]
Actually, by the Fenchel-Rockafellar duality theory \cite{HBPL,KK}, the dual problem of \eqref{eq:tgv_primal} becomes
\begin{subequations}\label{eq:dual:tgv} 
	\begin{align}
	\max_{\lambda\in V, \mu\in W }  -\mathfrak{D}(\lambda,\mu):= &-\frac{1}{2}\|\Div \lambda + K^*f_0\|_{H^{-1}}^2 + \frac{1}{2} \|f_0\|_2^2  - \frac{1}{2a} \|\lambda-\mE^* \mu \|_2^2 \\
	&-\mI_{\{\norm[\infty]{\lambda} \leq \alpha_1\}}(\lambda) - \mI_{\{\norm[\infty]{\mu} \leq \alpha_0\}}(\mu),  
	\end{align}
\end{subequations}
where $H := K^*K - \mu \Delta$ being positive definite.
By the first-order optimality (KKT) conditions of \eqref{eq:frak:L} (e. g., see the  \cite{HS} (Theorem 2.1)), the solution $(\bar u, \bar w)$ of the primal problem \eqref{eq:tgv_primal} and the dual solutions $(\bar \lambda, \bar \mu)$ of \eqref{eq:dual:tgv} have the following relations
\begin{subequations}\label{eq:optimalites:tgv:pd}
	\begin{align}
	H\bar u - \Div \bar \lambda &= K^*f_0, \\
	a\bar w-\bar \lambda + \mE^* \bar  \mu&=0,\\
	-\alpha_1( \nabla \bar u- \bar w) + |\nabla \bar u -\bar w|\bar \lambda &= 0, \quad \text{if} \quad |\bar \lambda| = \alpha_1, \\ %\label{eq:hk:resi:neq0} 
	\nabla \bar u -\bar w& = 0, \quad \text{if} \quad  |\bar \lambda| < \alpha_1, \\% \label{eq:hk:resi:equw} 
	-\alpha_0 \mE  \bar w + |\mE \bar w|\bar \mu &= 0, \quad \text{if} \quad  |\bar \mu| = \alpha_0,  \\ %\label{eq:hk:resi:neqw}
	\mE \bar w& = 0, \quad \text{if} \quad  |\bar \mu| < \alpha_0.% \label{eq:hk:resi:eqw}
	\end{align}
\end{subequations}

For employing ALM, let  us introduce the following auxiliary variables $h_1 \in V$, $h_2 \in W$ 
\[
h_1 = \nabla u - w, \quad h_2 = \mE w,
\]
and the corresponding Lagrangian multipliers $\lambda \in V$, $\mu \in W$. Introducing the step size $\sigma$, we define the augmented Lagrangian function for \eqref{eq:tgv_primal},
\begin{align}
L_{\sigma}(u,w,h_1, h_2;\lambda, \mu): =& F(u)  +\frac{a}{2}\|w\|_{2}^2 + \alpha_1 \|h_1\|_{1} + \alpha_0 \|h_2\|_{1} + \langle \lambda, \nabla u -w -h_1 \rangle_2 \notag \\
&+ \langle \mu, \mE w - h_2 \rangle_2 + \frac{\sigma}{2}\|\nabla u-w -h_1\|_{2}^2 + \frac{\sigma}{2}\| \mE w - h_2\|_{2}^2.  \label{eq:aug:lag:uw1}
% =& F(u)  +\frac{a}{2}\|w\|_{2}^2 + \alpha_1 \|h_1\|_{1} + \alpha_0 \|h_2\|_{1}
% -\frac{1}{2\sigma}\|\lambda\|_{2}^2 \label{eq:aug:lag:uw"primal} \\
% & + \frac{\sigma}{2}\|\frac{\lambda}{\sigma} + \nabla u-w -h_1\|_{2}^2 + \frac{\sigma}{2}\|\frac{\mu}{\sigma} + \mE w -h_2\|_{2}^2 -\frac{1}{2\sigma}\|\mu\|_{2}^2. \notag
\end{align}
With the augmented Lagrangian $ L_{\sigma}(u,w,h_1, h_2;\lambda, \mu)$, given $\lambda^0$, $\mu^0$ and $\sigma_0$, the classical augmented Lagrangian method for solving \eqref{eq:tgv_primal} can be written as follows \cite{BE,FG,GL1}, for $k=0, 1\cdots, $
\begin{align}
(u^{k+1}, w^{k+1}, h_1^{k+1}, h_{2}^{k+1}) &:= \argmin_{u,w,h_1, h_2}  L_{\sigma_k}(u,w,h_1, h_2;\lambda^k, \mu^k), \label{eq:update:primals:alm}\\
\lambda^{k+1} &:= \lambda^k + \sigma_k(\nabla u^{k+1} - w^{k+1}-h_1^{k+1}),\label{eq:update:lambda} \\
\mu^{k+1} &:= \mu^k + \sigma_k(\mE w^{k+1} -h_2^{k+1}),\label{eq:update:mu} \\
\sigma_{k+1} &\geq \sigma_k >0, \quad \sigma_{k} \rightarrow \sigma_{\infty} < +\infty.
\end{align}
For fixed $\lambda^k$, $\mu^k$ and $\sigma_k$, with direct calculations, the optimality  conditions of \eqref{eq:update:primals:alm} can be written as
\begin{subequations}\label{eq:opti:h}
	\begin{align}
	&H u- f + \nabla^*[ {\lambda^k} + \sigma_k(\nabla u-w - h_1)]=0, \\
	&aw - [{\lambda^k} + \sigma_k(\nabla u-w - h_1)] + \mE^*[\sigma_k (\mE w-h_2) + \mu^k]=0, \\
	& h_1 = (I + \frac{\alpha_1}{\sigma_k} \partial \|\cdot\|_1)^{-1}(\frac{\lambda^k}{\sigma_k}+ \nabla u -w),\\
	& h_2 = (I + \frac{\alpha_0}{\sigma_k} \partial \|\cdot\|_1)^{-1}(\frac{\mu^k}{\sigma_k}+ \mE w),
	\end{align}
\end{subequations}
where $f: = K^* f_0$ henceforth. Semismooth Newton methods \cite{KK} can be employed directly to solve such kind of nonlinear equation \eqref{eq:opti:h}. However, we will introduce another equivalent nonlinear system compared to \eqref{eq:opti:h} through new auxiliary variables, which is more convenient for semismooth Newton solvers as shown for some variants of TV model \cite{SUNA,HS} and TGV model \cite{HPRS}.
Actually, by the Moreau's equality \cite{HBPL,CP} 
\begin{equation}\label{eq:moreau:indentity}
x = (I + \tau \partial G)^{-1}(x) + \tau (I  +\frac{1}{\tau} \partial G^*)^{-1}(\frac{x}{\tau}),
\end{equation}
and the notations $G_1(\lambda):= \mI_{\{\norm[\infty]{\lambda} \leq \alpha_1\}}(\lambda)$, $G_2(\mu):= \mI_{\{\norm[\infty]{\mu} \leq \alpha_0\}}(\mu)$, we arrive at
\begin{subequations}\label{eq:moreau:sub12}
	\begin{align}
	&{\lambda^k}+ {\sigma_k}(\nabla u -w) - \sigma h_1 = (I + \sigma_k \partial G_1)^{-1}({\lambda^k}+ {\sigma_k}(\nabla u -w)), \\
	&  {\mu^k}+ {\sigma_k} (\mE w -h_2) = (I + \sigma_k \partial G_2)^{-1}({\mu^k}+ {\sigma_k}\mE w).
	\end{align}
\end{subequations}
Now, let  us introduce the projections for arbitrary $w \in V$ and $l \in W$ with $\mP_{\alpha_1}(w):= (I + \sigma_k \partial G_1)^{-1}(w)$ and $\mP_{\alpha_0}(l):= (I + \sigma_k \partial G_2)^{-1}(l)$, i.e., 
\begin{equation}\label{eq:projections:def}
\mP_{\alpha_1}(w):= {w}/{\max(1.0, {|w|}/{\alpha_1})},  \ \mP_{\alpha_0}(l):=  {l}/{\max(1.0, {|l|}/{\alpha_0})}, 
\end{equation}
which are understood in pointwise sense. Let us also denote
\begin{subequations}\label{eq:moreau:sub12:pq}
	\begin{align}
	p: &= \mP_{\alpha_1}({\lambda^k}+ {\sigma_k}(\nabla u -w))= \dfrac{{\lambda^k}+ {\sigma_k}(\nabla u -w)}{\max(1.0, \frac{|{\lambda^k}+ {\sigma_k}(\nabla u -w)|}{\alpha_1})},  \\
	q: &= \mP_{\alpha_0}({\mu^k}+ {\sigma_k}\mE w)=\dfrac{{\mu^k}+ {\sigma_k}\mE w}{\max(1.0, \frac{|{\mu^k}+ {\sigma_k}\mE w|}{\alpha_0})}.
	\end{align}
\end{subequations}
With the auxiliary variables $p$ and $q$ in \eqref{eq:moreau:sub12:pq} with notation $x := (u,w,p,q)^T$, the optimality conditions in \eqref{eq:opti:h} then becomes
\begin{equation}\label{eq:opti:pq}
\mathcal{F}(x) = 0, \ \ 
\mathcal{F}(x) := \begin{bmatrix}
Hu-f + \nabla^*p \\
aw - p + \mE^*q  \\
-({\lambda^k}+ {\sigma_k}(\nabla u -w))+\max(1.0, \frac{|{\lambda^k}+ {\sigma_k}(\nabla u -w)|}{\alpha_1}) p \\
- ({\mu^k}+ {\sigma_k}\mE w) + \max(1.0, \frac{|{\mu^k}+ {\sigma_k}\mE w|}{\alpha_0})q 
\end{bmatrix}.
\end{equation}
Henceforth, we will focus on semismooth Newton methods for solving \eqref{eq:opti:pq} instead of \eqref{eq:opti:h}. Our motivation is that the primal-dual semismooth Newton method to be discussed can be highly efficient for the nonlinear system \eqref{eq:opti:pq} compared to semismooth Newton method for solving \eqref{eq:opti:h}, which is also shown in \cite{SUNA} for TV regularization problems. Besides, experimentally, no line search techniques including Armijo line search are needed as will be shown  in numerics. %Actually, similar observation is  already obtained for the TV case \cite{SUNA}

For the updates of the Lagrangian multipliers $\lambda^{k+1}$ and $\mu^{k+1}$, with  \eqref{eq:update:lambda}, \eqref{eq:update:mu}, and \eqref{eq:moreau:sub12}, the updates during each ALM iteration after \eqref{eq:update:primals:alm} can alternatively be 
\begin{subequations}\label{eq:update:multi:pqway}
	\begin{align}
	\lambda^{k+1}&=\lambda^k+\sigma_k(\nabla u -\sigma_k w) -\sigma_k h_1 = (I + \sigma_k \partial G_1)^{-1}({\lambda^k}+ {\sigma_k}(\nabla u -w))=p, \\
	\mu^{k+1}&=\mu^k+\sigma_k \mE w -\sigma_k h_2 = (I + \sigma_k \partial G_2)^{-1}({\mu^k}+ {\sigma_k}\mE w)=q,
	\end{align}
\end{subequations}
which are nonlinear updates compared to the linear updates \eqref{eq:update:lambda} and \eqref{eq:update:mu}.  We refer to \cite{KK} (chapter 4) for general nonlinear updates of Lagrangian multipliers with more general derivations and variants of ALM. 

\section{Semismooth Newton method and Newton derivative}\label{sec:SSN}
In this section, we  will look closely at the primal-dual semismooth Newton method and its delicate application to \eqref{eq:opti:pq} along with the ALM for the primal problem \eqref{eq:tgv_primal}. For semismooth Newton methods, the Newton derivative is of critical importance.
The following definition of Newton derivative is originally for Banach spaces \cite{KK}, which is also applicable for our finite-dimensional space cases.
\begin{definition}\label{def:newton:deri}[Newton differentiable and Newton Derivative \cite{KK}] $F: D \subset X \rightarrow Z$ is called Newton differentiable at $x$ if there exist an open neighborhood $N(x) \subset D$ and mapping $\mV: N(x)\rightarrow \mathcal{L}(X,Z)$ such that (Here the spaces $X$ and $Z$ are Banach spaces.)
	\begin{equation}
	\lim_{|h|\rightarrow 0} \frac{|F(x+h)-F(x)-\mV(x+h)h|_{Z}}{|h|_X}=0.
	\end{equation}
	The family $\{\mV(s): s \in N(x)\}$ is called a Newton derivative of $F$ at $x$.
\end{definition}
If $F:\mathbb{R}^n \rightarrow \mathbb{R}^m$ and the set of mapping $\mV$ is Clarke's generalized gradient (or Clarke's generalized derivative) $\partial_{C} F$ \cite{CL}, we call $F$ is semismooth \cite{KKU} as in the following definition.
\begin{definition}[Semismoothness \cite{RM, LST, MU}] Let $F: O \subseteq X \rightarrow Y$ be a locally Lipschitz continuous function on the open set $O$, where $X$ and $Y$ are finite-dimensional Hilbert spaces.  $F$ is said to be semismooth at $x \in O$ if $F$ is directionally differentiable at $x$ and for any $\mV\in \partial_{C} F(x+ \Delta x)$ with $\Delta x \rightarrow 0$,
	\[
	F(x+\Delta x) -F(x) -\mV\Delta x = {o}(\|\Delta x\|).
	\]  
\end{definition}
We thus can choose the element of Clark's generalized derivative as the  Newton derivative for semismooth functions. The Newton derivatives of vector-valued functions can be computed component wisely \cite{Cla} (Theorem 9.4). Together with the definition of semismoothness, we have the following lemma.  
\begin{lemma}\label{lem:vector:semismooth:newton}
	Suppose $F : \mathbb{R}^n \rightarrow \mathbb{R}^m$ and $F = (F_1(x), F_2(x), \cdots, F_l(x))^{T}$ with $F_i : \mathbb{R}^n \rightarrow \mathbb{R}^{l_i} $  being semismooth. Here $l_i \in \mathbb{Z}^{+}$ and $\sum_{i=1}^l l_i=m$. Denoting the Newton derivative of $F_i(x)$ as $ D_N F_i(x)$ by definition \ref{def:newton:deri} and assuming $ D_N F_i(x) \in \partial_{C} F_i(x)$, $i  = 1,2,\cdots, l$, then the Newton derivative of $F$ can be chosen as 
	\begin{equation}
	D_N F(x) = \begin{bmatrix}
	D_N F_1(x), D_N F_2(x)
	, \cdots, D_N F_l(x)	\end{bmatrix}^T.
	\end{equation}
\end{lemma}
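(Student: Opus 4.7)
The plan is to verify the claim directly from the definition of Newton differentiability by stacking the block residuals and estimating each block separately using the semismoothness hypothesis on $F_i$. Since $F$ takes values in $\mathbb{R}^m = \mathbb{R}^{l_1}\times\cdots\times\mathbb{R}^{l_l}$ and the proposed candidate $D_N F(x)$ is the vertical concatenation of the $D_N F_i(x)$, the key identity is
\begin{equation*}
F(x+h) - F(x) - D_N F(x+h)\,h \;=\; \bigl(\,F_i(x+h) - F_i(x) - D_N F_i(x+h)\,h \,\bigr)_{i=1}^{l},
\end{equation*}
so the whole residual decomposes into $l$ independent block residuals with a common increment $h$.

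Next, I would invoke semismoothness of each $F_i$ at $x$. By hypothesis, $D_N F_i(x+h)\in \partial F_i(x+h)$, and semismoothness of $F_i$ (as recalled in the preceding definition) gives
\begin{equation*}
\bigl\|F_i(x+h)-F_i(x)-D_N F_i(x+h)\,h\bigr\| = o(\|h\|) \quad \text{as } h\to 0,
\end{equation*}
for each $i=1,\dots,l$. Summing (in any equivalent norm on $\mathbb{R}^m$) the $l$ block contributions, which are all $o(\|h\|)$, yields $\|F(x+h)-F(x)-D_N F(x+h)\,h\|_{\mathbb{R}^m} = o(\|h\|)$. This is precisely the Newton-differentiability condition from the first definition, with Newton derivative given by the stacked matrix $D_N F(x)$. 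The mapping $x\mapsto D_N F(x)$ is well-defined as a selector into $\mathcal{L}(\mathbb{R}^n,\mathbb{R}^m)$ on a common neighborhood obtained by intersecting the (finitely many) neighborhoods associated with each $F_i$.

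The only delicate point is the legitimacy of choosing $D_N F_i(x+h)\in\partial F_i(x+h)$ row-by-row to form a matrix in what plays the role of a Newton derivative for $F$. This is where Clarke's Theorem 9.4 cited in the excerpt is used: Newton derivatives of vector-valued locally Lipschitz functions can be assembled componentwise, even though in general the Clarke generalized Jacobian $\partial F(x)$ is only contained in, not equal to, the product $\partial F_1(x)\times\cdots\times\partial F_l(x)$. I do not need equality here — I only need that the stacked selector serves as a \emph{Newton derivative}, and the block-residual decomposition above together with the $o(\|h\|)$ estimate from semismoothness of each $F_i$ delivers exactly that. No further work is required, as the argument is essentially an exercise in bookkeeping once the block decomposition is written down; there is no substantive analytical obstacle beyond checking that the $o(\|h\|)$ estimates combine properly across the $l$ components.
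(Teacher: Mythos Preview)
Your proposal is correct and matches the paper's approach: the paper does not give a detailed proof of this lemma, merely remarking that it follows from the fact that Newton derivatives of vector-valued functions can be computed componentwise (citing Clarke, Theorem~9.4) together with the definition of semismoothness. Your argument is precisely the natural unpacking of that remark --- decomposing the residual blockwise and summing the $o(\|h\|)$ contributions --- so there is nothing to add or correct.
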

Once the Newton derivative is obtained, the semismooth Newton method for the nonlinear equation $F(x)=0$ can be written as 
%\begin{equation}\label{semi:smoothnewton:cal:newton:direc}
%\mV (x^l)^{-1}\delta x^{l+1} =  - F (x^l),
%\end{equation}
%where $\mV(x^l) \in \partial F(x^l)$ is the semismooth Newton derivative of $F$ at $x^l$, and $\mV(x)^{-1}$ exist and are uniformly bounded in a small neighborhood of the solution $x^*$ of $F(x^*)=0$. When globalization strategy including line search is necessary, 
%one can get Newton update $x^{l+1}$ with Newton direction $\delta x^{l+1}$ in \eqref{semi:smoothnewton:cal:newton:direc}.
%The semismooth Newton iteration without globalization strategy can also be written as
%follows, by updating $x^{l+1}$ directly, 
\begin{equation}\label{semi:smoothnewton:sys:f}
x^{l+1} = x^{l} -\mV(x^l)^{-1} F(x^l)  \Rightarrow \mV(x^l)x^{l+1} = \mV(x^l)x^l  - F(x^l),
\end{equation}
where $\mV(x^l) \in \partial_{C} F(x^l)$ is the semismooth Newton derivative of $F$ at $x^l$, and $\mV(x)^{-1}$ exist and are uniformly bounded for all $x$ in a small neighborhood of the solution $x^*$ of $F(x^*)=0$.
For the convergence rate of semismooth Newton methods, we have the following proposition.
\begin{proposition}[Superlinear Convergence \cite{KK}] 
	Suppose $x^*$ is a solution to $F(x)=0$ and $F$ is Newton differentiable at $x^*$ with Newton derivative $\mV$. If $\mV$ is nonsingular for all $x \in N(x^*)$ and $\{ \|\mV(x)^{-1}\| : x \in N(x^*)\}$ is bounded ($ N(x^*)$ is a  neighborhood of $x^*$), then the Newton iteration
	\[
	x^{l+1} = x^l-\mV(x^l)^{-1}F(x^l), 
	\]
	converges superlinearly to $x^*$ provided that $|x^0-x^*|$ is sufficiently small.
\end{proposition}

% The following operator matrix  $\mV(x^k)$ is kept only for discussion
% \begin{equation}
% \begin{bmatrix}
% K^*K-\mu\Delta &0&\nabla^* & 0\\
% 0&aI&-I&\mE^*\\
% -\alpha_1 \nabla + \chi_{u^k,w^k}\frac{\langle \nabla u^k-w^k, \nabla \cdot \rangle }{|\nabla u^k-w^k|}p^k& \alpha_1 I + \chi_{u^k,w^k}\frac{\langle \nabla u^k-w^k, -I \cdot \rangle }{|\nabla u^k-w^k|}p^k& \max(\gamma_1, |\nabla u^k -w^k|)&0\\
% 0&-\alpha_0 \mE + \chi_{w^k}\frac{\langle \mE w^k,  \mE \cdot \rangle}{|\mE w^k| }q^k &0&\max(\gamma_0, |\mE w^k|)
% \end{bmatrix}
% \end{equation}
% 

Now we turn to the semismoothness of nonlinear system \eqref{eq:opti:pq}. The only nonlinear or nonsmooth parts come from the function $\Upsilon(w):=\max (1.0, |\mu^k + \sigma_k \mE w|/\alpha_0)$ and $\Pi(u,w):=\max(1.0,{|{\lambda^k}+ {\sigma_k}(\nabla u^l -w^l)|}/{\alpha_1})$.
\begin{lemma}\label{lem:semismooth:max}
	The function $\Upsilon(w): = \max (1.0, \dfrac{|\mu^k + \sigma_k \mE w|}{\alpha_0})$  is semismooth at $w$ and its Clarke's generalized gradient for $w$, i.e.,  $\partial_{C}\Upsilon(w)$ is as follows,% G\^ateaux 
	\begin{equation}\label{eq:newton:deri:max:iso}
	\left\{\chi^s_{w}\dfrac{\sigma_k}{\alpha_0}\dfrac{\langle \mu^k + \sigma_k \mE w,  \mE \cdot \ \rangle }{ |\mu^k + \sigma_k \mE u |} \ | \ s\in[0,1]\right\} = \partial_{C}\Upsilon(w),
	\end{equation}
	where $\chi^s_{w}$ is an element of the Clarke's generalized derivatives of $\max(\cdot, 1.0)$ defined by,
	\begin{equation}\label{eq:newton:deri:max:w}
	\chi^s_{w} := \begin{cases}
	1, \quad & |\mu^k +\sigma_k \mE w | /\alpha_0 >1.0, \\
	s, \quad & |\mu^k +\sigma_k \mE w | /\alpha_0 =1.0, \  s\in [0,1], \\ 
	0, \quad & |\mu^k  + \sigma_k \mE w | / \alpha_0  <1.0.
	\end{cases}
	\end{equation}
	%	Furthermore,  $\mathcal{F}(x)$  in \eqref{eq:opti:pq} is semismooth on $x$. 
\end{lemma}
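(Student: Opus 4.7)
The plan is to exploit the decomposition $\Upsilon = \phi \circ \psi \circ \ell$, where $\ell(w) := \mu^k + \sigma_k \mE w$ is affine, $\psi(y) := |y|/\alpha_0$ is the rescaled Euclidean norm on $W$, and $\phi(t) := \max(1, t)$ is the scalar max function. For the semismoothness assertion, I would note that $\ell$ is smooth, $\psi$ is globally Lipschitz and piecewise smooth (differentiable off the origin), and $\phi$ is piecewise affine; each is therefore semismooth, and semismoothness is preserved under composition of locally Lipschitz semismooth maps, giving semismoothness of $\Upsilon$ on all of $W$.

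For the Clarke generalized gradient, I would proceed by cases on the scalar $t(w) := |\mu^k + \sigma_k \mE w|/\alpha_0$. When $t(w) > 1$ the max is locally active at its second argument and $\mu^k + \sigma_k \mE w$ is bounded away from $0$, so $\Upsilon$ is classically differentiable at $w$ with
\[
\nabla_w \Upsilon(w) = \frac{\sigma_k}{\alpha_0}\,\frac{\langle \mu^k + \sigma_k \mE w,\, \mE\,\cdot\,\rangle}{|\mu^k + \sigma_k \mE w|},
\]
which matches the claimed formula with $\chi_w^s = 1$. When $t(w) < 1$, $\Upsilon$ is locally constant equal to $1$ and $\partial_w \Upsilon(w) = \{0\}$, consistent with $\chi_w^s = 0$. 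On the kink set $t(w) = 1$, $\psi \circ \ell$ is smooth (since $|\mu^k + \sigma_k \mE w| = \alpha_0 > 0$) while $\phi$ is nonsmooth at its argument $1$ with $\partial \phi(1) = [0,1]$, so the formal chain rule already yields the parameterized family in the statement.

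The main delicacy is upgrading the Clarke chain rule to an equality on the kink set $t(w) = 1$, since in general only an inclusion holds. Here the outer function $\phi$ is convex and therefore Clarke regular, and the inner map $\psi \circ \ell$ is continuously differentiable at such points; under these two regularity conditions Clarke's chain rule is known to be exact, delivering precisely the asserted set. A more hands-on alternative that I would keep in reserve is to compute $\partial_w \Upsilon(w)$ directly from the definition as the convex hull of limits of classical gradients along sequences with $t(w_n) \neq 1$: approaching from $t > 1$ yields the full gradient expression above, approaching from $t < 1$ yields $0$, and the convex hull of these two extremes reproduces the parameterization by $s \in [0,1]$, completing the proof without appealing to a general chain-rule theorem.
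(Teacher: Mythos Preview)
Your proposal is correct, and the alternative you sketch at the end is essentially the paper's argument in disguise, but your primary route is genuinely different from the one the paper takes.

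The paper does not use the composition $\phi\circ\psi\circ\ell$ or the Clarke chain rule. Instead it frames $\Upsilon$ as a \emph{continuous selection} of the two smooth pieces $\Upsilon_1(w)\equiv 1$ and $\Upsilon_2(w)=|\mu^k+\sigma_k\mE w|/\alpha_0$, observes that near the kink set $\{|\mu^k+\sigma_k\mE w|=\alpha_0\}$ both pieces are $C^\infty$ (the second because the argument of the norm is bounded away from zero there), concludes that $\Upsilon$ is $PC^\infty$ and hence semismooth, and then invokes Scholtes' formula $\partial_w\Upsilon(w)=\mathrm{co}\{\nabla_w\Upsilon_1(w),\nabla_w\Upsilon_2(w)\}$ for the Clarke subdifferential of a piecewise smooth function in terms of its active selection functions. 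Your chain-rule approach trades the $PC^r$ machinery for the regularity hypothesis in Clarke's chain rule (convex outer map, $C^1$ inner map on the kink set), which is equally clean and arguably more widely known; the paper's selection-function viewpoint, on the other hand, makes the two-branch structure and the convex-hull description completely explicit without any chain-rule bookkeeping. Your reserve argument via limits of classical gradients from either side of the kink is precisely what underlies the Scholtes formula the paper quotes.
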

\begin{proof}
	We will mainly prove that $\Upsilon(w)$ is a $PC^{\infty}$ (piecewise smooth) function  of $w$ \cite{Sch}. 
	It is thus semismooth on $w$ (see \cite{MU}, Proposition 2.26).
	Introduce $\Upsilon_1(w) = 1.0$ and $\Upsilon_2(w) = {|\mu^k + \sigma_k \mE w|}/{\alpha_0}$ which are \emph{selection functions} of $\Upsilon(w)$ and $\Upsilon(w)$ is \emph{continuous selection} of the functions $\Upsilon_1(w)$ and $\Upsilon_2(w)$ \cite{Sch} (Chapter 4) (or Definition 4.5.1 of \cite{FP}). 
	Noting $\Upsilon_1(w)$ is smooth function and $\Upsilon_2(w)$ is smooth in any open set outside the closed set $S_0:=\{w \ | \ |\mu^k + \sigma_k \mE w|=0\}$,  there thus exists a small open neighborhood of $w$ such that $\Upsilon_1(w)$ and $\Upsilon_2(w)$ are smooth functions for any $w\in S_{\alpha_0}:=\{w \ | \ |\lambda^k + \mu_k \mE w|=\alpha_0\}$.   $\Upsilon(w)$ is thus a $PC^{\infty}$ function of $w$ outside $S_0$.  Furthermore, we have
	\[
	\nabla_w \Upsilon_1(w) = 0, \quad  \nabla_w \Upsilon_2(w) = {\sigma_k\langle \mu^k + \sigma_k \mE w,  \mE \cdot \ \rangle }/{ (\alpha_0|\mu^k + \sigma_k \mE  w|)}, \quad \forall w \notin S_0.
	\]
	For any $w \in S_{\alpha_0}$, by \cite{Sch} (Proposition 4.3.1), we thus see
	\[
	\partial_{C}\Upsilon(w) = \text{co}\{\nabla_w\Upsilon_1(w), \nabla_w \Upsilon_2(w)\},
	\]
	where ``$\text{co}$" denotes the convex hull of the corresponding sets.
\end{proof}
Similarly, denoting the Clarke's generalized derivative of $\Pi(u,w)$ on $u$ (or $w$) by $\partial_{C,u}\Pi(u, w)$ (or $\partial_{C,w}\Pi(u, w)$), we have the following lemma. 
\begin{lemma}\label{lem:semismooth:max:uw}
	The function $\Pi(u, w): = \max(1.0, \dfrac{|{\lambda^k}+ {\sigma_k}(\nabla u -w)|}{\alpha_1})$  is semismooth at $u$ or $w$ and the Clarke's generalized gradient for $u$ or $w$ is as follows,% G\^ateaux 
	\begin{align}
	&\left\{\chi^s_{u,w}\dfrac{\sigma_k}{\alpha_1}\dfrac{\langle {\lambda^k}+ {\sigma_k}(\nabla u -w),  \nabla \cdot \ \rangle }{ |{\lambda^k}+ {\sigma_k}(\nabla u -w)|} \ | \ s\in[0,1]\right\} = \partial_{C,u}\Pi(u, w), \label{eq:newton:deri:max:iso:uw:u} \\
	&\left\{\chi^s_{u,w}\dfrac{\sigma_k}{\alpha_1}\dfrac{\langle {\lambda^k}+ {\sigma_k}(\nabla u -w),  -I \cdot \ \rangle }{ |{\lambda^k}+ {\sigma_k}(\nabla u -w)|} \ | \ s\in[0,1]\right\} = \partial_{C,w}\Pi(u, w), \label{eq:newton:deri:max:iso:uw:w}
	\end{align}
	where $\chi^s_{u,w}$ is an element of the Clarke's generalized derivatives of $\max(\cdot, 1.0)$ defined by,
	\begin{equation}\label{eq:newton:deri:max:ww}
	\chi^s_{u,w} = \begin{cases}
	1, \quad & |{\lambda^k}+ {\sigma_k}(\nabla u -w)| /\alpha_1 >1.0, \\
	s, \quad & |{\lambda^k}+ {\sigma_k}(\nabla u -w)| /\alpha_1 =1.0, \  s\in [0,1] \\ 
	0, \quad & |{\lambda^k}+ {\sigma_k}(\nabla u -w)| / \alpha_1  <1.0.
	\end{cases}
	\end{equation}
	%Henceforth, we choose $s=1$ for the notation $\chi_{u,w}$ .
	%Furthermore,  $F(u,h)$  in \eqref{eq:opti:u:lambda} is semismooth on $(u,h)$. 
\end{lemma}

For the nonlinear equation \eqref{eq:opti:pq}, since each  component of $\mF$ is affine function of $p$ or $q$, $\mF$ is semismooth on $p$ or $q$. Together with Lemma \ref{lem:semismooth:max} and \ref{lem:semismooth:max:uw}, we thus conclude that each component of $\mF$ is semismooth on $x$. The semismoothness of $\mF$ on $x$ then follows \cite{MU} (Proposition 2.10). Now let  us turn to the semismooth Newton derivative of $\mF$.  Henceforth, we choose the Newton derivatives of $\Upsilon(w)$ on $w$ and $\Pi(u,w)$ on $u$ or $w$ by choosing $s=1$ in \eqref{eq:newton:deri:max:w}, \eqref{eq:newton:deri:max:iso:uw:u} and \eqref{eq:newton:deri:max:iso:uw:w} with the notations $\chi_{u,w}:=\chi^1_{u,w}$ and 	$\chi_{w}:=\chi^1_{w}$. Since $\nabla_{p}(\Pi(u,w)p) = \Pi(u,w)$, $\nabla_{q}(\Upsilon(w)q) = \Upsilon(w)$, and with Lemma \ref{lem:semismooth:max} and \ref{lem:semismooth:max:uw}, we arrive at
\begin{align}
&\chi_{u,w} \dfrac{\sigma_k}{\alpha_1}\dfrac{\langle {\lambda^k}+ {\sigma_k}(\nabla u -w),  \nabla \cdot \ \rangle }{ |{\lambda^k}+ {\sigma_k}(\nabla u -w)|} p \in \partial_{C,u}(\Pi(u,w)p), \ \ 	
\chi_{w}\dfrac{\sigma_k}{\alpha_0}\dfrac{\langle \mu^k + \sigma_k \mE w,  \mE \cdot \ \rangle }{ |\mu^k + \sigma_k \mE u |} q \in \partial_{C,w}(\Upsilon(w)q), \notag \\
&\chi_{u,w}\dfrac{\sigma_k}{\alpha_1}\dfrac{\langle {\lambda^k}+ {\sigma_k}(\nabla u -w),  -I \cdot \ \rangle }{ |{\lambda^k}+ {\sigma_k}(\nabla u -w)|}p \in \partial_{C,w}(\Pi(u, w)p), \label{eq:clarke:deri:compli}
\end{align}
where $\partial_{C,w}$ denotes the Clark's generalized gradient at $w$. Denote $x^l := (u^l, w^l, q^l, p^l)^{T}$.
With \eqref{eq:clarke:deri:compli}, noting all terms in each component of $\mF$ defined by \eqref{eq:opti:pq} except $\Pi(u,w)p$ and $\Upsilon(w)q$ are affine functions on $x$, the Newton derivative $\mV(x^l) \in \partial_{x}\mF(x)|_{x=x^l}$ can thus be chosen as
\begin{equation}\label{eq:newton:deri:b}
\mV(x^l) = \begin{bmatrix}
A & B \\
C_l&D_l
\end{bmatrix}, \quad \text{where} \quad A  = \begin{bmatrix}
U_{\mu} &0\\
0&W_{\alpha}
\end{bmatrix}, \quad
B = \begin{bmatrix}
\nabla^* & 0 \\
-I & \mE^*
\end{bmatrix},
\end{equation}
with notations 
\[
U_{\mu}: = H =K^*K - \mu \Delta,  \quad W_{a}: = a I, 
\]
and $C_l$, $D_l$ are the following operator matrices 
\begin{equation}\label{eq:AB:original}
D_l= \begin{bmatrix}
\max(1.0, \dfrac{|{\lambda^k}+ {\sigma_k}(\nabla u^l -w^l)|}{\alpha_1})&0 \\
0&\max(1.0, \dfrac{|{\mu^k}+ {\sigma_k}\mE w^l|}{\alpha_0})
\end{bmatrix},
\end{equation}
\begin{equation}\label{eq:C:original}
C_l = \begin{bmatrix}
-\sigma_k \nabla + \chi_{u^l,w^l}\frac{\sigma_k}{\alpha_1}\frac{\langle {\lambda^k}+ {\sigma_k}(\nabla u^l -w^l), \nabla \cdot \rangle }{|{\lambda^k}+ {\sigma_k}(\nabla u^l -w^l)|}p^l& \sigma_k I + \chi_{u^l,w^l}\frac{\sigma_k}{\alpha_1}\frac{\langle \sigma_k( w^l-\nabla u^l)-\lambda^k,  \cdot \rangle }{|{\lambda^k}+ {\sigma_k}(\nabla u^l -w^l)|}p^l\\
0&-\sigma_k \mE + \chi_{w^l}\frac{\sigma_k}{\alpha_0}\frac{\langle {\mu^k}+ {\sigma_k}\mE w^l,  \mE \cdot \rangle}{|{\lambda^k}+ {\sigma_k}\mE w^l| }q^l
\end{bmatrix}.
\end{equation}
%It can be seen that
%\begin{equation}\label{eq:c:decomposition}
%C_l =
%-\sigma_k(I - B_l)B^*, \quad B^* =  \begin{bmatrix}
%\nabla & -I \\
%0 & \mE
%\end{bmatrix},
%\end{equation}
%where 
%\[
%B_l
%=  \begin{bmatrix}
%\chi_{u^l,w^l} \frac{1}{\alpha_1}\frac{\langle {\lambda^k}+ {\sigma_k}(\nabla u^l -w^l), \cdot \rangle }{|{\lambda^k}+ {\sigma_k}(\nabla u^l -w^l)|}p^l & 0 \\
%0& \chi_{w^l}\frac{1}{\alpha_0}\frac{\langle {\mu^k}+ {\sigma_k}\mE w^l,  \cdot \rangle}{|{\mu^k}+ {\sigma_k}\mE w^l| }q^l
%\end{bmatrix}.
%\]
The Newton update becomes
\begin{equation}\label{semi:smoothnewton:sys}
\mV(x^l)x^{l+1} = \mV(x^l)x^l  - \mF(x^l)=[b_1^l, b_2^l]^T,
\end{equation}
where
\begin{align}
&b_1^l = \begin{bmatrix}
f \\0
\end{bmatrix},\quad
b_2^l = \begin{bmatrix}
\lambda^k \\ \mu^k
\end{bmatrix} + 
\begin{bmatrix}
\dfrac{\sigma_k}{\alpha_1}\chi_{u^l,w^l}\dfrac{\langle {\lambda^k}+ {\sigma_k}(\nabla u^l -w^l), \nabla u^l-w^l \rangle }{|{\lambda^k}+ {\sigma_k}(\nabla u^l -w^l)|}p^l\\
\chi_{w^l}\dfrac{\sigma_k}{\alpha_0}\dfrac{\langle {\mu^k}+ {\sigma_k}\mE w^l,  \mE w^l \rangle}{|{\mu^k}+ {\sigma_k}\mE w^l| }q^l
\end{bmatrix}.
\end{align}
However, it is not necessary to solve $x^{l+1}$ in \eqref{semi:smoothnewton:sys} directly . We will employ the Schur complement $\mV(x^l)/D_l$ or $\mV(x^l)/A$ instead. For the  Schur complement $\mV(x^l)/D_l$, by direct calculation, we obtain
 the equation of $(u^{l+1}, w^{l+1})$ 
\begin{equation}\label{eq:tgv:primaluw:eq}
(A - BD_l^{-1}C_l)[u^{l+1}, w^{l+1}]^T =b_1^l- BD_l^{-1}b_2^l. 
\end{equation}
After calculating the update $(u^{l+1}, w^{l+1})$, we can get $(p^{l+1}, q^{l+1})$  through
\begin{equation}\label{eq:update:pd:after:uw}
\begin{bmatrix}
p^{l+1}\\q^{l+1}
\end{bmatrix}
= D_l^{-1}\left(b_2^l - C_l 
\begin{bmatrix}
u^{l+1}\\w^{l+1}
\end{bmatrix}\right).
\end{equation}
%When line search is needed, the Newton update \eqref{semi:smoothnewton:sys} can be replaced by
%\begin{subequations}\label{eq:semi:smoothnewton:sys:delta:uw1}
%	\begin{align}
%	&\mV(x^l)\delta x^{l+1} =  -\mF(x^l), \quad \delta x^{l+1} :=  x^{k+1}-x^k, \quad -\mF(x^l) := [F_1^l, F_2^l], \\
%	&(A - BD_l^{-1}C_l)\delta x_1^{l+1} =F_1^l- BD_l^{-1}F_2^l, \\
%	&\delta x_2^{l+1} = D_l^{-1}(F_2^l - C_l \delta x_1^{l+1}). 
%	\end{align}
%\end{subequations}
%followed by updating $x^{l+1}$ with $\delta x^{l+1}$ and the line search strategy.
Actually, we can also calculate $(p^{l+1}, q^{l+1})$ first through the Schur complement $\mV(x^l)/A$, i.e.,
\begin{equation}\label{eq:ssn:dual}
(D_l - C_lA^{-1}B)(p^{l+1}, q^{l+1})^T = b_2^l - C_lA^{-1}b_1^l.
\end{equation} 
The update of $(u^{l+1}, w^{l+1})$ thus follows 
\begin{equation}\label{eq:updateuw:after:pq}
(u^{l+1}, w^{l+1})^T = A^{-1}(b_1^l - B(p^{l+1}, q^{l+1})^T).
\end{equation} 
%Similarly, once line search is needed, with the same notation in \eqref{eq:semi:smoothnewton:sys:delta:uw1}, we have
%\begin{subequations}\label{eq:semi:smoothnewton:sys:delta:pq1}
%	\begin{align}
%	&(D_l - C_lA^{-1}B)\delta x_2^{l+1} = F_2^l - C_lA^{-1}F_1^l, \\
%	&\delta x_1^{l+1} = A^{-1}(F_1^l - B \delta x_2^{l+1}). 
%	\end{align}
%\end{subequations}

In fact, for the equation \eqref{eq:tgv:primaluw:eq}, we would show that the operator in \eqref{eq:tgv:primaluw:eq} is always positive definite with $(p^l, q^l)$ belonging to $\{p: \norm[\infty]{p} \leq \alpha_1\} \times \{q: \norm[\infty]{q} \leq \alpha_0\}$. Let us denote $x_1:=(u,w)^T \in U\times V$ and $x_2:=(p,q)^T \in V\times W$ with $x=(x_1, x_2)$ as before.

\begin{theorem}\label{thm:posi:primal}
	For the semismooth Newton update \eqref{eq:tgv:primaluw:eq}, the Schur complement $\mV(x^l)/D_l : = (A-BD_{l}^{-1}C_l)$ is positive definite for $l \in \mathbb{N}$ under conditions that $\norm[\infty]{p^l} \leq \alpha_1$ and $\norm[\infty]{q^l} \leq \alpha_0$. Furthermore, the sequence $\{\mV(x^l)/D_l, \ \ l\in \mathbb{N} \}$ is uniformly bounded, since we have
	\begin{equation}
	\langle (A-BD_{l}^{-1}C_l) x_1, x_1\rangle  \geq  \|x_1\|_{A}^2=\|u\|_{H}^2 + \|w\|_{aI}^2, \quad \forall x_1: =(u,w)^T \in U\times V.
	\end{equation}
\end{theorem}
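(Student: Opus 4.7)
The plan is to show that $\langle BD_l^{-1}C_l x_1, x_1 \rangle \leq 0$, since by the block structure of $A$ we already have $\langle A x_1, x_1 \rangle = \|u\|_H^2 + a\|w\|_2^2 = \|x_1\|_A^2$, and the desired inequality reduces to $-\langle BD_l^{-1}C_l x_1, x_1\rangle \ge 0$. Bounded-ness of the Schur complement will then follow from a uniform norm bound on $C_l$ using the pointwise assumptions $|p^l|\le \alpha_1$ and $|q^l|\le \alpha_0$.

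First, I would introduce the shorthand $\nu := \lambda^k+\sigma^k(\nabla u^l-w^l)$ and $\kappa := \mu^k + \sigma^k \mE w^l$, so that $d_1 = \max(1,|\nu|/\alpha_1)$ and $d_2 = \max(1,|\kappa|/\alpha_0)$. Using the expressions \eqref{eq:C:original} for $c_{11},c_{12},c_{22}$, I note the key cancellation
\[
c_{11} u + c_{12} w \;=\; \sigma^k\Bigl[-\delta + \tfrac{\chi_1}{\alpha_1}\tfrac{\langle \nu,\delta\rangle}{|\nu|}p^l\Bigr],\qquad \delta := \nabla u - w,
\]
because the $\nabla$ and $-I$ parts combine into $-\sigma^k\delta$ and the rank-one parts combine via $\langle \nu,\nabla u\rangle - \langle \nu, w\rangle = \langle \nu, \delta\rangle$. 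Similarly $c_{22} w = \sigma^k[-\mE w + \tfrac{\chi_2}{\alpha_0}\tfrac{\langle \kappa,\mE w\rangle}{|\kappa|}q^l]$.

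Next, exploiting the block form of $B$ (with $\nabla^* = -\Div$ and $\mE^* = -\Div$), I compute
\[
\langle BD_l^{-1}C_l x_1, x_1\rangle \;=\; \langle d_1^{-1}(c_{11}u+c_{12}w),\,\delta\rangle + \langle d_2^{-1}c_{22}w,\,\mE w\rangle,
\]
since the mixed $(\nabla u)$ and $(-w)$ contributions combine into pairing with $\delta$. I now show each term is $\le 0$ pointwise. For the first term, pixelwise:
\[
d_1^{-1}(c_{11}u+c_{12}w)\cdot\delta \;=\; \frac{\sigma^k}{d_1}\Bigl[-|\delta|^2 + \tfrac{\chi_1}{\alpha_1|\nu|}(\nu^T\delta)(p^{l,T}\delta)\Bigr].
\]
Split into three cases: (i) $|\nu|<\alpha_1$ forces $\chi_1=0$ and the bracket is $-|\delta|^2 \le 0$; (ii) $|\nu|>\alpha_1$ gives $\chi_1=1$ and $d_1=|\nu|/\alpha_1$, so multiplying by $d_1^{-1}$ and using Cauchy--Schwarz together with $|p^l|\le\alpha_1$ yields $|(\nu^T\delta)(p^{l,T}\delta)|\le \alpha_1|\nu||\delta|^2$, so the bracket is $\le 0$; (iii) $|\nu|=\alpha_1$ with $\chi_1\in[0,1]$ gives the same bound using $|\nu^T\delta|\le \alpha_1|\delta|$ and $|p^{l,T}\delta|\le\alpha_1|\delta|$. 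The argument for $\langle d_2^{-1}c_{22}w,\mE w\rangle$ is identical with $\nu,\delta,p^l,\alpha_1$ replaced by $\kappa,\mE w,q^l,\alpha_0$.

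For the uniform bound, I would observe that $\|D_l^{-1}\|\le 1$, $A$ is fixed, $B$ is fixed and bounded (discrete differentials), and $\|C_l\|$ is bounded independently of $l$ because the only nontrivial coefficient is $\chi_1/|\nu|$ (resp.\ $\chi_2/|\kappa|$), which by construction vanishes whenever $|\nu|<\alpha_1$, and otherwise is at most $1/\alpha_1$ (resp.\ $1/\alpha_0$), while the multipliers $p^l,q^l$ are assumed pointwise bounded by $\alpha_1,\alpha_0$. Combined with the boundedness of $\sigma_k$, this gives a uniform bound on $\|BD_l^{-1}C_l\|$ and hence on $\|A-BD_l^{-1}C_l\|$. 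The main subtlety in the whole argument is recognising the cancellation that turns $c_{11}u+c_{12}w$ into an expression purely in $\delta=\nabla u-w$; once that is in place the remainder is a clean pointwise Cauchy--Schwarz case analysis.
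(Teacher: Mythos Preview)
Your argument is correct and follows essentially the same route as the paper: both proofs reduce to showing $-BD_l^{-1}C_l$ is positive semidefinite by pairing with $B^*x_1=(\nabla u-w,\,\mE w)$, exploiting the cancellation that produces expressions purely in $\delta=\nabla u-w$ and $\mE w$, and then applying pointwise Cauchy--Schwarz together with the feasibility bounds $|p^l|\le\alpha_1$, $|q^l|\le\alpha_0$. Your explicit three-case split on the active/inactive sets is slightly more detailed than the paper's one-line estimate, but the content is identical; your additional paragraph bounding $\|C_l\|$ (hence $\|A-BD_l^{-1}C_l\|$) uniformly is a welcome clarification, since the paper's phrasing ``uniformly bounded, since \ldots $\ge\|x_1\|_A^2$'' is really about uniform coercivity (equivalently, uniform boundedness of the inverses), which is what is actually used downstream.
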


\begin{proof}
	Since $K^*K - \mu \Delta > 0$ and $a >0$, we see $A$ is positive definite. There thus exists a constant $c_0$, such that
	\begin{equation}\label{eq:positive:A}
	A \geq c_0 I.
	\end{equation}
	We would show that the operator $-BD_{l}^{-1}C_l$ is positive semidefinite. 	
	%	It is convenience to consider $u$ and $w$ together first. Let us introduce
	%	\[
	%	F = [\nabla, -I], \quad H = [0, \mE],  \quad x_1^l = (u^l,w^l)^{T}, \ l \in \mathbb{N}.
	%	\]
	%	We thus see
	%	\[
	%	B = [F^*, H^*].
	%	\]
	%	Through direct calculations, compared to \eqref{eq:c:decomposition}, one can verify that
	%%	\begin{equation}\label{eq:C:rewrite}
	%%	-C_l x_1 = \begin{bmatrix}
	%%	\alpha_1 I - \chi_{u^l,w^l} \frac{\langle Fx_1^l, \cdot \rangle }{|Fx_1^l|} p^l &0 \\
	%%	0&\alpha_0 I - \chi_{w^l} \frac{\langle Hx_1^l,  \cdot \rangle }{|Hx_1^l|}q^l
	%%	\end{bmatrix}
	%%	\begin{bmatrix}
	%%	F \\H
	%%	\end{bmatrix}x_1.
	%%	\end{equation}
	%	\begin{equation}\label{eq:cdb:rewrite}
	%	B D_{l}^{-1}(-C_l) = \sigma_k BD_l^{-1}(I-B_l)B^*
	%	\end{equation}
	For the positive semidefiniteness of $-BD_k^{-1} C_{k}$, we just need to prove that for any $x_1$,
	\[
	\langle x_1, -BD_l^{-1} C_{l} x_1 \rangle \geq 0.
	\]
	Indeed, with \eqref{eq:AB:original} and \eqref{eq:C:original}, we have
	%\begin{subequations}\label{eq:estimate:semi}
	\begin{align}
	&\langle x_1, -BD_l^{-1} C_{l} x_1 \rangle =  \bigg \langle \begin{pmatrix}  \nabla u -w\\ \mE w \end{pmatrix}, D_{l}^{-1}
	\begin{pmatrix}
	(\nabla u-w)  - \frac{\chi_{u^l,w^l}}{\alpha_1} \frac{\langle \nabla u^l -w^l, \nabla u -w \rangle }{|\nabla u^l -w^l|}p^l \\
	\mE w - \frac{\chi_{w^l}}{\alpha_0} \frac{\langle \mE w^l, \mE w \rangle}{|\mE w^l|} q^l
	\end{pmatrix}
	\bigg \rangle  \notag \\
	& = \frac{1}{\max(1.0, \frac{|{\lambda^k}+ {\sigma_k}(\nabla u^l -w^l)|}{\alpha_1})} \left[   |\nabla u -w|^2  - \langle \nabla u -w, p^l \rangle \frac{\chi_{u^l,w^l}}{\alpha_1} \frac{\langle \nabla u^l -w^l, \nabla u -w \rangle }{|\nabla u^l -w^l|}      \right] \notag \\
	&\ \ + \frac{1}{\max(1.0, \frac{|{\mu^k}+ {\sigma_k}\mE w^l|}{\alpha_0})} \left[  |\mE w|^2 - \langle \mE w, q^l  \rangle  \frac{\chi_{w^l}}{\alpha_0}\frac{\langle \mE w^l, \mE w \rangle }{|\mE w^l|}  \right].\label{eq:estimate:semi}
	\end{align}
	%	\end{subequations}
	Remembering that during all the semismooth Newton iterations \eqref{eq:tgv:primaluw:eq}, we have the conditions
	\[
	|p^l| \leq \alpha_1, \quad |q^l| \leq \alpha_0.
	\]
	It is straightforward that
	\begin{subequations}\label{eq:estimate:semi:inner}
		\begin{align}
		&\langle \nabla u -w, p^l \rangle \frac{\chi_{u^l,w^l}}{\alpha_1} \frac{\langle \nabla u^l -w^l, \nabla u -w \rangle }{|\nabla u^l -w^l|}
		\leq |\nabla u -w|^2, \\
		&\langle \mE w, q^l \rangle  \frac{\chi_{w^l}}{\alpha_0} \frac{\langle \mE w^l, \mE w \rangle }{|\mE w^l|}  \leq |\mE w|^2.
		\end{align}
	\end{subequations}
	Combining \eqref{eq:estimate:semi} and \eqref{eq:estimate:semi:inner}, we conclude the positive semidefiniteness of $-BD_{l}^{-1}C_l$. Together with \eqref{eq:positive:A}, we get this theorem.	
\end{proof}
\begin{remark}\label{rem:pro:feasible}
	Theorem \ref{thm:posi:primal} tells that if we can keep the constraints  $\norm[\infty]{p^l} \leq \alpha_1$ and $\norm[\infty]{q^l} \leq \alpha_0$ during each Newton iteration, $A-BD_{l}^{-1}C_l$ in \eqref{eq:tgv:primaluw:eq} would have a uniform low bound and is thus well-conditioned for fixed $\sigma$. This certainly can benefit iterative solvers including BiCGSTAB \cite{VAN} for solving \eqref{eq:tgv:primaluw:eq}. The constraints can be satisfied by projections to the corresponding feasible sets. This kind of strategy is inspired by semismooth Newtons directly applied to the TGV model \cite{HPRS} or TV model \cite{HS}, where Tikhonov regularization on dual variables is employed.
\end{remark}

Now, let  us turn to ${\mV}(x^l)^{-1}$ and the Schur complement $\mV(x^l)/A$.  
With \eqref{eq:AB:original}, it can be seen that $D_{l} \geq I$ and $D_{l}^{-1} \leq I$ follows. For the regularity of ${\mV}(x^l)$, since both $D_l^{-1}$ and $(\mV(x^l)/D_l)^{-1}$ exist and are  bounded,  it is known that ${\mV}(x^l)^{-1}$ exits  (see \cite{DUN, GU, HA, HHS}, or \cite{Zhangfu} formula 0.8.1 which is similar to the Banachiewicz inversion formula)
\begin{equation}\label{eq:inverse:newton:deri}
{\mV}(x^l)^{-1} = \begin{bmatrix}
(\mV(x^l)/D_l)^{-1} & -(\mV(x^l)/D_l)^{-1}\nabla^*D_l^{-1} \\
-D_l^{-1}C_l 	(\mV(x^l)/D_l)^{-1} & D_l^{-1} + D_l^{-1}C_l	(\mV(x^l)/D_l)^{-1}\nabla^*D_l^{-1}
\end{bmatrix}.
\end{equation}
Together with the boundedness of $C_l$ and $D_l$, we get the boundedness of ${\mV}(x^l)^{-1}$.

Let us turn to the line system \eqref{eq:ssn:dual} for calculating the dual variables $(p^{l+1}, q^{l+1})$ first. Actually, $(D_l -C_l A^{-1}  B )^{-1}$ exists. By the  Sherman--Morrison--Woodbury formula  \cite{HA, HHS} together with the existence of  $D_l^{-1}$ and $(A-BD_l^{-1} C_l)^{-1}$, we have
\begin{equation}\label{eq:inverse:dual:ssn}
(\mV(x^l)/A)^{-1} = (D_l - C_l A^{-1}B)^{-1} = D_l^{-1} + D_l^{-1}C_l (A-BD_l^{-1} C_l)^{-1}BD_l^{-1}.
\end{equation}
The boundedness of $(D_l - C_l A^{-1}B)^{-1}$ follows by the  boundedness of $(A-BD_l^{-1} C_l)^{-1}$ as shown in Theorem \ref{thm:posi:primal} together  with the boundedness of $D_{l}^{-1}$ and $C_l$. 

However, since we found that  solving \eqref{eq:tgv:primaluw:eq} first is much more efficient than solving \eqref{eq:ssn:dual} first according to our numerical experiments, we will only focus on the approach that solving \eqref{eq:tgv:primaluw:eq} first henceforth.
Let us conclude  this section by the following primal-dual semismooth Newton based ALM \eqref{eq:tgv_primal} by giving Algorithm \ref{alm:SSN_PDP} together with projections to the feasible sets of $p$ and $q$ by Remark \ref{rem:pro:feasible}. Although globalization strategies including the Armijo line search are usually needed for the global convergence of Newton methods, however, the \textbf{SSNPDP} in Algorithm \ref{alm:SSN_PDP}  also shares some global convergence numerically, which is also observed  for the corresponding problems in \cite{HS}.
\begin{algorithm}[h]
	\caption{ALM with Primal-dual semismooth Newton with solving primal variables first (\textbf{ALM-PDP})
		\label{alm:SSN_PDP}}
	\begin{algorithmic}
		\STATE {\textbf{ALM}: Given noisy image $f$,  multipliers $\lambda^0$ and $\mu^0$, step size $\sigma_0$ of ALM, iterate the following steps for $k=0, 1,\cdots, $ unless some stopping criterion with the primal-dual form \eqref{eq:tgv-denoising-saddle:original} is satisfied. Do the following \textbf{SSNPDP} for each inner iteration of \textbf{ALM}}: 
		\STATE{\quad \quad \textbf{SSNPDP}: Given initial values $(u^0,w^0)$ and $p^0 \in\{p:\norm[\infty]{p} \leq \alpha_1\}$ and $q^0 \in \{q:\norm[\infty]{q} \leq \alpha_0\}$,  Iterate the following steps: \textbf{Step 1, Step 2, Step 3} for $l=0, 1, \cdots, $ unless some stopping criterion associated with the nonlinear system \eqref{eq:opti:pq} is satisfied. Here $(u^0, w^0, p^0, q^0)$ is usually chosen as $(u^k, w^k, p^k,q^k)$ from the last ALM iteration. }
		%\Statex {\textbf{Output}: Optimal parameters $\lambda_1$ and $\lambda_2$.}
		%	\STATE {\textbf{Initialise}: $p=0$}
		%	\WHILE{Unless some stopping criterion is satisfied}
		%	\STATE{Compute the right hand side of the linear system \eqref{eq:newton:equation:iso} for isotropic (or \eqref{} for anisotropic case):}
		%	\STATE{$\qquad b=(x^{2})^{d}f-\frac{1}{\epsilon_{0}}\big (f_{0}(p)+g_{0}(p) \big )-\frac{1}{\epsilon_{1}}\big(d^{\ast}f_{1}(x p)+x^{\ast}g_{1}(x p) \big)$}
		\STATE {\quad \quad \textbf{SSNPDP: Step 1: }Solve the linear system \eqref{eq:tgv:primaluw:eq} for $(u^{l+1},w^{l+1})$ with some stopping criterion with iterative method (BiCGSTAB):}
		%\STATE {$\qquad\left (\epsilon \Delta^{2}+(x^{2})^{\ast}x^{2}+\frac{1}{\epsilon_{0}} \big (Df_{0}(p)+ Dg_{0}(p)  \big ) +\frac{1}{\epsilon_{1}} \big ( D\hat{f}_{1}(p)+D\hat{g}_{1}(p\big) \right )d=b$}
		\STATE{\quad \quad \textbf{SSNPDP: Step 2: }Update $(p^{l+1},q^{l+1})$ by \eqref{eq:update:pd:after:uw}}. 
		\STATE{\quad \quad \textbf{SSNPDP: Step 3: }Project $(p^{l+1},q^{l+1})$ to the feasible set  $\{p:\norm[\infty]{p} \leq \alpha_1\}$ and $\{q:\norm[\infty]{q} \leq \alpha_0\}$, i.e., $p^{l+1}  =\mathcal{P}_{\alpha_1}(p^{l+1})$ and $q^{l+1}  =\mathcal{P}_{\alpha_0}(q^{l+1})$ as the initial values for the next Newton iteration}.
		\STATE{\quad \quad \textbf{SSNPDP: Output} $(u^{k+1}, w^{k+1}, p^{k+1}, q^{k+1})$ by the last $(u^{l+1}, w^{l+1}, p^{l+1}, q^{l+1})$.}
		\STATE{ \textbf{ALM:} Update the Lagrangian multipliers: $\lambda^{k+1} = p^{k+1}$ and
			$\mu^{k+1} = q^{k+1}$ and the step sizes $\sigma_{k+1} = c_0\sigma_{k}$ with $c_0>1$}.
		%		\IF{$\|F(p)\|_{D}\le \mathrm{tol}$}
		%		\STATE {$\qquad$ Decrease $\epsilon_{0}$ and $\epsilon_{1}$ using an appropriate strategy}
		%		\ENDIF      
		%	\ENDWHILE
		%\STATE{Recover denoised image $u$ as}
	\end{algorithmic}
\end{algorithm} 

Now, let  us focus on the convergence of  the semismooth Newton solvers \textbf{SSNPDP}  in Algorithm \ref{alm:SSN_PDP}. Although the non-singularity of the corresponding Newton derivative is guaranteed by Theorem \ref{thm:posi:primal} and \eqref{eq:inverse:newton:deri}  through projections to the feasible sets as in  \textbf{SSNPDP}, however, the convergence becomes a subtle issue because the projections to the feasible sets have changed the original Newton derivative. 
%\begin{equation}\label{semi:smoothnewton:cal:newton:direc}
%\mV (x^l)^{-1}\delta x^{l+1} =  - F (x^l),
%\end{equation}
%where $\mV(x^l) \in \partial F(x^l)$ is the semismooth Newton derivative of $F$ at $x^l$, and $\mV(x)^{-1}$ exist and are uniformly bounded in a small neighborhood of the solution $x^*$ of $F(x^*)=0$. With globalization strategy when necessary including line search, 
%one can get Newton update $x^{l+1}$ with Newton direction $\delta x^{l+1}$ in \eqref{semi:smoothnewton:cal:newton:direc}.
% Once the globalization strategy is not needed, the semismooth Newton iteration can also be written as
% follows with updating $x^{l+1}$ directly
%\begin{equation}\label{semi:smoothnewton:sys}
%\mV(x^l)x^{l+1} = \mV(x^l)x^l  - F(x^l).
%\end{equation}
Fortunately, similar to Theorem 3.6 of \cite{HS},  we have the following proposition for the convergence of \textbf{SSNPDP}.
\begin{proposition}\label{prop:near}
For the $(k+1)$-th update in Algorithm \ref{alm:SSN_PDP}  with fixed $\lambda^k$, $\mu^k$ and $\sigma_k$, letting $x_{*}^{k+1}:=(u_*^{k+1}, w_*^{k+1}, p_*^{k+1}, q_*^{k+1})$ be the solution of \eqref{eq:opti:pq},  then the iterates $x^l: = (u^l, w^l, p^l, q^l)$ produced by \textbf{SSNPDP}  in Algorithm \ref{alm:SSN_PDP}  converge superlinearly to $x_{*}^{k+1}$ provided that $(u^0, w^0, p^0, q^0)$ is sufficiently close to $x_{*}^{k+1}$.
\end{proposition}
\begin{proof}
    The proof is completely similar to the proof of Theorem 3.6 in \cite{HS}. Here we give a sketch of the proof. Denote $\mathcal{V}_{+}^l$ as the perturbed Newton derivative with the original $p^l$ and $q^l$ in $\mathcal{V}(x^l)$ of \eqref{eq:newton:deri:b}  replaced by $p_{+}^{l}: = \mP_{\alpha_1}(p^l)$ and $q_{+}^{l}: = \mP_{\alpha_0}(q^l)$  correspondingly.  Since the solution $(p_*^{k+1}, q_*^{k+1})$  of \eqref{eq:opti:pq} is feasible and satisfies the constraints, we thus can get the boundedness of $V(x_*^{k+1})^{-1}$ as in \eqref{eq:inverse:newton:deri}. Noting that $\mathcal{V}(x^l)$, especially $C_l$ and $D_l$ are continuously depending on  $(u^l, w^l, p^l, q^l)$ for fixed $\lambda^k$, $\mu^k$ and $\sigma_k$, we conclude that
     for each $\Delta >0$ there exists $\rho>0$ and $x^l$ is in a small $\rho$-ball around $x_*^{k+1}$ as assumed such that 
    \[
    \|\mathcal{V}(x_*^{k+1})- \mathcal{V}_{+}^l \| \leq \Delta.
    \]
    The boundedness $\mathcal{V}_{+}^l$ also follows which means that there exists $C>0$ such that $\| \mathcal{V}_{+}^l   \| \leq C$.
    Now with Theorem 4.1 of \cite{SH}, we conclude that $x^l$ converges to $x_*^{k+1}$ linearly. Furthermore, with this convergence, we can get that $\mathcal{V}_{+}^l$ converges to $\mathcal{V}(x_*^{k+1})$. Finally, with Theorem 4.2 of \cite{SH}, we obtain the superliner convergence of $x^l$ locally.  
\end{proof}
The condition of Proposition \ref{prop:near} can be satisfied if each $x^{k}$ obtained from the previous ALM iteration gives an initial value that is sufficiently close to the solution $x_*^{k+1}$ of \eqref{eq:inverse:newton:deri}. It is known that ALM is essentially the proximal point method applying to the dual problem \cite{Roc1, Roc2}.   
The convergence and the corresponding rate of augmented Lagrangian method are thus closely related to the convergence of the proximal point algorithm. Especially, the local linear convergence of the multipliers or primal and dual variables is mainly determined by the metric subregularities of the corresponding monotone operators \cite{Roc1,Roc2, LE, LU}. Now, let  us turn to the stopping criterion of ALM which is important for its convergence.   With notation $\Phi_k(u,w,h_1,h_2): =L_{\sigma_k}(u,w,h_1,h_2;\lambda^k,\mu^k)$, $h:=(h_1,h_2)$, and  $x_1 = (u,w)$ as before, we follow the stopping criterion for the inexact augmented Lagrangian method which is originated from \cite{Roc1, Roc2} and employed in \cite{LST, ZZST, ZST}
\begin{align}
&\Phi_k(x_1^{k+1}, h^{k+1}) - \inf \Phi_k(x_1,h) \leq \epsilon_k^2/2\sigma_k, \quad \sum_{k=0}^{\infty}\epsilon_k < \infty, \label{stop:a}        \tag{A} \\
& \Phi_k(x_1^{k+1}, h^{k+1}) - \inf \Phi_k(x_1,h) \label{stop:b1}   \leq \frac{\delta_k^2}{2\sigma_k}(\|\lambda^{k+1}-\lambda^k\|^2+ \|\mu^{k+1}-\mu^k\|^2), 
\ \  \sum_{k=0}^{\infty}\delta_k < +\infty, \tag{B1} 
%&\text{dist}(0, \partial \Phi_k(x_1^{k+1},  h^{k+1})) \leq \frac{\delta_k'}{\sigma_k}(\|\lambda^{k+1} - \lambda^k\| + \|\mu^{k+1} - \mu^k\|), \quad 0 \leq \delta_k' \rightarrow 0, \label{stop:b2}\tag{B2}
\end{align}
where here and in what follows the distance $x$ from the set $C$ is defined by
\[
\text{dist}(x,C): = \inf\{\|x-x'\|\ | \ x' \in C\}.
\]
\section{Convergence of the Augmented Lagrangian Method}\label{sec:conver:alm}
%Since it can be seen that if 
%we can get linear convergence of both the primal variable $u^k$ and the dual variable $\lambda^k$. For isotropic case, we can get the linear convergence of the dual variable $\lambda^k$.

In this section, we will investigate the global convergence and local convergence rate of the proposed ALM for the problem \eqref{eq:tgv_primal}. We will touch on some necessary tools from convex analysis. Let us introduce some basic definitions and properties of multivalued mappings from convex analysis \cite{DR, LST}. Let $F: X \rightrightarrows  Y $ be a multivalued  mapping. The graph of $F$ is defined as the set
\[
\text{gph} F: = \{ (x,y) \in X\times Y| y\in F(x)\}.
\]
The inverse of $F$, i.e., $F^{-1}:  Y \rightrightarrows  X$ is defined as the multivalued mapping whose graph is $\{(y,x)| (x,y) \in \text{gph} F\}$. Let us introduce the metric subregularity and calmness for  multivalued mappings \cite{DR, LST}, which is important for analyzing the convergence rate and global convergence of ALM.
%Let us introduce the error bound and metrical subregularity for $F$ \cite{DR, LST}.
%\begin{definition}[Error Bound \cite{LST, Rob}]\label{def:errorbound}
%	Suppose $y \in Y$ and $F^{-1}(y) \neq \emptyset$.  A mapping $F: X \Longrightarrow Y$ is said to satisfy the error bound condition for the point $y$ with modulus $\kappa \geq 0$ if there exists $\epsilon >0$ such that if $x \in X$ with $\text{dist}(y, F(x)) \leq \epsilon$, then
%	\begin{equation}\label{eq:error:bound}
%		\text{dist}(x, F^{-1}(y)) \leq \kappa \text{dist}(y, F(x)).
%	\end{equation}
%\end{definition}
\begin{definition}[Metric Subregularity \cite{DR}]\label{def:metricregular}
	A mapping $F: X \Longrightarrow Y$ is called metrically subregular at $\bar x$ for $\bar y $ if $(\bar x, \bar y) \in \text{gph} F$ and there exists modulus $\kappa \geq 0$  along with a neighborhoods $U$ of $\bar x$ and $V$ of $\bar y$ such that
	\begin{equation}\label{eq:metricregular}
	\text{dist}(x, F^{-1}(\bar y)) \leq \kappa  \text{dist}(\bar y, F(x) \cap V ) \quad \text{for all} \ \ x \in U.
	\end{equation}
\end{definition}
\begin{definition}[Calmness \cite{DR}]A mapping $S: \mathbb{R}^m \rightrightarrows \mathbb{R}^n$ is called calm at $\bar y$ for $\bar x$ if $(\bar y, \bar x) \in \text{gph} \ S$, and there is a constant $\kappa \geq 0$ along with  neighborhoods $U$ of $\bar x$ and $V $ of  $\bar y$ such that 
	\begin{equation}\label{calmness:def}
	S(y) \cap U \subset S(\bar y) + \kappa |y-\bar y| \mathbb{B}, \quad \forall y \in V.
	\end{equation}
	In \eqref{calmness:def}, $\mathbb{B}$ denotes the closed unit ball in $\mathbb{R}^n$.
\end{definition}
%Actually, with the definition \ref{def:errorbound} and \ref{def:metricregular}, one can see that if $F$ satisfies the error bound condition \eqref{eq:error:bound} for $\bar y$ with modulus $\kappa$, then it is also metrically subregular at $\bar x$ for $\bar y$ with the same modulus for any $\bar x\in F^{-1}(\bar y)$.  
For the relation between the metric subregularity and the calmness, by \cite{DR} (Theorem 3H.3),  $S$ is called calm at $\bar y$ for $\bar x$ if and only if $S^{-1}: \mathbb{R}^n \rightrightarrows \mathbb{R}^m$ is metrically subregular at $\bar x$ for $\bar y$.
Let us now turn to the finite-dimensional space setting in detail. Let us vectorize the images along with other variables for convenience with discrete operators
\begin{align*}
& \nabla_x \in \mathbb{R}^{m\times n}: \mathbb{R}^n \rightarrow \mathbb{R}^m, \ \  \nabla_y\in \mathbb{R}^{m\times n}: \mathbb{R}^n \rightarrow \mathbb{R}^m, \ \ u=(u^1, \cdots, u^n)^{T} \in \mathbb{R}^n, \\
&p=(p_1,\cdots, p_m)^T \in \mathbb{R}^{2m}, \quad p_i = (p_i^1, p_i^2)^T \in \mathbb{R}^2, \\
& \lambda=(\lambda_1,\cdots, \lambda_m)^T \in \mathbb{R}^{2m}, \quad \lambda_i = (\lambda_i^1, \lambda_{i}^2)^T \in \mathbb{R}^2, \\
&q=(q_1,\cdots, q_m)^T \in \mathbb{R}^{3m}, \quad q_i = (q_i^1, q_i^2, q_i^3)^T \in \mathbb{R}^3, \\
& \mu=(\mu_1,\cdots, \mu_m)^T \in \mathbb{R}^{3m}, \quad \mu_i = (\mu_i^1, \mu_i^2, \mu_i^3,)^T \in \mathbb{R}^3,
\end{align*}
%
%We notice the $L^1$ norms $\|\cdot\|_{1}$ in \eqref{eq:norms:pq} 
%are not polyhedral functions, which are much more challenging for analysis.

%The $ \|p\|_{\infty} $ and $\|q\|_{\infty} $  are defined as
%\begin{align*}
%  \|p\|_{\infty}& = \sup_{i}\{ \|p_i\|_1 | \  i=1, \cdots, m\}, \\
%  \|q\|_{\infty}& = \sup_{i}\{ \|q_i\|_1 | \  i=1, \cdots, m\}.
%\end{align*}

%Introduce the Lagrangian function
%\begin{equation}
%	l(x,y;\lambda,\mu) = \frac{1}{2}\langle Qx, x \rangle_{2} - \langle x, b \rangle   + \frac{1}{2}\|z\|_{2}^2 + \alpha_1 \|p\|_{1} + \alpha_0 \|q\|_{1}+ \langle \nabla u-w-p, \lambda \rangle + \langle \mE w-q, \mu \rangle ,
%\end{equation}
%where $x = (u,w)^{T}$, $y=(p,q)^T$ and
%$Q$ is a linear, positive semidefinite operator and $b$ is known. For \eqref{eq:tgv_primal}, we have
%\[
%Q = \begin{pmatrix}
%H & 0 \\
%0 &aI
%\end{pmatrix}, \quad 
%b = \begin{pmatrix}
%R^*f \\0
%\end{pmatrix}, \quad z = \begin{pmatrix}
%f \\0
%\end{pmatrix}.
%\]
%It is well-known that $l$ is a convex-concave function on $(x,y;\lambda,\mu)$. Define the maximal monotone operator $T_{l}$ by 
%\begin{equation}
%	T_{l}(x,y;\lambda,\mu) =\{(x',y';\lambda',\mu')|(x',y';-\lambda',-\mu')\in \partial l(x,y;\lambda,\mu)\},
%\end{equation}z
%and the corresponding inverse is given by
%\begin{equation}
%	T_{l}^{-1}(x',y';\lambda',\mu') =\{(x,y;\lambda,\mu)|(x',y';-\lambda',-\mu')\in \partial l(x,y;\lambda,\mu)\}.
%\end{equation}

Let us now turn to the metric subregularity of $\partial \mathfrak{D}$ for the dual problem \eqref{eq:dual:tgv}. Suppose $(\partial \mathfrak{D})^{-1}(0) \neq \emptyset$ and there exists $(\bar \lambda, \bar \mu)$ such that $0 \in \partial \mathfrak{D}(\bar \lambda, \bar \mu)$. Let us introduce
\begin{equation}\label{eq:subgradient:dual}
%{\color{red}K	(\partial \mathfrak{D})(\lambda) }= K^*L^{-1}K( (\lambda,\mu)^T + b) + \partial g(\lambda) + \partial \psi(\mu), \quad 
g(\lambda): = \mI_{\{\norm[\infty]{\lambda} \leq \alpha_1\}}(\lambda),  \ \  \psi(\mu): = \mI_{\{\norm[\infty]{\mu} \leq \alpha_0\}}(\mu).
\end{equation}

The metric subregularity of $\partial \mathfrak{D}$ is very subtle, since the constraint set
\begin{align*}
g(\lambda) = 0 &\Leftrightarrow \left\{\lambda=(\lambda_1, \cdots, \lambda_m)^{T} \ | \ \lambda_i \in \mathbb{R}^{2}, \ |\lambda_i|= \sqrt{ (\lambda_i^1)^2 + (\lambda_i^2)^2} \leq \alpha_1, \ i =1,\cdots, m\right\},\\
\psi(\mu) = 0 &\Leftrightarrow \left\{\mu=(\mu_1,\mu_2, \cdots, \mu_m)^{T} \ | \ \mu_i \in \mathbb{R}^{3}, |\mu_i|= \sqrt{ (\mu_i^1)^2 + (\mu_i^2)^2 + 2 (\mu_i^3)^2} \leq \alpha_0, \ i =1, 2, \cdots, m \right\},
\end{align*}
%\[
%\mI_{\{\norm[\infty]{\lambda} \leq \alpha\}}(\lambda) = 0 \Longleftrightarrow \{\lambda =(\lambda_1,\mu)^T\in \mathbb{R}^{2m} \ | \ \sqrt{{\lambda_1^i}^2 + {\mu^i}^2} \leq \alpha, \quad i =1, 2, \cdots, m\} 
%\]
are not polyhedral sets with $\lambda_i=(\lambda_i^1,\lambda_i^2)^T$ and $\mu_i = (\mu_i^1, \mu_i^2, \mu_i^3)^T$. Introduce
\[
g_i(\lambda_i) = \mI_{\{ |\lambda_i|\leq \alpha_1\}}(\lambda_i), \quad  \psi_i(\mu_i) = \mI_{\{ |\mu_i|\leq \alpha_0\}}(\mu_i),\quad i=1,2,\cdots, m.
\]
%Henceforth, 
%Let us denote $\mathbb{B}_{a,\lambda}( \lambda_i)$ (or  $\mathbb{B}_{a}(\mu_i)$) as the Euclidean closed ball centered at $ \lambda_i \in \mathbb{R}^2$ (or $ \mu_i \in \mathbb{R}^4$) with radius $a$. 
Denote $\mI_{\mathbb{B}_{\alpha_1,\Lambda}^i(0)}(x)$ and $\mI_{\mathbb{B}_{\alpha_0,M}^i(0)}(x)$ as the indicator functions for the following $l_2$ ball constraints corresponding to $\lambda_i$ and $\mu_i$, $ i = 1,\cdots, m$ 
	\begin{align}
&	\mathbb{B}_{\alpha_1,\Lambda}^i(0)=\mathbb{B}_{\alpha_1,\Lambda}(0) :=  \left\{ \tilde \lambda:  = (\tilde\lambda^1,\tilde\lambda^2)^{T} \in \mathbb{R}^2 \ | \ |\tilde \lambda| = \sqrt{ (\tilde\lambda^1)^2 + (\tilde\lambda^2)^2} \leq \alpha_1\right\}, \label{eq:l2:close:balls} \\
&	\mathbb{B}_{\alpha_0,M}^i(0)=\mathbb{B}_{\alpha_0,M}(0) :=  \left\{\tilde \mu:= (\tilde \mu^1, \tilde \mu^2,  \tilde \mu^3)^{T} \in \mathbb{R}^3 \ | \ |\tilde \mu| = \sqrt{(\tilde \mu^1)^2 + (\tilde \mu^2)^2+  2 (\tilde \mu^3)^2} \leq \alpha_0\right\}. \notag
	\end{align}
Henceforth, we also use the notations $\mathbb{B}_{a,\Lambda}(\tilde \lambda_0)$ denoting the $l_2$ closed ball with the center $\tilde \lambda_0 \in \mathbb{R}^2$ and radius $a>0$ and $\mathbb{B}_{b,M}(\tilde \mu_0)$ denoting the $l_2$ closed ball with the center $\tilde \mu_0 \in \mathbb{R}^3$ and radius $b>0$ with the same Euclidean distance $|\cdot|$ as in \eqref{eq:l2:close:balls}.

Furthermore, denote $ \mathbb{B}_{a, \Lambda}( \lambda) = \Pi_{i=1}^m \mathbb{B}_{a,\Lambda}(  \lambda_i)$ with $ \lambda = (\lambda_1, \cdots, \lambda_m)^T$ and  $ \mathbb{B}_{a,M}(  \mu) = \Pi_{i=1}^m \mathbb{B}_{a,M}(  \mu_i)$ with $ \mu = ( \mu_1, \cdots, \mu_m)^T$. 
We can thus write
\[
\partial g = \Pi_{i=1}^m\partial g_{i} = \Pi_{i=1}^m\partial  \mI_{\mathbb{B}_{\alpha_1,\Lambda}(0)}(\lambda_i), \quad \partial \psi = \Pi_{i=1}^m\partial \psi_{i} = \Pi_{i=1}^m\partial  \mI_{\mathbb{B}_{\alpha_0,M}(0)}(\mu_i).
\]
It is known that each $\partial  \mI_{\mathbb{B}_{\alpha_1, \Lambda}(0)}(\lambda_i)$ (or $\partial  \mI_{\mathbb{B}_{\alpha_0,M}(0)}(\mu_i)$) is metrically subregular at $(\bar \lambda_i, \bar v_i) \in \text{gph} \partial  \mI_{\mathbb{B}_{\alpha_1,\Lambda}(0)} $ (or $(\bar \mu_i, \bar o_i) \in \text{gph} \partial  \mI_{\mathbb{B}_{\alpha_0,M}(0)}$ ) \cite{YY} (which can also be obtained from \cite{ZM}). For the metric subregularity of $\partial g$, we have the following lemma. 
\begin{lemma}\label{lem:metric:subregular:g}
	For any $(\bar \lambda, \bar v)^T \in \emph{gph} \ \partial g$, $\partial g$ is metrically  subregular at $\bar \lambda$ for $\bar v$.
\end{lemma}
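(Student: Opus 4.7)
The plan is to exploit the separable product structure of $g = \sum_{i=1}^m g_i$ (equivalently, $g = \prod_{i=1}^m g_i$ as an indicator on a product of balls) and reduce metric subregularity of $\partial g$ at $(\bar\lambda,\bar v)$ to metric subregularity of the individual componentwise subdifferentials $\partial g_i$ at $(\bar\lambda_i,\bar v_i)$, which is the already cited result from \cite{YY,ZM}. The key observation is that a product of metrically subregular mappings is metrically subregular, with modulus equal to the maximum of the componentwise moduli.

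First, I would record the factorization
\begin{equation*}
\partial g(\lambda) = \prod_{i=1}^m \partial g_i(\lambda_i), \qquad (\partial g)^{-1}(\bar v) = \prod_{i=1}^m (\partial g_i)^{-1}(\bar v_i),
\end{equation*}
which follows from the separability of the indicator of the product of balls $\mathbb{B}_{\alpha_1,\Lambda}(0)^m$. Next, since each $(\bar\lambda_i,\bar v_i) \in \text{gph}\,\partial g_i$, I invoke the cited componentwise metric subregularity: there exist moduli $\kappa_i \geq 0$ and neighborhoods $U_i$ of $\bar\lambda_i$ and $V_i$ of $\bar v_i$ such that
\begin{equation*}
\text{dist}\bigl(\lambda_i, (\partial g_i)^{-1}(\bar v_i)\bigr) \leq \kappa_i \,\text{dist}\bigl(\bar v_i, \partial g_i(\lambda_i) \cap V_i\bigr), \qquad \forall \lambda_i \in U_i.
\end{equation*}

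Then I would set $U := \prod_{i=1}^m U_i$, $V := \prod_{i=1}^m V_i$ and $\kappa := \max_i \kappa_i$. The Euclidean distance on the product space decomposes as a square sum over the components, and so does the graph of $\partial g$. Therefore, for any $\lambda = (\lambda_1,\ldots,\lambda_m) \in U$,
\begin{equation*}
\text{dist}\bigl(\lambda, (\partial g)^{-1}(\bar v)\bigr)^2 = \sum_{i=1}^m \text{dist}\bigl(\lambda_i, (\partial g_i)^{-1}(\bar v_i)\bigr)^2 \leq \kappa^2 \sum_{i=1}^m \text{dist}\bigl(\bar v_i, \partial g_i(\lambda_i)\cap V_i\bigr)^2,
\end{equation*}
and the right-hand sum equals $\kappa^2\,\text{dist}(\bar v, \partial g(\lambda)\cap V)^2$ by the product structure of $V$ and of $\partial g$. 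Taking square roots yields \eqref{eq:metricregular} with modulus $\kappa$, which is the desired conclusion.

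The main step to be careful about is the product decomposition of the distance to the intersection $\partial g(\lambda) \cap V$; this works precisely because $V$ is chosen as a product neighborhood so that the projection onto $\partial g(\lambda)\cap V$ separates into componentwise projections onto $\partial g_i(\lambda_i)\cap V_i$. Everything else is a bookkeeping argument exploiting that the balls $\mathbb{B}_{\alpha_1,\Lambda}^i(0)$ are identical copies and that there are only finitely many of them, so taking the maximum of the $\kappa_i$ is harmless. I do not expect to need the calm intersection theorem for this particular lemma — that heavier machinery will be needed later, when $p$ and $q$ are coupled through the dual KKT system, but not for the separable indicator $g$ on its own.
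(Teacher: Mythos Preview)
Your proposal is correct and follows essentially the same approach as the paper: exploit the separable product structure of $\partial g$, apply the cited componentwise metric subregularity of each $\partial g_i$, square and sum the distances, and take $\kappa = \max_i \kappa_i$. If anything, you are slightly more careful than the paper about choosing product neighborhoods $U = \prod_i U_i$ and $V = \prod_i V_i$ so that the distance to $\partial g(\lambda)\cap V$ decomposes componentwise.
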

\begin{proof}
	For any $(\bar \lambda, \bar v)^T \in \text{gph} \ \partial g$, and $V$ of a neignborhoods of $\bar \lambda$, since
	\begin{align*}
	&\text{dist}^2(\lambda, (\partial g)^{-1}(\bar v) ) = \sum_{i=1}^m 	\text{dist}^2(\lambda_i, (\partial g_i)^{-1}(\bar v_i) ) \\
	& \leq \sum_{i=1}^m \kappa_i^2 \text{dist}^2(\bar v_i, (\partial g_i)(\bar \lambda_i))	
	\leq \sum_{i=1}^m \max(\kappa_i^2, i=1,\cdots, m) \text{dist}^2(\bar v_i, (\partial g_i)(\bar \lambda_i))	\\
	&  = \max(\kappa_i^2, i=1,\cdots, m) \text{dist}^2(\bar v, (\partial g)(\bar \lambda)).
	\end{align*}
	Thus with choice $\kappa = \sqrt{ \max_{i=1}^m(\kappa_i^2, i=1,\cdots, m) }$, we found that  $\partial g$ is metrically  subregular at $\bar \lambda$ for $\bar v$ with modulus $\kappa$.
\end{proof}
Completely similar, we can obtain metric subregularity of $\partial \psi$. 
\begin{lemma}\label{lem:metric:subregular:j}
	For any $(\bar \mu, \bar o)^T \in \emph{gph} \ \partial \psi$, $\partial \psi$ is metrically  subregular at $\bar \mu$ for $\bar o$.
\end{lemma}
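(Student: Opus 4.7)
The plan is to mirror the proof of Lemma \ref{lem:metric:subregular:g} almost verbatim, since the only structural ingredient used there was the product (separable) structure of the indicator $g=\sum_i g_i$ together with componentwise metric subregularity. Here $\psi$ has exactly the same structure: $\psi(\mu) = \sum_{i=1}^m \psi_i(\mu_i)$ with $\psi_i = \mI_{\mathbb{B}_{\alpha_0,M}(0)}$, only with $\mathbb{R}^3$-valued blocks $\mu_i$ under the weighted Euclidean norm rather than the planar $\mathbb{R}^2$ blocks $\lambda_i$. Hence $\partial\psi$ splits as the Cartesian product $\prod_{i=1}^m \partial\psi_i$, and correspondingly $(\partial\psi)^{-1}(\bar o) = \prod_{i=1}^m (\partial\psi_i)^{-1}(\bar o_i)$ for $\bar o=(\bar o_1,\dots,\bar o_m)$.

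First I would exploit this product structure to split both sides of the target inequality coordinatewise, using that the Euclidean distance to a product set equals the $\ell^2$-sum of distances to its factors:
\[
\mathrm{dist}^2(\mu,(\partial\psi)^{-1}(\bar o)) \;=\; \sum_{i=1}^m \mathrm{dist}^2(\mu_i,(\partial\psi_i)^{-1}(\bar o_i)), \qquad \mathrm{dist}^2(\bar o,\partial\psi(\mu))\;=\;\sum_{i=1}^m \mathrm{dist}^2(\bar o_i,\partial\psi_i(\mu_i)).
\]
Next I would invoke the pointwise metric subregularity of each $\partial\psi_i=\partial \mI_{\mathbb{B}_{\alpha_0,M}(0)}$ at $\bar\mu_i$ for $\bar o_i$, which is the fact cited just before Lemma \ref{lem:metric:subregular:g} from \cite{YY} (and obtainable from \cite{ZM}); this yields moduli $\kappa_i\ge 0$ and neighborhoods $U_i$ of $\bar\mu_i$, $V_i$ of $\bar o_i$ such that the componentwise estimate $\mathrm{dist}(\mu_i,(\partial\psi_i)^{-1}(\bar o_i)) \le \kappa_i\,\mathrm{dist}(\bar o_i,\partial\psi_i(\mu_i))$ holds on $U_i$.

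Finally I would combine these componentwise bounds by taking $\kappa := \sqrt{\max_{1\le i\le m} \kappa_i^{2}}$, $U:=\prod_i U_i$, $V:=\prod_i V_i$, and apply the same max-over-$i$ trick as in the preceding proof to obtain
\[
\mathrm{dist}^2(\mu,(\partial\psi)^{-1}(\bar o)) \;\le\; \kappa^2 \sum_{i=1}^m \mathrm{dist}^2(\bar o_i,\partial\psi_i(\mu_i))\;=\; \kappa^2\,\mathrm{dist}^2(\bar o,\partial\psi(\mu)),
\]
which is precisely the definition of metric subregularity of $\partial\psi$ at $\bar\mu$ for $\bar o$.

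There is no real obstacle here: the argument is entirely parallel to Lemma \ref{lem:metric:subregular:g}. The only point that warrants a brief sanity check is the weighted inner product carried by the $W$-coordinates (the factor $2(\mu_i^3)^2$ appearing in $|\mu_i|$ from \eqref{eq:l2:close:balls}); but since this only amounts to a fixed linear change of variables on each $\mathbb{R}^3$ block, it preserves the metric subregularity of the ball indicator with an absorbed constant, and the max-of-moduli step remains valid. Consequently the statement follows, and one may in fact state the proof as ``identical to that of Lemma~\ref{lem:metric:subregular:g}, replacing $g_i,\lambda_i,\mathbb{B}_{\alpha_1,\Lambda}$ by $\psi_i,\mu_i,\mathbb{B}_{\alpha_0,M}$ throughout.''
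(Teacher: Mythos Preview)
Your proposal is correct and matches the paper's approach: the paper itself does not give a separate proof, merely introducing the lemma with ``Completely similar, we have'' after Lemma~\ref{lem:metric:subregular:g}, i.e., it intends exactly the componentwise product argument you wrote out.
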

%Now, for the general structure convex optimization problem
%\[
%\min_{u \in X}\frac{\langle Qu, u \rangle }{2} -\langle b, u \rangle + \alpha \|\nabla u\|_{1}, 
%\]
%where $Q$ is a positive definite operator and the primall-dual form is
%\begin{equation}\label{eq:pd:general:optimality:str}
%\min_{u \in X} \max_{\lambda \in Y}\frac{\langle Qu, u \rangle }{2} -\langle b, u \rangle + \langle \nabla u, \lambda \rangle -    \mI_{\{\norm[\infty]{\lambda} \leq \alpha\}}(\lambda),
%\end{equation}
%and the dual form is
%\[
%\min_{\lambda \in Y} \frac{\langle \tilde Q \lambda, \lambda \rangle }{2} -\langle \tilde b, \lambda \rangle +   \mI_{\{\norm[\infty]{\lambda} \leq \alpha\}}(\lambda).
%\]
%By the optimality conditions of \eqref{eq:pd:general:optimality:str}, we have
%\[
%\tilde Q\bar u = \Div \bar\lambda + b, \nabla \bar u \in   \mI_{\{\norm[\infty]{\lambda} \leq \alpha\}}(\bar \lambda).
%\]

Now we turn to a more general model compared to \eqref{eq:dual:tgv}.
Suppose $h:  \mathbb{R}^{m} \times \mathbb{R}^{2m} \rightarrow  \mathbb{R}$,
%$\lambda = (\lambda_1,\lambda_2, \cdots, \lambda_{2m})^{T}$ and $J$ is a partition of the index set $(1, 2, \cdots, 2m)$,
\begin{equation}
h(v_1, v_2) := \frac{1}{2}\|v_1\|_{H^{-1}}^2-\frac{1}{2} \|f_0\|_2^2 + \frac{1}{2a}\|v_2\|_{2}^2.
\end{equation}
Introduce  $\mathbb{A} \in \mathbb{R}^{5m\times 3m}: \mathbb{R}^{5m} $ $\rightarrow \mathbb{R}^{3m}$, $\xi := (	\lambda, \mu)^T$, and $\gamma(\xi) :=  g(\lambda)+ \psi(\mu)$. Let us consider the following more general dual problem
\begin{equation}\label{eq:dual:ROF:general}
\max_{\xi \in U\times V} - \mathfrak{D}(\xi):= -\left(h(\mathbb{A}\xi-b) +\gamma(\xi)\right). 
\end{equation}
With $\mathbb{A} := B$ in \eqref{eq:newton:deri:b} and $b = (K^*f_0, 0)^{T}$, we can recover the original dual problem \eqref{eq:dual:tgv} by \eqref{eq:dual:ROF:general}.
%For any $\{\norm[\infty]{\lambda} \leq \alpha_1\}$ and $\{\norm[\infty]{\mu} \leq \alpha_0\}$, we can write them as 
%\begin{equation}
%\lambda = [\lambda_1, \lambda_2, \cdots, \lambda_m]^T, \quad \lambda_i \in \mathbb{B}_{\alpha_1,\Lambda}(0), \quad
%\mu = [\mu_1, \mu_2, \cdots, \mu_m]^T, \quad \mu_i \in \mathbb{B}_{\alpha_1,M}(0).
%\end{equation}
%Actually, we see $g(\lambda) = \Pi_{i=1}^m\mI_{\mathbb{B}_{\alpha,\Lambda}(0)}(\lambda_{i})$ and $\psi(\mu) = \Pi_{i=1}^m\mI_{\mathbb{B}_{\alpha,M}(0)}(\mu_{i})$. 
Since $g$ and $\psi$ are separated functions on different variables $\lambda$ and $\mu$, by simple calculation together with Lemma \ref{lem:metric:subregular:g} and \ref{lem:metric:subregular:j}, we see $\partial \gamma$ is metrically subregular at $\bar \xi: = (\bar \lambda, \bar \mu)$ for $(\bar v, \bar o)$. However, the metric subregularity of $\partial  \mathfrak{D}$
at $\bar \xi$ is a subtle issue. Fortunately, we can use the Calm intersection theorem \cite{KKU, KKU1} (also see the following Proposition \ref{prop:calm:inter}) to overcome this difficulty. We refer to \cite{AA} for the case of locally strong convex functions.

Noting $\partial \gamma(\xi) = (\partial g(\lambda), \partial \psi(\mu))^T$, let us introduce the following notations for preparations
%\begin{equation}\label{eq:notations:metric}
\begin{equation}\label{eq:eta:y}
\mathbb{A}\xi=\bar y, \quad 
%\partial g(\lambda) \\ \partial \psi(\mu)
%\bar y: =  \begin{bmatrix}
%\bar y_{\lambda} \\ \bar y_{\mu}
%\end{bmatrix}, \quad \bar \eta := \begin{bmatrix}
%\bar g \\ \bar \psi
%\end{bmatrix}, \quad \partial \gamma(\xi) := \begin{bmatrix}
%\partial g(\lambda) \\ \partial \psi(\mu)
%\end{bmatrix},\quad p^1 :=  \begin{bmatrix}
%p_{\lambda}^1 \\ 	p_{\mu}^1
%\end{bmatrix}, \quad p^2: = \begin{bmatrix}
%p_{\lambda}^2 \\ 	p_{\mu}^2
%\end{bmatrix},
\bar \eta := \mathbb{A}^T (\nabla_{ y} h (y -b))|_{y=\bar y} =(\bar \eta_1, \bar \eta_2, \cdots, \bar \eta_m)^{T},  \quad \bar \eta_i \in \mathbb{R}^3. %  \bar \eta_i \in \mN_{\mathbb{B}_{\alpha}^i(0)}(\bar \lambda_i)
\end{equation}
With the notations in \eqref{eq:eta:y}, let  us introduce the following constraint sets with $p^1$ and $p^2$ to be determined
\begin{align}
&\mathcal{X}: = \{ \xi \ | \ \mathbb{A}\xi=\bar y, \quad -\bar \eta \in \partial \gamma(\xi)\}, \\
&\Gamma_1(p^1) := \{ \xi \ | \ \mathbb{A} \xi - \bar \eta = p^1  \}, \quad 
\Gamma_2(p^2): = \{ \xi \ | \ p_2 \in \bar \eta + \partial \gamma(\xi) \}, \\
&\hat \Gamma(p^1) :=  \Gamma_1(p^1)\cap\Gamma_2(0) = \{ \xi\ | \  p^1 = \mathbb{A}\xi - \bar y, \ 0 \in \bar \eta + \partial \gamma(\xi) \}, \label{eq:hat:gamma}
\end{align}
where $\mathcal{X}$ is actually the solution set of \eqref{eq:dual:ROF:general} by \eqref{eq:eta:y}.
We also need another two set valued mapping,
\begin{align}
&\Gamma(p^1, p^2):=\{\xi \ |\ p^1 = \mathbb{A} \xi -\bar y, \quad p^2 \in \bar \eta + \partial \gamma(\xi) \},\\
&S(p):=\{\xi \  | \ p \in \nabla_{\xi} (h(\mathbb{A}\xi-b))+ \partial \gamma(\xi) \} \Rightarrow \mathcal{X}=S(0).
\end{align}
Actually the metric subregularity of $\partial \mathfrak{D}$ at $(\bar \xi, 0)$ is equivalent to the calmness $S$ at $(0, \bar \xi)$ \cite{DR} (Theorem 3H.3). Now we turn to the calmness of $S$. Furthermore, since our solution set $\mathcal{X}$ is compact by the constraints of $\lambda$ and $\mu$ in \eqref{eq:dual:ROF:general}, by By \cite{ZM} (Proposition 4) (or Proposition 7 in \cite{YY}  for more general cases), we have the following proposition.
\begin{proposition}
	The calmness of $S$ at $(0, \bar \xi)$ is equivalent to the calmness of $\Gamma$ at $(0,0, \bar \xi)$ for any $\bar \xi \in S(0)$.
\end{proposition}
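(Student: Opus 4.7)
The proposition is a direct instance of Proposition 4 of \cite{ZM} (see also Proposition 7 of \cite{YY}), and my plan is to verify that our setup matches its hypotheses and then invoke the result. The key structural observation is that since $h$ is quadratic, its gradient $\nabla h$ is a bounded linear operator (with matrix representation $M$, block-diagonal with blocks $H^{-1}$ and $I/a$). At any $\bar\xi \in S(0)$ we have $\mathbb{A}\bar\xi = \bar y$ and $\bar\eta = \mathbb{A}^T \nabla h(\bar y - b)$, hence for every $\xi$,
\begin{equation*}
\mathbb{A}^T \nabla h(\mathbb{A}\xi - b) - \bar\eta = \mathbb{A}^T M(\mathbb{A}\xi - \bar y).
\end{equation*}
This identity is the bridge between the defining inclusions of $S$ and $\Gamma$: $\xi \in S(p)$ if and only if $\xi \in \Gamma(p^1, p^2)$ with $p^1 = \mathbb{A}\xi - \bar y$ and $p^2 = p - \mathbb{A}^T M p^1$.

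For the easier direction ($S$ calm implies $\Gamma$ calm), I would argue directly. Given $\xi \in \Gamma(p^1, p^2)$ near $\bar\xi$, setting $p := \mathbb{A}^T M p^1 + p^2$ yields $\xi \in S(p)$. Since $\Gamma(0,0) = S(0) = \mathcal{X}$, calmness of $S$ gives
\begin{equation*}
\text{dist}(\xi, \Gamma(0,0)) \leq \kappa_S \|p\| \leq \kappa_S\bigl(\|\mathbb{A}\|\|M\|\,\|p^1\| + \|p^2\|\bigr),
\end{equation*}
which is the calmness of $\Gamma$ at $(0, 0, \bar\xi)$.

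For the converse ($\Gamma$ calm implies $S$ calm), given $\xi \in S(p)$ near $\bar\xi$, set $p^1 := \mathbb{A}\xi - \bar y$ and $p^2 := p - \mathbb{A}^T M p^1$, so that $\xi \in \Gamma(p^1, p^2)$. Using continuity at $\bar\xi$ (where $p^1 \to 0$ as $\xi \to \bar\xi$), both $p^1$ and $p^2$ lie in the requisite neighborhood, and calmness of $\Gamma$ provides $\text{dist}(\xi, \mathcal{X}) \leq \kappa_\Gamma(\|p^1\| + \|p^2\|)$. Since $\mathbb{A}\xi^* = \bar y$ for every $\xi^* \in \mathcal{X}$, we have $\|p^1\| \leq \|\mathbb{A}\|\,\text{dist}(\xi, \mathcal{X})$; substituting this back and rearranging on a sufficiently small neighborhood of $\bar\xi$ (exploiting the compactness of $\mathcal{X}$, which follows from the box constraints embedded in $\gamma$) yields $\text{dist}(\xi, S(0)) \leq \kappa'\|p\|$.

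The principal obstacle is the converse direction, specifically the rearrangement step, because the auxiliary parameter $p^1$ depends on $\xi$ rather than on $p$ alone; one must absorb a term proportional to $\text{dist}(\xi, \mathcal{X})$ back into the same quantity, and this absorption is only delicate because $\kappa_\Gamma$ is fixed. This is precisely the difficulty addressed by the cited propositions, which treat the equivalence of calmness between a composite generalized equation and its decoupled form under exactly the structural hypotheses verified above (smooth $h$ with Lipschitz gradient, linear coupling through $\mathbb{A}$, compact solution set $\mathcal{X}$). The plan is therefore to conclude by invoking \cite{ZM} (Proposition 4) or \cite{YY} (Proposition 7) rather than reproving the absorption in detail.
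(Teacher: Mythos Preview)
Your proposal is correct and follows essentially the same approach as the paper: the paper simply observes that $\mathcal{X}$ is compact (by the box constraints on $\lambda,\mu$) and invokes \cite{ZM} (Proposition 4) or \cite{YY} (Proposition 7) without further argument. You in fact go further than the paper by verifying the structural hypotheses explicitly and sketching both directions, correctly isolating the absorption step in the converse as the place where the cited results do the real work.
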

We would use the following calm intersection theorem to prove the calmness of $\Gamma$. 
\begin{proposition}[Calm intersection theorem \cite{KKU, KKU1}]\label{prop:calm:inter}
	Let $T_1: \mathbb{R}^{q_1} \rightrightarrows \mathbb{R}^k$, $T_2: \mathbb{R}^{q_2} \rightrightarrows \mathbb{R}^k$ be two set-valued maps. Define set-valued maps
	\begin{align}
	T(p^1, p^2): &= T_1(p^1)\cap T_2(p^2), \\
	\hat T(p^1):&= T_1(p^1)\cap T_2(0).
	\end{align}
	Let $\tilde x \in T(0,0)$. Suppose that both set-valued maps $T_1$ and $T_2$ are calm at $(0, \tilde x)$ and $T_1^{-1}$ is pseudo-Lipschitiz at $(0,\tilde x)$. Then $T$ is calm at $(0,0, \tilde x)$ if and only if $\hat T$ is calm at $(0, \tilde x)$.
\end{proposition}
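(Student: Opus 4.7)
The plan is to prove the equivalence by handling each implication separately; only the reverse direction requires real work. For the forward implication, observe that $\hat T(p^1) = T_1(p^1) \cap T_2(0) = T(p^1, 0)$. If $T$ is calm at $(0, 0, \tilde x)$ with modulus $\kappa$ on a neighborhood $U$ of $\tilde x$ and a neighborhood $V \subset \mathbb{R}^{q_1} \times \mathbb{R}^{q_2}$ of $(0,0)$, then restricting the calmness inclusion to the slice $p^2 = 0$ gives $\hat T(p^1) \cap U \subseteq T(0, 0) + \kappa |p^1| \mathbb{B} = \hat T(0) + \kappa |p^1| \mathbb{B}$ for all $p^1$ with $(p^1, 0) \in V$, which is precisely the calmness of $\hat T$ at $(0, \tilde x)$ with the same modulus.

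For the reverse direction, denote the calmness moduli of $T_2$ and $\hat T$ by $\kappa_2$ and $\kappa_3$, and the pseudo-Lipschitz modulus of $T_1^{-1}$ by $L$. I would pick $x \in T(p^1, p^2) \cap U$ for a suitably small neighborhood $U$ of $\tilde x$ and $(p^1, p^2)$ close to $(0, 0)$, and construct $x_* \in T(0, 0)$ with $|x - x_*| \lesssim |p^1| + |p^2|$ in three steps. First, since $x \in T_2(p^2) \cap U$, calmness of $T_2$ furnishes $x_0 \in T_2(0)$ with $|x - x_0| \leq \kappa_2 |p^2|$. Second, since $x \in T_1(p^1)$, equivalently $p^1 \in T_1^{-1}(x)$, the pseudo-Lipschitz property of $T_1^{-1}$ applied between the nearby points $x$ and $x_0$ produces $\tilde p^1 \in T_1^{-1}(x_0)$ (equivalently $x_0 \in T_1(\tilde p^1)$) with $|\tilde p^1 - p^1| \leq L |x - x_0| \leq L \kappa_2 |p^2|$, whence $x_0 \in T_1(\tilde p^1) \cap T_2(0) = \hat T(\tilde p^1)$. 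Third, calmness of $\hat T$ at $(0, \tilde x)$ yields $x_* \in \hat T(0) = T(0, 0)$ with $|x_0 - x_*| \leq \kappa_3 |\tilde p^1| \leq \kappa_3 (|p^1| + L \kappa_2 |p^2|)$. The triangle inequality then gives $|x - x_*| \leq \kappa_3 |p^1| + (\kappa_2 + L \kappa_2 \kappa_3)|p^2|$, which is the desired linear estimate and hence the calmness of $T$ at $(0, 0, \tilde x)$.

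The main technical obstacle I anticipate is the bookkeeping of neighborhoods across these three local estimates: the auxiliary point $x_0$ must remain in the neighborhoods where both the pseudo-Lipschitz property of $T_1^{-1}$ and the calmness of $\hat T$ are valid, and the perturbed parameter $\tilde p^1$ must lie in the neighborhood of $0$ required by the pseudo-Lipschitz estimate. These can all be arranged by first taking $U$ and the ambient neighborhoods small enough to be contained in the intersections of those coming from the three hypotheses, and then shrinking the initial ball around $(p^1, p^2) = (0, 0)$ so that $\kappa_2 |p^2|$ and $L \kappa_2 |p^2|$ are small enough to keep the constructed points inside those neighborhoods. This is a routine continuity argument, but indispensable for the chain of set-valued inclusions to close up.
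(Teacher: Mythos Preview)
The paper does not prove this proposition; it is quoted from Klatte--Kummer \cite{KKU, KKU1} and invoked as a black box in the proof of Theorem~\ref{thm:metric:regular:dual:iso}. There is therefore no paper proof to compare your proposal against.

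That said, your argument is correct and is essentially the standard one. The forward direction is an immediate restriction to the slice $p^2=0$. For the reverse direction your three-step chain---calmness of $T_2$ to produce $x_0\in T_2(0)$, the Aubin property of $T_1^{-1}$ to produce $\tilde p^1$ with $x_0\in T_1(\tilde p^1)$, and calmness of $\hat T$ to reach $T(0,0)$---is exactly the right mechanism, and the resulting modulus $\max\{\kappa_3,\ \kappa_2+L\kappa_2\kappa_3\}$ is the expected one. The neighborhood bookkeeping you flag (keeping $x_0$ inside the domains of validity for both the Aubin estimate and the calmness of $\hat T$, and keeping $\tilde p^1$ near $0$) is routine but indeed indispensable; your plan to shrink $U$ and the parameter ball first is the correct order of quantifiers. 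One minor observation: your argument never uses the separate hypothesis that $T_1$ is calm, nor should it---calmness of $T_1$ is already implied by the pseudo-Lipschitz property of $T_1^{-1}$ (Aubin property being equivalent to metric regularity, which dominates metric subregularity). This redundancy is present in the paper's statement as well.
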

%Actually, the polyhedral mapping $\Gamma_1$ and $\Gamma_1^{-1}$ are metrically subregular \cite{Rob} and $\Gamma_1$ is thus calm \cite{DR} (Theorem 3H.3). 
Furthermore, we have the following proposition by \cite{Hof} (or \cite{YY}, Lemma 3).
\begin{proposition}\label{pro:gamma1}
	$\Gamma_1$ is calm  at $(0,\bar \xi)$ and  $\Gamma_1^{-1}$ is pseudo-Lipschitiz at $(0,\bar \xi)$.
\end{proposition}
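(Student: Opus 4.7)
The plan is to observe that $\Gamma_1$ is a purely affine (polyhedral) multifunction, and to reduce both claims to classical facts about affine systems. Reading the definition (and reconciling it with $\hat\Gamma$ so that $\mathbb{A}\xi-\bar y = p^1$), the graph of $\Gamma_1$ is
\[
\operatorname{gph}\Gamma_1 \;=\; \{(p^1,\xi)\ |\ \mathbb{A}\xi-\bar y = p^1\},
\]
which is an affine subspace, hence in particular a polyhedral convex set. This is exactly the setting for the Hoffman error bound cited as \cite{Hof}.

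First, I would establish the calmness of $\Gamma_1$ at $(0,\bar\xi)$. Since $\bar\xi\in\Gamma_1(0)$, Hoffman's lemma provides a constant $\kappa_1>0$, depending only on $\mathbb{A}$, such that for every $\xi\in\mathbb{R}^{5m}$ with $\Gamma_1(p^1)\neq\emptyset$,
\[
\operatorname{dist}(\xi,\Gamma_1(0)) \;\le\; \kappa_1\,\|\mathbb{A}\xi-\bar y\| \;=\; \kappa_1\,\|p^1\|
\quad\text{whenever } \xi\in\Gamma_1(p^1).
\]
Taking any neighborhood $U$ of $\bar\xi$ and $V$ of $0$, this inequality instantly yields the inclusion
$\Gamma_1(p^1)\cap U\subset\Gamma_1(0)+\kappa_1\|p^1\|\,\mathbb{B}$ required by Definition \ref{calmness:def}; in fact the estimate is global, not merely local.

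Second, I would verify the pseudo-Lipschitz (Aubin) property of $\Gamma_1^{-1}$ at $(0,\bar\xi)$. Directly from the definition,
\[
\Gamma_1^{-1}(\xi) \;=\; \{p^1\ |\ \mathbb{A}\xi-\bar y = p^1\} \;=\; \{\mathbb{A}\xi-\bar y\},
\]
so $\Gamma_1^{-1}$ is single-valued and affine, and therefore globally Lipschitz with constant $\|\mathbb{A}\|$. Global Lipschitz continuity of a single-valued map trivially implies the Aubin/pseudo-Lipschitz property at every point of its graph, and in particular at $(0,\bar\xi)$.

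Because the whole argument reduces to Hoffman's theorem on one side and an explicit formula for $\Gamma_1^{-1}$ on the other, there is no genuine obstacle; the only point requiring care is harmonizing the two forms of the defining equation for $\Gamma_1$ that appear in the statement (the $\mathbb{A}\xi-\bar\eta=p^1$ version versus the $\mathbb{A}\xi-\bar y=p^1$ version used inside $\hat\Gamma$), so that the calmness constant $\kappa_1$ is associated with the correct affine system. Once this identification is made, the proposition follows in two short lines.
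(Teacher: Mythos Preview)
Your proposal is correct and matches the paper's approach: the paper does not supply a proof of this proposition at all but simply attributes it to Hoffman's error bound \cite{Hof} (alternatively \cite{YY}, Lemma~3), and your argument is precisely the natural unpacking of that citation. Your explicit observation that $\Gamma_1^{-1}(\xi)=\{\mathbb{A}\xi-\bar y\}$ is single-valued affine, hence globally Lipschitz and therefore pseudo-Lipschitz, is a clean way to handle the second claim; and you are right to flag the $\bar\eta$ versus $\bar y$ discrepancy in the two displayed definitions of $\Gamma_1$, which is a typo in the paper (the form $\mathbb{A}\xi-\bar y=p^1$ is the intended one, consistent with $\Gamma$ and $\hat\Gamma$).
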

We need the following assumption first, which is actually a mild condition by the optimality conditions in \eqref{eq:optimalites:tgv:pd}. 
\begin{assumption}\label{asump:existence}
	Let us assume that $(\bar  \lambda, \bar \mu) \in \mathcal{X}$ and
	\begin{itemize}
		\item[\emph{i.}] For each $\bar \lambda_i$, either  $\bar \lambda_{i}  \in \emph{int} \mathbb{B}_{\alpha,\Lambda}(0)$ or $\bar \lambda_{i}  \in \emph{bd} \mathbb{B}_{\alpha_1,\Lambda}(0)$ and there exists $\bar g_i \neq 0  $ such that  $\bar g_i   \in \mN_{\mathbb{B}_{\alpha_1, \Lambda}(0)}(\bar \lambda_i)$.
		\item[\emph{ii.}] For each $\bar \mu_i$, either $\bar \mu_{i}  \in \emph{int} \mathbb{B}_{\alpha_0, M}(0)$ or $\bar \mu_{i}  \in \emph{bd} \mathbb{B}_{\alpha_0, M}(0)$ and there exists $\bar \psi_i \neq 0  $ such that  $\bar \psi_i   \in \mN_{\mathbb{B}_{\alpha_0, M}(0)}(\bar \mu_i)$.
	\end{itemize}	
\end{assumption}
%Our proof of the metric subregularity of $\partial f$ is based on the following Calm intersection theorem \cite{KK1, KKU, YY}.
With these preparations, inspired by \cite{YY}, we have the following theorem for the metric subregularity of $\partial \mathfrak{D}$.
\begin{theorem}\label{thm:metric:regular:dual:iso}
	For the problem \eqref{eq:dual:ROF:general},  supposing the dual problem has at least one solution $(\bar \lambda, \bar \mu)$ satisfying the Assumption \ref{asump:existence}, then $\partial \mathfrak{D}$ is metrically subregular at $(\bar \lambda,\bar \mu)$ for the origin. 
\end{theorem}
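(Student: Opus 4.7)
The plan is to chain together the equivalences developed above and conclude via the calm intersection theorem. First I would invoke Theorem 3H.3 of \cite{DR}, which turns the claimed metric subregularity of $\partial \mathfrak{D}$ at $(\bar\xi,0)$ into the calmness of the inverse mapping $S = (\partial\mathfrak{D})^{-1}$ at $(0,\bar\xi)$. Since the solution set $\mathcal{X}=S(0)$ is compact (being contained in the product of the bounded $\ell_2$ balls $\mathbb{B}_{\alpha_1,\Lambda}(0)^m\times\mathbb{B}_{\alpha_0,M}(0)^m$), the proposition cited just before Proposition \ref{prop:calm:inter} reduces this further to the calmness of $\Gamma$ at $(0,0,\bar\xi)$.

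Next I would verify the hypotheses of the Calm Intersection Theorem (Proposition \ref{prop:calm:inter}) for $T_1=\Gamma_1$ and $T_2=\Gamma_2$. Proposition \ref{pro:gamma1} already supplies the calmness of $\Gamma_1$ at $(0,\bar\xi)$ together with the pseudo-Lipschitz property of $\Gamma_1^{-1}$. For the calmness of $\Gamma_2$ at $(0,\bar\xi)$, the defining inclusion $p^2\in\bar\eta+\partial\gamma(\xi)$ rewrites $\Gamma_2(p^2)$ as $(\partial\gamma)^{-1}(p^2-\bar\eta)$, so by Theorem 3H.3 of \cite{DR} the desired calmness is equivalent to metric subregularity of $\partial\gamma$ at $\bar\xi$ for $-\bar\eta$; since $\partial\gamma$ is the Cartesian product of $\partial g$ and $\partial\psi$, this last property is inherited componentwise from Lemmas \ref{lem:metric:subregular:g} and \ref{lem:metric:subregular:j}.

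The hard part, and the only place where Assumption \ref{asump:existence} really enters, will be the calmness of $\hat\Gamma$ at $(0,\bar\xi)$. The key observation I would exploit is that, under Assumption \ref{asump:existence}, the level set $\Gamma_2(0)=\{\xi\mid -\bar\eta\in\partial\gamma(\xi)\}$ is \emph{locally} an affine subset near $\bar\xi$. Split indices $i=1,\dots,m$ into an inactive part $I$ (where $\bar\lambda_i\in\mathrm{int}\,\mathbb{B}_{\alpha_1,\Lambda}(0)$, which forces the corresponding component of $\bar\eta$ to vanish) and an active part $A$ (where $\bar\lambda_i\in\mathrm{bd}\,\mathbb{B}_{\alpha_1,\Lambda}(0)$ with nonzero normal). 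On the active part the inclusion $-\bar\eta_i\in\mN_{\mathbb{B}_{\alpha_1,\Lambda}(0)}(\lambda_i)=\{t\lambda_i\mid t\geq 0\}$ combined with $|\lambda_i|=\alpha_1$ and $\bar\eta_i\neq 0$ pins $\lambda_i$ uniquely to the point $\alpha_1(-\bar\eta_i)/|\bar\eta_i|=\bar\lambda_i$ on the sphere; on the inactive part the inclusion holds automatically for all $\lambda_i$ in a small neighborhood of $\bar\lambda_i$. Assumption \ref{asump:existence}\,ii.\ yields the analogous dichotomy for the $\mu_i$ components. This is exactly the strict complementarity condition needed to avoid the non-polyhedrality of the $\ell_2$-ball constraints.

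Consequently, in a sufficiently small neighborhood $U$ of $\bar\xi$, the intersection $\hat\Gamma(p^1)=\Gamma_1(p^1)\cap\Gamma_2(0)$ is the solution set of a finite system of affine equations whose right-hand side depends linearly on $p^1$: the active coordinates are fixed at $(\bar\lambda_A,\bar\mu_A)$ and the remaining coordinates must satisfy $\mathbb{A}\xi=\bar y+p^1$. Hoffman's lemma \cite{Hof} then delivers an estimate $\mathrm{dist}(\xi,\hat\Gamma(0))\leq\kappa\|p^1\|$ for all $\xi\in\hat\Gamma(p^1)\cap U$, i.e., $\hat\Gamma$ is calm at $(0,\bar\xi)$. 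With every hypothesis of Proposition \ref{prop:calm:inter} verified, $\Gamma$ is calm at $(0,0,\bar\xi)$, hence $S$ is calm at $(0,\bar\xi)$, and Theorem 3H.3 once more converts this into the asserted metric subregularity of $\partial\mathfrak{D}$ at $(\bar\lambda,\bar\mu)$ for the origin.
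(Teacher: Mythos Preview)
Your proposal is correct and follows essentially the same route as the paper: reduce metric subregularity of $\partial\mathfrak{D}$ to calmness of $\Gamma$ via the calm intersection theorem, verify the side hypotheses for $\Gamma_1$ and $\Gamma_2$ exactly as you do, and use Assumption~\ref{asump:existence} to identify $\Gamma_2(0)$ locally as a product of singletons (active indices) and full neighborhoods (inactive indices), then invoke Hoffman's lemma on the resulting affine system. The one place the paper is slightly more careful is the last step: Hoffman's bound applies to the \emph{global} polyhedral set $M(0)=\{\xi\mid \mathbb{A}\xi=\bar y,\ \xi_A=\bar\xi_A\}$, whereas the identification $M(0)=\hat\Gamma(0)$ holds only near $\bar\xi$; the paper closes this by observing that the metric projection of any $\xi\in\hat\Gamma(p^1)\cap\mathbb{B}_\epsilon(\bar\xi)$ onto $M(0)$ stays within $\epsilon$ of $\xi$ (since $\bar\xi\in M(0)$) and hence still lies in $\Gamma_2(0)$, so that $\mathrm{dist}(\xi,\hat\Gamma(0))\leq\mathrm{dist}(\xi,M(0))$---you should make this projection step explicit.
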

\begin{proof}
	We mainly need to prove the calmness of $\hat\Gamma(p^1)$ in \eqref{eq:hat:gamma} at $(0,\bar \xi)$. Let us first give the outline of the proof. By metric subregularity of $\partial g$, $\partial \psi$ by Lemma \ref{lem:metric:subregular:g}, \ref{lem:metric:subregular:j}, the fact that $\Gamma_1^{-1}$ is pseudo-Lipschitiz, and the calmness of $\Gamma_1$  at $(0,\bar \xi)$ by Proposition \ref{pro:gamma1},  we get calmness of $\Gamma$ at $(0,0, \bar \xi)$ with the Calm intersection theorem in Proposition \ref{prop:calm:inter}. We thus get the calmness of $S$ at $(0, \bar \xi)$ and the metric subregular of $\partial \mathfrak{D}$ at $\bar \xi$ for the origin. Now let  us go to the details and focus on the the calmness of $\hat\Gamma(p^1)$ at $(0,\bar \xi)$.
	Without loss of generality and according to Assumption \ref{asump:existence}, suppose
	\begin{align*}
	& \bar \lambda_{i} \in \text{int}  \mathbb{B}_{\alpha_1,\Lambda}(0), \quad i = 1, \cdots, L_1,\\
	& \bar \lambda_{i} \in \text{bd}  \mathbb{B}_{\alpha_1, \Lambda}(0), \quad  -\bar g_i  \neq 0 \in \mN_{\mathbb{B}_{\alpha_1, \Lambda}(0)}(\bar \lambda_i), \quad i = L_1+1, \cdots, m, \quad 1<L_1<m;\\
	& \bar \mu_{i} \in \text{int}  \mathbb{B}_{\alpha_0, M}(0), \quad i = 1, \cdots, L_2, \\
	& \bar \mu_{i} \in \text{bd}  \mathbb{B}_{\alpha_0, M}(0), \quad  -\bar \psi_i  \neq 0 \in \mN_{\mathbb{B}_{\alpha_0, M}(0)}(\bar \mu_i), \quad i = L_2+1, \cdots, m, \quad 1<L_2<m.
	\end{align*}
	For $i=1,\cdots, L_1$, $\bar \lambda_{i} \in \text{int} \mathbb{B}_{\alpha_1,\Lambda}^i(0)$, we have $-\bar g_i \in \mN_{\mathbb{B}_{\alpha_1, \Lambda}^i(0)}(\bar \lambda_{i}) = \{0\}$. We thus conclude $\bar g_i =  0$ and
	\begin{equation}\label{eq:gamma20:1}
	\Gamma_2^i(0) = \{\lambda \in \mathbb{R}^2 | 0 \in\mN_{\mathbb{B}_{\alpha_1, \Lambda}(0)}(\lambda) \} =  \mathbb{ B}_{\alpha_1,\Lambda}(0), \quad i = 1, \cdots, L_1.
	\end{equation}
	For $i=L_1+1,\cdots, m$, since $\bar \lambda_{i} \in \text{bd}\mathbb{ B}_{\alpha_1, \Lambda}^i(0)$, for any $\lambda \in \mathbb{B}_{\epsilon,\Lambda}(\bar \lambda_{i}) \cap \mathbb{ B}_{\alpha_1,\Lambda}(0)$, we notice either $\lambda \in \text{bd}\mathbb{ B}_{\alpha_1, \Lambda}(0)$ or  $\lambda \in \text{int}\mathbb{ B}_{\alpha_1, \Lambda}(0)$.
	While  $\lambda \in \text{int}\mathbb{ B}_{\alpha_1, \Lambda}^i(0)$, by the definition of $\Gamma_2(0)$, together with $\mN_{\mathbb{B}_{\alpha_1, \Lambda}(0)}(\lambda) = \{0\}$,  we see
	$\bar g_i = 0$, which is contracted with the assumption (i). While  $\lambda \in \text{bd}\mathbb{ B}_{\alpha_1, \Lambda}(0)$, since 
	\[
	\mN_{\mathbb{B}_{\alpha_1, \Lambda}(0)}(\lambda) = \{s \lambda|s\geq 0\}, \quad\mN_{\mathbb{B}_{\alpha_1, \Lambda}(0)}(\bar \lambda) = \{s_1 \bar \lambda|s_1\geq 0\},
	\]
	together with the definition of $\Gamma_2$, we see the only choice is 
	\begin{equation}\label{eq:gamma20:2}
	\Gamma_2^i(0) = \{ \bar  \lambda_{i}\} , \quad i = L_1+1, \cdots, m.
	\end{equation}
	Similarly, for the case of $\mu_i$, by Assumption \ref{asump:existence}, 
	for $i=1,\cdots, L_2$, $\bar \mu_{i} \in \text{int} \mathbb{B}_{\alpha_0,M}(0)$, we have $-\bar \psi_i \in \mN_{\mathbb{B}_{\alpha_1, M}^i(0)}(\bar \mu_{i}) = \{0\}$. We thus conclude $\bar \psi_i =  0$ and
	\begin{equation}\label{eq:gamma20::mu}
	\Gamma_2^{i+m}(0) = \{\mu \in \mathbb{R}^3 | 0 \in\mN_{\mathbb{B}_{\alpha_0, M}(0)}(\mu) \} =  \mathbb{ B}_{\alpha_0,M}(0), \quad i = 1, \cdots, L_2.
	\end{equation}
	Similarly, for $i = L_2+1, \cdots, m$, we have
	\begin{equation}\label{eq:gamma20:mu}
	\Gamma_2^{i+m}(0) = \{ \bar  \mu_{i}\} , \quad i = L_2+1, \cdots, m.
	\end{equation}

	Choose $\epsilon >0$ small enough such that $\mathbb{B}_{4\epsilon, \Lambda}(\bar \lambda_{i}) \subset \mathbb{B}_{\alpha_1,\Lambda}(0)$ for $i=1,\cdots, L_1$ and $\mathbb{B}_{4\epsilon, M}(\bar \mu_{i}) \subset \mathbb{B}_{\alpha_0,M}(0)$ for $i=1,\cdots, L_2$. We thus conclude that
	\begin{subequations}\label{eq:cons:normacone}
		\begin{align}
		\Gamma_2(0) \cap \mathbb{B}_{\epsilon}(\bar \xi) = (&\mathbb{B}_{\epsilon,\Lambda}(\bar \lambda_{1}), \cdots, \mathbb{B}_{\epsilon,\Lambda}(\bar \lambda_{L_1}), \bar  \lambda_{{L_1+1}}, \cdots, \bar  \lambda_{m}, \\
		&\mathbb{B}_{\epsilon,M}(\bar \mu_{1}), \cdots, \mathbb{B}_{\epsilon,M}(\bar \mu_{L_2}), \bar  \mu_{{L_2+1}}, \cdots, \bar  \mu_{m} )^{T},
		\end{align}
	\end{subequations}
	where $ \mathbb{B}_{\epsilon}(\bar \xi): = \Pi_{i=1}^m\mathbb{B}_{\epsilon,\Lambda}(\bar \lambda_{i}) \times \Pi_{i=1}^m\mathbb{B}_{\epsilon,M}(\bar \mu_{i})$.
	Suppose $P=(p_1, \cdots, p_m, q_1,  \cdots, q_m)^{T}$ and $ \xi \in \Gamma_1(P) \cap \Gamma_2(0) \cap \mathbb{B}_{\epsilon}(\bar \xi)$ with  $p_i \in \mathbb{R}^2 $ and $q_i \in \mathbb{R}^3 $, $i=1, 2, \cdots, m$.
	Introduce the following constraint on $\xi = (\lambda, \mu)^T$ 
	\[
	\mathcal{R}: = \{ \xi \ | \ \lambda_{i} \in \mathbb{R}^2, \mu_{i} \in \mathbb{R}^3, \ i=1, \cdots, m \ | \   \lambda_{i} = \bar \lambda_{i}, \ i=L_1+1, \cdots, m; \ \mu_{i} = \bar \mu_{i}, \ i=L_2+1, \cdots, m   \}.
	\]
	We claim that $	\mathcal{R}$ is a convex and closed polyhedral set. It can be seen as follows. For $i=1$ or $i=2$, let  us denote $\bar L_i  = m-L_i$ and  $0_{2L_i\times2m} \in \mathbb{R}^{2L_i\times2m}$, $0_{2 \bar L_i\times2 L_i} \in \mathbb{R}^{2 \bar L_i\times2L_i}$ as the zero matrix whose elements are all zero. Denote $I_{2\bar L_i\times 2\bar L_i} \in \mathbb{R}^{2 \bar L_i\times2 \bar L_i}$ as the identity matrix. Introduce 
	\begin{align}
	&E_{+,\Lambda} = [0_{2L_1\times 2m};
	0_{2 \bar L_1\times2L}    \ I_{2 \bar L\times 2 \bar L_1}] \in \mathbb{R}^{2m \times 2m}, \\
	&E_{-,\Lambda} = [0_{2L_1\times 2m};
	0_{2 \bar L\times2L}    \ -I_{2 \bar L_1\times 2 \bar L_1}] \in \mathbb{R}^{2m \times 2m},\notag  \\
	&E_{\Lambda} = [E_{+,\Lambda}; E_{-,\Lambda}] \in \mathbb{R}^{4m \times 2m}, \quad \bar \lambda_{\mathcal{R}} := [0, \cdots, 0, \bar \lambda_{L_1+1}, \cdots, \bar \lambda_{m}]^T \in \mathbb{R}^{2m}; \\
	&E_{+,M} = [0_{3L_2\times 3m};
	0_{3 \bar L_2\times3L_2}    \ I_{3 \bar L_2\times 3 \bar L_2}] \in \mathbb{R}^{3m \times 3m}, \\
	& E_{-,M} = [0_{3L_2\times 3m};
	0_{3 \bar L_2 \times 3L_2}    \ -I_{3 \bar L_2\times 3 \bar L_2}] \in \mathbb{R}^{3m \times 3m},\notag  \\
	&E_{M} = [E_{+,M}; E_{-,M}] \in \mathbb{R}^{6m \times 3m}, \quad \bar \mu_{\mathcal{R}} := [0, \cdots, 0, \bar \mu_{L_2+1}, \cdots, \bar \mu_{m}]^T \in \mathbb{R}^{3m}.
	\end{align}
	Let us define
	\[
	E_{\Xi} = \begin{bmatrix}
	E_{\Lambda} & 0_{4m\times 3m} \\
	0_{6m\times 2m} & E_{M}
	\end{bmatrix} \in  \mathbb{R}^{10m \times 5m},\quad \bar \xi_{\mathcal{R}} := [\bar \lambda_{\mathcal{R}}, \bar \mu_{\mathcal{R}}]^T \in \mathbb{R}^{5m}. 
	\]
	We conclude $\mathcal{R} = \{ \xi \ | \ E_{\Xi} \xi \leq E_{\Xi} \bar \xi_{\mathcal{R}} \}$ and $\mathcal{R}$  is thus a polyhedral set. Actually, the following set 
	\begin{equation}\label{eq:matrix:equality}
	M(p): =  \{  \xi \ | \    \mathbb{A}  \xi -\bar y= p, \quad  \xi \in \mathcal{R}\} = \{ \xi \ | \ \mathbb{A} \xi -\bar y = p, \ E_{\Xi}  \xi \leq E_{\Xi}  \bar \xi_{\mathcal{R}} \}, \ \ 
	\end{equation}
	is also a polyhedral set. 
	%With \eqref{eq:cons:normacone}, it can be seen that $\lambda  \in \Gamma_1(p)\cap\Gamma_2(0) \cap \mathbb{B}_{\epsilon}(\bar \lambda) = M(p) \cap \mathbb{B}_{\epsilon}(\bar \lambda)$.
	
	% \begin{equation}
	% \{  \lambda \ | \    A  \lambda -\bar y= p, \quad  \lambda \in \mathcal{R}\}.
	% \end{equation}
	Actually, for any $\xi  \in \Gamma_1(p)\cap\Gamma_2(0) \cap \mathbb{B}_{\epsilon}(\bar \xi)=\hat \Gamma(p) \cap \mathbb{B}_{\epsilon}(\bar \xi) $, denote $\tilde \xi$ as its projection on $ M(0)$. Since $\bar \xi \in M(0)$, we thus have
	\[
	\|\xi - \tilde \xi \| \leq \| \xi - \bar \xi\| \leq \epsilon  \Rightarrow \tilde \xi \in \mathbb{B}_{\epsilon}( \xi) \subset \Pi_{i=1}^m\mathbb{B}_{\alpha_1,\Lambda}(\bar \lambda_{i}) \times \Pi_{i=1}^m\mathbb{B}_{\alpha_0,M}(\bar \mu_{i}).%   \Rightarrow  \tilde \xi \in \mathbb{B}_{\alpha}(0).
	\]
	Together with $\tilde \xi \in M(0)$ and $\tilde \xi \in \mathcal{R}$, we see $\tilde \xi \in \Gamma_2(0)$  by \eqref{eq:gamma20:1} and \eqref{eq:gamma20:2}. We thus conclude that $\tilde \xi \in \hat \Gamma(0) =\Gamma_1(0)\cap\Gamma_2(0)$.    By the celebrated results of Hoffman error bound \cite{Hof} on the polyhedral set in \eqref{eq:matrix:equality}, for any  $\xi \in\hat \Gamma(p) \cap \mathbb{B}_{\epsilon}(\bar \xi)$, there exists a constant $\kappa$ such that
	\begin{equation}
	\text{dist}(\xi, \hat \Gamma(0)) \leq  \|\xi - \tilde \xi \| = \text{dist}(\xi, M(0)) \leq \kappa \|p\|,\quad \forall \xi  \in \hat \Gamma(p) \cap \mathbb{B}_{\epsilon}(\bar \xi),
	\end{equation}
	since $E_{\Xi}  \xi \leq E_{\Xi} \bar \xi_0$ by $\xi  \in \hat \Gamma(p)= \Gamma_1(p)\cap\Gamma_2(0)$.
	We thus get the calmness of $\hat\Gamma(p)$ at $(0,\bar \xi)$. While $L_1=0$ (or $L_2=0$), i.e., $\bar \lambda_{i} \in \text{bd}  \mathbb{B}_{\alpha_1,\Lambda}(0)$ (or  $\bar \mu_{i} \in \text{bd}  \mathbb{B}_{\alpha_0,M}(0)$), $i=1, \cdots, m$, one can readily check that
	$\hat \Gamma(p) = \Gamma_1(p)\cap\Gamma_2(0) =  \emptyset$ whenever $p_i \neq 0$ with $p=(p_1, \cdots, p_m)^{T}$ (or whenever $q_i \neq 0$ with $q=(q_1, \cdots, q_m)^{T}$). The case $L_1=m$ or $L_2=m$ is similar depending the conditions of $\mathcal{R}$ in \eqref{eq:matrix:equality}. The calmness of $\hat \Gamma$ follows and the proof is finished.
	%  denoting $p^L = (p_1, \cdots, p_L )^T$, $p^M = (p_{L+1}, \cdots, p _m)$, $A_L = (a_{i,j})_{L\times 2m}$ where $A = (a_{i,j})_{m \times 2m}$ and $A = \begin{bmatrix}
	% A_L \\A_M
	% \end{bmatrix}$. Introduce
	% \[
	% \Gamma_1^L(0) =  \{ \lambda^L \ | \ 0 = A_L \lambda^L - b_L\}, \quad \lambda^L = (\lambda_1, \cdots, \lambda_L)^T,\quad b^L = (b_1, \cdots, b_L)^T.
	% \]
	% Suppose $\lambda_1^L$ being the projection of $\lambda^L$ to $\Gamma_1^L(0)$. Since $\bar \lambda_L = (\bar \lambda_{J_1}, \cdots, \bar \lambda_{J_L})^{T} \in  \Gamma_1^L(0)$, we have
	% \[
	% \|\lambda_i-  \lambda_{1,i}\| \leq \| \lambda_i - \bar \lambda_{J_i}\| \leq \epsilon \Rightarrow \lambda_1^L \in \Pi_{i=1}^L\Gamma_2^i(0),  i=1, \cdots, L \Rightarrow
	% \lambda_1^L \in   \Gamma_1^L(0) \cap \Pi_{i=1}^L\Gamma_2^i(0) \cap \Pi_{i=1}^L\mathbb{B}_{\epsilon}^i(\bar \lambda_{J_i}).
	% \]
	% With the help of calmness of $A_L$ (due to the Hoffman error bound), this exists $\kappa_L >0$ such that
	% \[
	% \text{dist}(\lambda^L, \Gamma_1^L(0) \cap \Pi_{i=1}^L\Gamma_2^i(0)) \leq \| \lambda^L - \lambda_1^L\| = \text{dist}(\lambda^L, \Gamma_1^L(0)) \leq \kappa_L \|p_L\|, \forall  \lambda \in \Gamma_1(p) \cap \Gamma_2(0) \cap \mathbb{B}_{\epsilon}(\bar \lambda).
	% \]
	% For $ \lambda_{i}$, $i=L+1, \cdots, m$, since 
	% \[
	% \{ \lambda_i \ |  p_i = A_i \}
	% \]
\end{proof}

%\begin{remark}
%	Similar result is also given in \cite{YTP} (see Example 4.1(ii) of \cite{YTP}), where the delicate analysis based on LMI-representable (Linear Matrix Inequalities) functions are employed.
%\end{remark}

Henceforth, we denote $\mathcal{X}$ as the solution sets for the dual problem \eqref{eq:dual:tgv}. With the stopping criterion \eqref{stop:a}, \eqref{stop:b1}, and the conditions of the Proposition \ref{prop:near},  we have the following global and local convergence.
\begin{theorem}\label{thm:local:rate}
	For the TGV regularized and perturbed problem \eqref{eq:tgv_primal}, denote the iteration sequence $(u^k, w^k,h_1^k, h_2^k, \lambda^k,\mu^k)$ generated by ALM-PDP with stopping criteria \eqref{stop:a}.
	Then the sequence $(u^k, w^k,h_1^k, h_2^k, \lambda^k,\mu^k)$  is bounded and converges to $(u^*, w^*,h_1^*, h_2^*, \lambda^*,\mu^*)$ which is a saddle point of \eqref{eq:aug:lag:uw1}.  $T_{\mathfrak{D}}:=\partial \mathfrak{D}$ is metrically subregular for the origin under Assumption \ref{asump:existence}. Supposing the  modulus is $\kappa_{\mathfrak{D}}$ and the additional stopping criteria \eqref{stop:b1} is employed, then the sequence $\xi^k=(\lambda^k, \mu^k)$ converges to  $(\lambda^*,\mu^*) \in \mathcal{X}$ and for arbitrary sufficiently large $k$, 
	\begin{equation}\label{eq:convergence:rate:dual}
	\emph{dist}(\xi^{k+1}, \mathcal{X}) \leq \theta_k \emph{dist}(\xi^k, \mathcal{X}),
	\end{equation}
	where
	\[
	\theta_k = [\kappa_{\mathfrak{D}}(\kappa_{\mathfrak{D}}^2 + \sigma_k^2)^{-1/2} + \delta_k](1-\delta_k)^{-1}, \ \emph{as} \ k\rightarrow \infty, \  \theta_k \rightarrow  \theta_{\infty} = \kappa_{\mathfrak{D}}(\kappa_{\mathfrak{D}}^2+ \sigma_{\infty}^2)^{-1/2} < 1.
	\]
\end{theorem}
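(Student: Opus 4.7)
The plan is to exploit the well-known equivalence, due to Rockafellar \cite{Roc1,Roc2}, between the augmented Lagrangian method applied to the primal problem \eqref{eq:tgv_primal} and the proximal point algorithm (PPA) applied to the dual operator $T_{\mathfrak{D}} = \partial \mathfrak{D}$. Under this correspondence, the multiplier update $\xi^{k+1} = (\lambda^{k+1},\mu^{k+1})$ corresponds to an inexact proximal point step
\[
\xi^{k+1} \approx (I + \sigma_k T_{\mathfrak{D}})^{-1}(\xi^k),
\]
where the nonlinear update of the multipliers in \eqref{eq:update:multi:pqway} and the optimality system for the inner subproblem translate exactly into the PPA inclusion. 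The inner inexactness in solving the augmented Lagrangian subproblem with accuracy $\epsilon_k$ (resp.\ $\delta_k$) matches the standard PPA stopping rules of Rockafellar.

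First, for global convergence, I would verify that the stopping criterion \eqref{stop:a} implies Rockafellar's criterion (A) on the inexact PPA iterates. Since $\sum_k \epsilon_k < \infty$ and $T_{\mathfrak{D}}$ is maximal monotone (being the subdifferential of the proper closed convex function $\mathfrak{D}$ in \eqref{eq:dual:tgv}), the classical theorem of Rockafellar yields boundedness of $\{\xi^k\}$ and convergence of $\xi^k \to \xi^*\in \mathcal{X}$. Boundedness and convergence of the primal iterates $(u^k,w^k,h_1^k,h_2^k)$ then follow from the strong convexity of $\mathfrak{F}$ in $(u,w)$ (inherited from the positive-definiteness of $H$ and the term $\frac{a}{2}\|w\|_2^2$), using that $\nabla_u \mathfrak{F}$ and $\nabla_w \mathfrak{F}$ control the distance to the primal solution and that $(h_1^{k+1},h_2^{k+1})$ can be recovered from $(u^{k+1},w^{k+1},\lambda^{k+1},\mu^{k+1})$ via the Moreau identity \eqref{eq:moreau:sub12:pq}.

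Second, for the local linear rate, I would invoke the extension of Rockafellar's result by Luque \cite{LU} (see also \cite{LE,LST}): if $T_{\mathfrak{D}}^{-1}$ is calm at $(0,\bar\xi)$ — equivalently, by \cite{DR} (Theorem 3H.3), if $T_{\mathfrak{D}}$ is metrically subregular at $\bar\xi$ for the origin with modulus $\kappa_{\mathfrak{D}}$, which is precisely the conclusion of Theorem \ref{thm:metric:regular:dual:iso} — and if the inner accuracy satisfies \eqref{stop:b1}, then the inexact PPA iterates enjoy the local $Q$-linear rate
\[
\operatorname{dist}(\xi^{k+1},\mathcal{X}) \le \theta_k \operatorname{dist}(\xi^k,\mathcal{X}),\qquad \theta_k = \frac{\kappa_{\mathfrak{D}}(\kappa_{\mathfrak{D}}^2+\sigma_k^2)^{-1/2} + \delta_k}{1-\delta_k},
\]
for all sufficiently large $k$, with $\theta_k \to \kappa_{\mathfrak{D}}(\kappa_{\mathfrak{D}}^2+\sigma_\infty^2)^{-1/2}<1$ since $\sigma_k\to\sigma_\infty>0$ and $\delta_k\to 0$. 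The proof of \eqref{eq:convergence:rate:dual} then amounts to substituting the metric subregularity bound into Luque's estimate.

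The main technical obstacle is the faithful translation between the augmented-Lagrangian inner stopping criteria \eqref{stop:a}–\eqref{stop:b1} expressed on the primal function $\Phi_k$ and the PPA criteria expressed in terms of the resolvent of $T_{\mathfrak{D}}$. This translation is standard but requires care: one uses the strong convexity of $\Phi_k$ (with parameter $\sigma_k$ along the multiplier directions, via the conjugate relationship) to show that a primal suboptimality of order $\epsilon_k^2/(2\sigma_k)$ controls the residual $\operatorname{dist}(\xi^{k+1}-\xi^k, -\sigma_k T_{\mathfrak{D}}(\xi^{k+1}))$ at the required order $\epsilon_k$ (respectively $\delta_k \|\xi^{k+1}-\xi^k\|$ for \eqref{stop:b1}). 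Once this equivalence is in place, both the global convergence assertion and the asymptotic rate formula follow immediately from the cited PPA results, and Theorem \ref{thm:metric:regular:dual:iso} supplies the metric-subregularity hypothesis needed in the second part.
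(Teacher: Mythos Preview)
Your proposal is correct and follows essentially the same route as the paper's own proof: both reduce the result to Rockafellar's equivalence between ALM and the proximal point algorithm on $T_{\mathfrak{D}}=\partial\mathfrak{D}$, obtain global convergence from \cite{Roc1,Roc2} under criterion \eqref{stop:a}, deduce convergence of the primal variables from the strong convexity of $\mathfrak{F}$ in $(u,w)$, and derive the local rate \eqref{eq:convergence:rate:dual} from the metric subregularity of $T_{\mathfrak{D}}$ (supplied by Theorem~\ref{thm:metric:regular:dual:iso}) together with criterion \eqref{stop:b1}. The only cosmetic differences are that the paper first explicitly checks existence of primal and dual optima via coercivity and Fenchel--Rockafellar duality before invoking \cite{Roc2} (Theorems~4 and~5), whereas you implicitly assume $\mathcal{X}\neq\emptyset$ and cite Luque~\cite{LU} for the rate under metric subregularity; the latter is in fact the more precise reference, since Rockafellar's original Theorem~2 in \cite{Roc1} assumes Lipschitz continuity of $T_{\mathfrak{D}}^{-1}$ at the origin rather than mere calmness.
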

\begin{proof}
	Since $U\times V$ is finite-dimensional reflexive space and the primal function \eqref{eq:tgv_primal} is l.s.c. proper convex functional and strongly convex, hence coercive. Thus the existence of the solution can be guaranteed \cite{KK} (Theorem 4.25). Furthermore, since $\dom \mathfrak{F} = U\times V$, by Fenchel-Rockafellar theory \cite{KK} (Chapter 4.3) (or Theorem 5.7 of \cite{Cla}), the solution to the dual problem \eqref{eq:dual:tgv} is not empty and 
	\[
	\inf_{u \in U, w \in V} \mathfrak{F}(u,w) = \sup_{ \lambda \in V, \mu \in W} -\mathfrak{D}(\lambda, \mu).
	\]
	By \cite{Roc2} (Theorem 4) (or Theorem 1 of \cite{Roc1} where the augmented Lagrangian method  essentially is equivalent to the proximal point method applying to the dual problem $\partial \mathfrak{D}$), with criterion \eqref{stop:a}, we get the boundedness of $\{\xi^k\}$. The uniqueness of $(u^*,w^*)$ follows from the strongly convexity of $\mathfrak(u,w)$ and the $h_1^*=\nabla u^* -w^*$ and $h_2^*=\mE w^*$ which come from the optimality conditions for $\lambda$ and $\mu$ for $L_{\sigma}(u,w,h_1,h_2;\lambda,\mu)$ in \eqref{eq:aug:lag:uw1}. The boundedness of $(u^k,w^k,h_1^k, h_2^k)$ and convergence of $(u^k,w^k,h_1^k, h_2^k, \lambda^k, \mu^k)$ then follows by \cite{Roc2} (Theorem 4). 
	
	The local convergence rate  \eqref{eq:convergence:rate:dual} with metric subregularity of $T_{\mathfrak{D}}$ by Theorem \ref{thm:metric:regular:dual:iso} and the stopping criteria  \eqref{stop:a}  \eqref{stop:b1} can be obtained from \cite{Roc2} (Theorem 5) (or Theorem 2 of \cite{Roc1}). 
	%	
	%	Now we turn to the local convergence rate of $(u^k,p^k)$. By the metrical subregularity of $T_l$, for sufficiently large $k$, we have
	%	\[
	%	\| (u^{k+1}, p^{k+1}) - (u^*, p^*))\| + \text{dist} (\lambda^k, \mathcal{X}^A) \leq \kappa_l \text{dist} (0, T_l(u^{k+1}, p^{k+1}, \lambda^{k+1})).
	%	\]
	%	Together with the stopping criteria \eqref{stop:b2} and \cite{Roc2} (Theorem 5 and Corollary with formula (4.21)), we arrive at 
	%	\begin{align*}
	%	\| (u^{k+1}, p^{k+1}) - (u^*, p^*))\| &\leq \kappa_l \sqrt{{\delta_k'^2}{\sigma_k^{-2}} \|\lambda^{k+1} -\lambda^k\|^2 + {\sigma_k^{-2}} \|\lambda^{k+1} -\lambda^k\|^2 } \\
	%	& = \kappa_l\sqrt{\delta_k'^2+1}\sigma_k^{-1}\|\lambda^{k+1} -\lambda^k\| \leq \theta_k' \|\lambda^{k+1} -\lambda^k\|,
	%	\end{align*}
	%	which leads to \eqref{eq:convergence:rate:up:ani}.
\end{proof}

\begin{remark}
	By strong convexity of $\mathfrak{F}$ on $(u,w)$ of \eqref{eq:tgv_primal}, we get the uniqueness of the primal solution $(u^*,w^*)$. By the optimality conditions \eqref{eq:optimalites:tgv:pd}, we have 
	\[
	[u^*,w^*]^T = -A^{-1}([K^*f_0, 0]^T - B[\lambda^*,\mu^*]^T).
	\]
%	Actually, for the updates in algorithm \ref{alm:SSN_PDP:dual}, we have $[u^{k+1},w^{k+1}]^T = -A^{-1}([K^*f_0, 0]^T - B[\lambda^{k+1},\mu^{k+1}]^T)$ by \eqref{eq:updateuw:after:pq}. We thus conclude the locally linear convergence rate of $(u^{k+1}, w^{k+1})$ with Theorem \ref{thm:local:rate}.
\end{remark}
\begin{remark}
	If letting $\sigma_{\infty} \rightarrow +\infty$, we can get superlinear convergence rate by Theorem \ref{thm:local:rate}. However, the linear system for Newton updates \eqref{eq:tgv:primaluw:eq} or \eqref{eq:ssn:dual} will be more ill-posed and hard to solve for large $\sigma_k$.  $\sigma_k$ thus can be fixed  without going to $+\infty$ after several iterations, while ALM  can obtain local linear convergence rate \cite{Roc2}. 
\end{remark}
\section{Numerical Experiments}\label{sec:numer}

\subsection{The choice of parameter $a$: tests for PSNR}
The choice of the parameter $a$ in \eqref{eq:tgv_primal} is a subtle issue. The variable $w$ comes from the TGV regularization and does not belong to the original data term $F(u)$ as in \eqref{eq:tgv_primal:o}. However, the strong convexity of $w$ in  \eqref{eq:tgv_primal}  will certainly bring out some advantage for the semismooth Newton solver in \eqref{eq:tgv:primaluw:eq} compared to \eqref{eq:tgv_primal:o}. Surprisingly, adding the strongly convex term $\frac{a}{2}\|w\|_{2}^2$ can experimentally improve the quality of the restored image for many of the corrupted images (e.g., see Figure \ref{fig:psnr:test}). As shown in Table \ref{tab:inflence:a:w}, $a=1$ can bring out better PNSR for many cases including different noise level and different sizes of images presented compared to the case $a=0$ or $a=10^{-8}$, where all cases are computed by first-order primal-dual method \cite{CP}. Henceforth, we choose $a=1$ for our numerical tests. The perturbed TGV regularization \eqref{eq:tgv_primal} can be seen an  modified TGV regularization instead of approximation due to large $a$. We mainly focus on the model \eqref{eq:tgv_primal} with $a=1$ in this paper.
\subsection{Numerical Tests}
For numerical experiments, we focus on the TGV regularized image denoising model for testing all the proposed algorithms, i.e., $K=I$, $\mu=0$, $H=I$, $a=1$, and $f=f_0$. We employ the finite difference discretization of the discrete gradient $\nabla$  and divergence operator $\Div$ \cite{BPK, CP}, which satisfies \eqref{eq:operators:adjoints} and are very convenient for operator actions based implementation. 
Let us introduce the following residuals of $u$, $w$, $\lambda$, and $\mu$ for the primal-dual optimality conditions \eqref{eq:optimalites:tgv:pd} of the saddle-point problem \eqref{eq:tgv-denoising-saddle:original}
\begin{align*}
&\text{res}(u)^{k+1}:  = \|u^{k+1} - f - \Div \lambda^{k+1}\|_F, \quad 
\text{res}(w)^{k+1}:  = \|aw^{k+1} - \lambda^{k+1} - \Div \mu^{k+1}\|_F,  \\
&\text{res}(\lambda)^{k+1} := \|\lambda^{k+1} - \mathcal{P}_{\alpha_1}(\lambda^{k+1}+ c_0 (\nabla u^{k+1}-w^{k+1}))\|_F, \\
& \text{res}(\mu)^{k+1} := \|\mu^{k+1} - \mathcal{P}_{\alpha_0}(\mu^{k+1}+ c_0 \mathcal{E} w^{k+1})\|_F,
\end{align*}
where $\|\cdot\|_F$ denotes the Frobenius norm and $c_0$ is a positive constant and the projections are defined as in \eqref{eq:projections:def}. 
Our stopping criterion and main metric for all the algorithms compared is  the following scaled sum of these residuals,
\begin{equation}\label{eq:stop:cri}
\mathfrak{U}^{k+1}: = (\text{res}(u)^{k+1} + \text{res}(w)^{k+1}+\text{res}(\lambda)^{k+1}+ \text{res}(\mu)^{k+1})/\|f\|_F,
\end{equation} 
which turns out to be very strict as in numerics.
With \eqref{eq:tgv_primal} and \eqref{eq:dual:tgv}, we also introduce the primal-dual gap for comparison (see also \cite{HPRS})
\begin{equation}\label{eq:gap}
\tilde \gap^{k+1}= \tilde \gap^{k+1}(u^{k+1},w^{k+1}, \lambda^{k+1}, \mu^{k+1}) := \mathfrak{F}(u^{k+1}, w^{k+1}) + \mathfrak{D}(\lambda^{k+1}, \mu^{k+1}).
\end{equation}
We will use the following normalized primal-dual gap \cite{CP}
\begin{equation}  \label{eq:l2-tv-gap}
\gap^{k+1} :  = \tilde \gap^{k+1} /NM, \quad \text{with} \ \  NM = N*M, \ \  u^k \in \mathbb{R}^{N\times M}.
\end{equation}

Let us now turn to the stopping criterion for linear iterative solver for Newton updates, i.e.,  BiCGSTAB (biconjugate gradient stabilized method) for each linear system for the Newton update \eqref{eq:tgv:primaluw:eq}  in Algorithm \ref{alm:SSN_PDP}. We use BiCGSTAB (see Figure 9.1 of \cite{VAN}), which is very efficient for nonsymmetric linear system. The following stopping criterion is employed for solving linear systems to get  the Newton updates with BiCGSTAB \cite{HS}, 
\begin{equation}\label{eq:stop:bicg}
\text{tol}_{k+1}: =.1\min\left\{ \left(\frac{\text{res}_k}{\text{res}_0}\right)^{1.5},  \  \frac{\text{res}_k}{\text{res}_0}  \right\},
\end{equation}
which can help catch the superlinear convergence of semismooth Newton. The $\text{res}_k$ in \eqref{eq:stop:bicg} denotes the residual of the corresponding linear system for the Newton update after the $k$-th BiCGSTAB. %We set $\text{tol}_{k+1} \in [10^{-9}, 10^{-92}]$ for our tests.

Now, we turn to the most important stopping criterion \eqref{stop:a}, \eqref{stop:b1} of each ALM iteration for determining how many Newton iterations are needed when solving the corresponding nonlinear systems \eqref{eq:opti:pq}. For the criterion \eqref{stop:b1}, a more practical stopping criterion of ALM for cone programming can be found in \cite{CST}.
With $\mathcal{F}$ as in \eqref{eq:opti:pq}, for the $k$-th ALM iteration, we introduce
%\begin{subequations}\label{eq:alm:stop:ssnpt}
\begin{align}
&\mathfrak{R}_{k,SSN^l}^l: =  \|u^{l+1}-f + \nabla^*p^{l+1}\|_F + 
\|aw^{l+1} - p^{l+1} + \mE^*q^{l+1}\|_F \label{eq:alm:stop:ssnpt}  \\
&+\|-({\lambda^k}+ {\sigma_k}(\nabla u^{l+1} -w^{l+1}))+\max(1.0, {|{\lambda^k}+ {\sigma_k}(\nabla u^{l+1} -w^{l+1})|}/{\alpha_1}) p^{l+1}\|_F \notag \\
&+ \|- ({\mu^k}+ {\sigma_k}\mE w^{l+1}) + \max(1.0, {|{\mu^k}+ {\sigma_k}\mE w^{l+1}|}/{\alpha_0})q^{l+1} \|_F. \notag
\end{align}

%\end{subequations}
Here $x^{l+1}$ is generated by semismooth Newton iterations in Algorithm \ref{alm:SSN_PDP}  before the projection to the feasible sets of $p$ and $q$. 
We found the following empirical stopping criterion for seimsmooth Newton iterations during each ALM iteration is efficient experimentally, 
\begin{equation}\label{eq:stop:alm}
\mathfrak{R}_{k,SSN^l}^l  \leq {\delta_k}/{\sigma_k},
\end{equation}
where $\delta_k$ is a small parameter which can be chosen as fixed constants including  $10^{-1}$, $10^{-3}$, $10^{-5}$  in our numerical tests. We emphasize that divided by $\sigma_k$ is of critical importance for the convergence of ALM, which is also required by the stopping criterion \eqref{stop:a}, \eqref{stop:b1}.

It can be seen that while $\mathfrak{R}_{k,SSN^l}^l \rightarrow 0$,  $|\mF(x^l)| \rightarrow 0$ and  $x^l = (u^l, w^l, p^l, q^l)$ converges to the solution of \eqref{eq:opti:pq}. We thus can recover $(h_1^l, h_2^l)$ with\eqref{eq:update:multi:pqway} and conclude that
$(u^l, w^l, h_1^l, h_2^l)$ will converges to a minimizer of \eqref{eq:update:primals:alm} by the convexity of $\Phi_k(u,w,h_1,h_2) =L_{\sigma_k}(u,w,h_1,h_2;\lambda^k,\mu^k)$ on $(u, w, h_1, h_2)$. It follows that that $\Phi_k(u^{l}, w^{l}, h_1^{l}, h_2^{l}) - \inf \Phi_k(u,w,h_1,h_2)$ will converge to zero and the stopping criterion \eqref{stop:a} and \eqref{stop:b1} will satisfy eventually, when $\mathfrak{R}_{k,SSN^l}^l \rightarrow 0$.

For numerical comparisons, we mainly choose the accelerated primal-dual algorithm ALG2 (Algorithm 2) in \cite{CP} with an asymptotic convergence rate $\mathcal{O}(1/k^2)$. The implement of ALG2 is based on following  saddle-point formulation of \eqref{eq:tgv-denoising-saddle:original}
\begin{equation}
\mathbb{F}(u,w) + \langle \mathbb{K}(u,w)^T, (p,q)^T \rangle - \mathbb{G}(p,q),
\end{equation}
where $\mathbb{F}(u,w): = F(u) + a/2\|w\|_{2}^2$, $\mathbb{G}(p,q) = \mI_{\{\norm[\infty]{\lambda} \leq \alpha_1\}}(\lambda) + \mI_{\{\norm[\infty]{\mu} \leq \alpha_0\}}(\mu)$, and $\mathbb{K}=B^*$ with $B$ in \eqref{eq:newton:deri:b}. The parameters are as follows \cite{CP}: $\sigma_0$=$\tau_0 = 1/\sqrt{L}$, $L=12$, $\gamma = 0.7*\min(a,1.0)$. 

Here we do not compare with the primal-dual semismooth Newton method as in \cite{HPRS}, which was proposed as a direct solver for the TGV model.  In \cite{HPRS}, additional strong Tikhonov regularizations  on the dual variables $\lambda$ and $\mu$ including  $-\frac{\gamma_1}{2}\|\lambda\|_{2}^2$ and $-\frac{\gamma_2}{2}\|\mu\|_{2}^2$ with fixed $\gamma_1$ and $\gamma_2$ are added to \eqref{eq:frak:L},   which is quite different from the \textbf{SSN-PDP} within the augmented Lagrangian method here.

\begin{figure}%[!htb]
	%\graphicspath{{fig//}}
	\begin{center}
		\subfloat[Train]
		{\includegraphics[width=2.0cm]{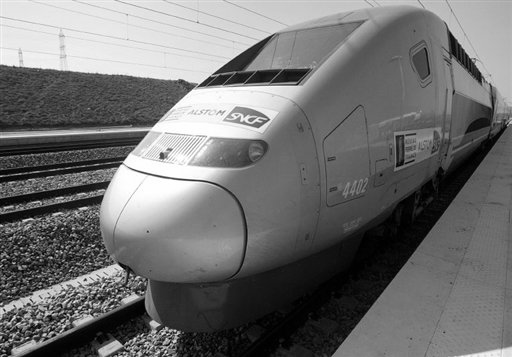}}\ 
		\subfloat[Train 1]
		{\includegraphics[width=2.0cm]{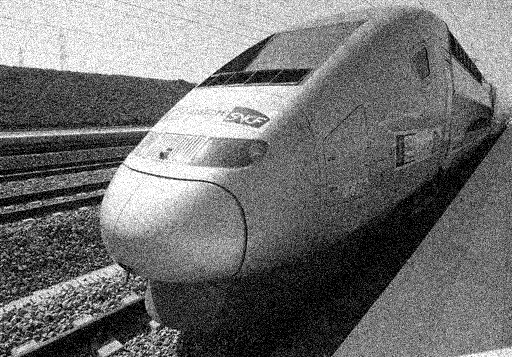}} \ 
		\subfloat[Train 2]
		{\includegraphics[width=2.0cm]{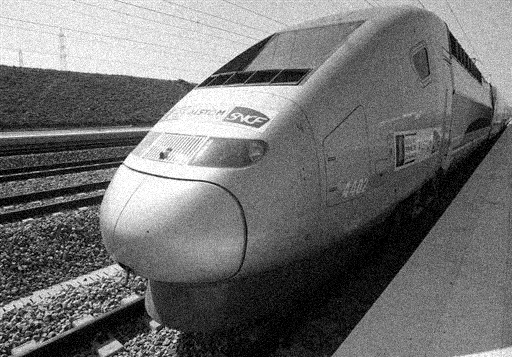}} \ 
		\subfloat[Sails]
		{\includegraphics[width=2.0cm]{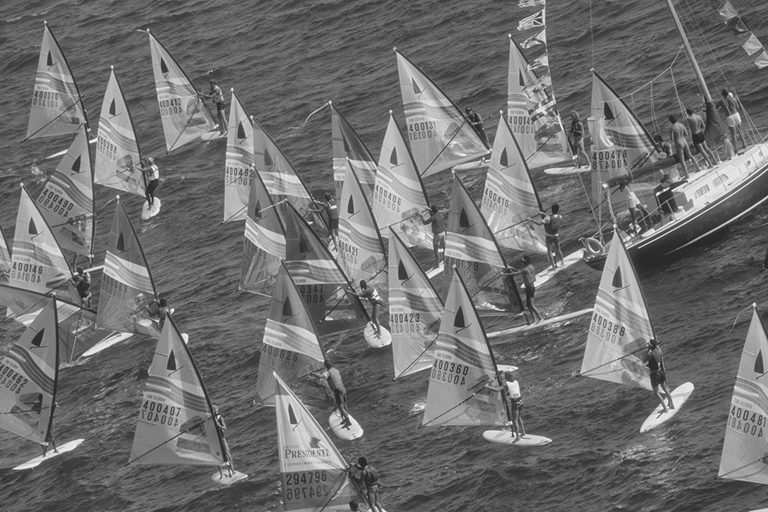}}\ 
		\subfloat[Sails 1]
		{\includegraphics[width=2.0cm]{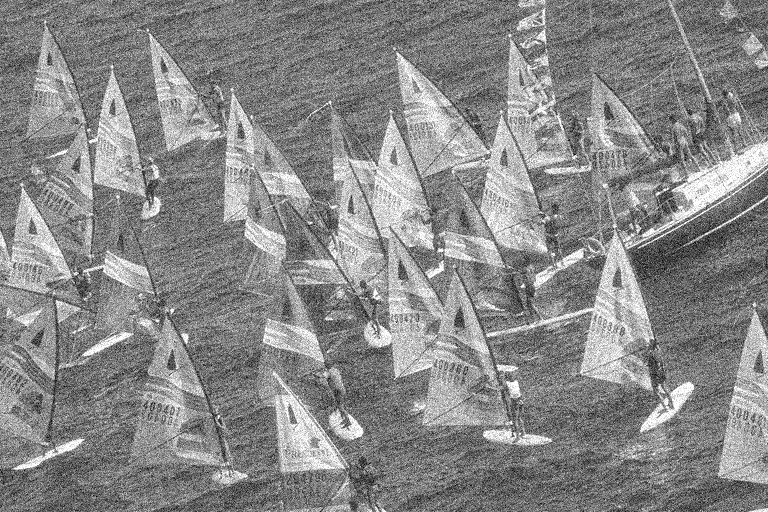}} \ 
		\subfloat[Sails 2]
		{\includegraphics[width=2.0cm]{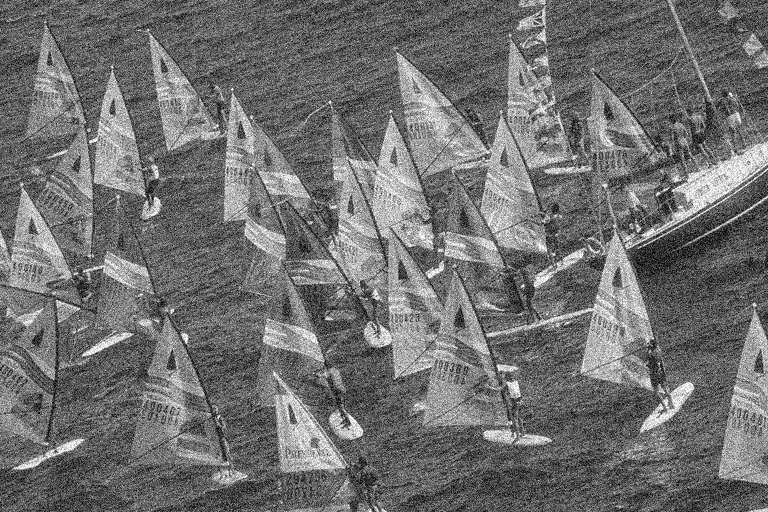}} \\
		\subfloat[Baboon]
		{\includegraphics[width=2.0cm]{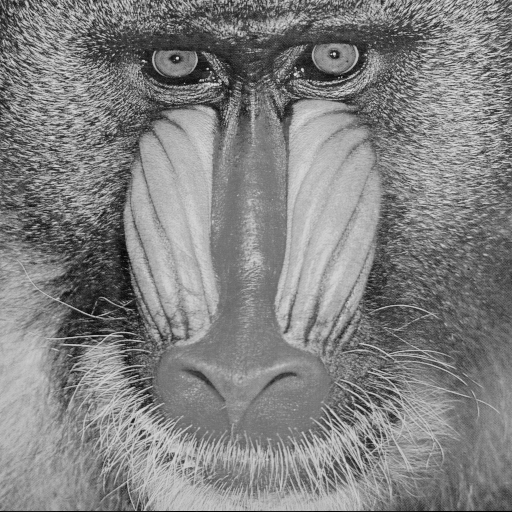}}\ 
		\subfloat[Baboon 1]
		{\includegraphics[width=2.0cm]{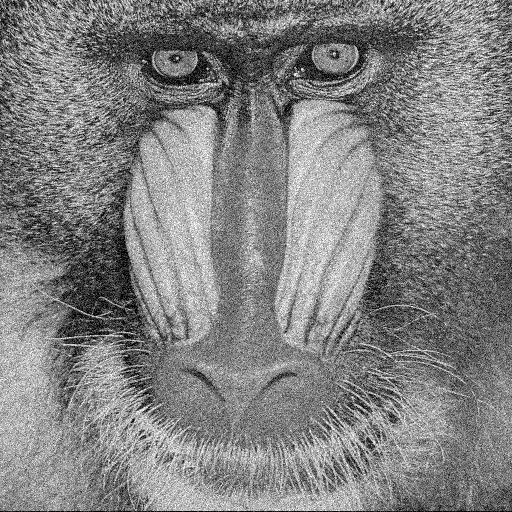}} \ 
		\subfloat[Baboon 2]
		{\includegraphics[width=2.0cm]{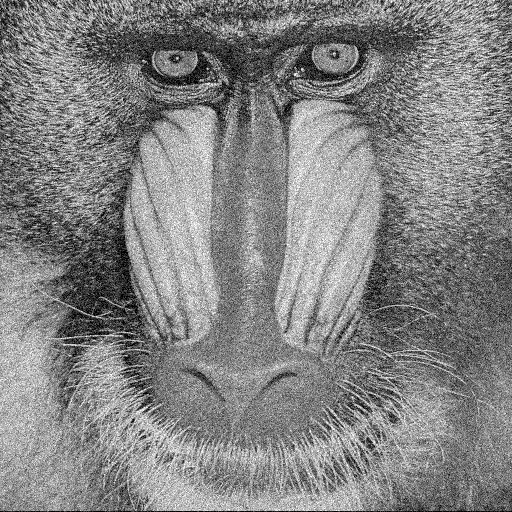}} \
		\subfloat[Man]
		{\includegraphics[width=2.0cm]{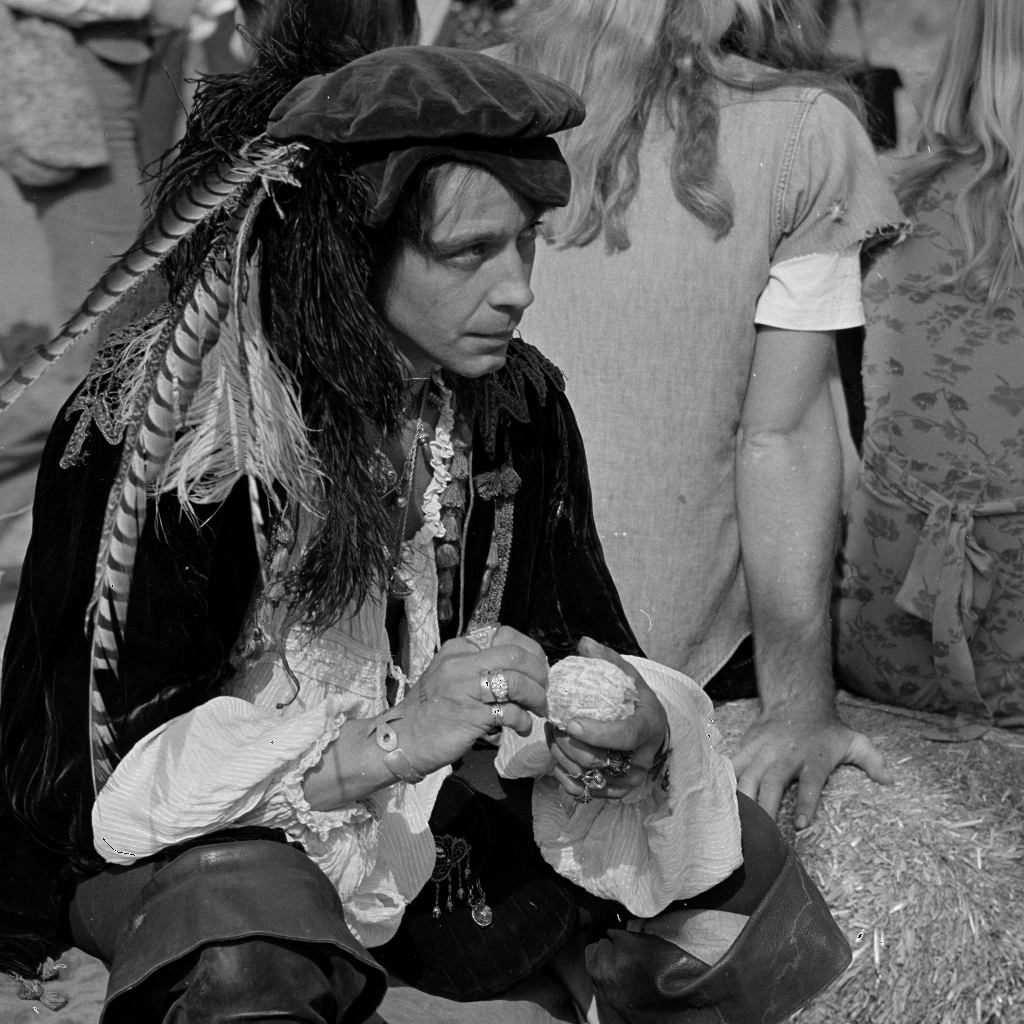}}\
		\subfloat[Man 1]
		{\includegraphics[width=2.0cm]{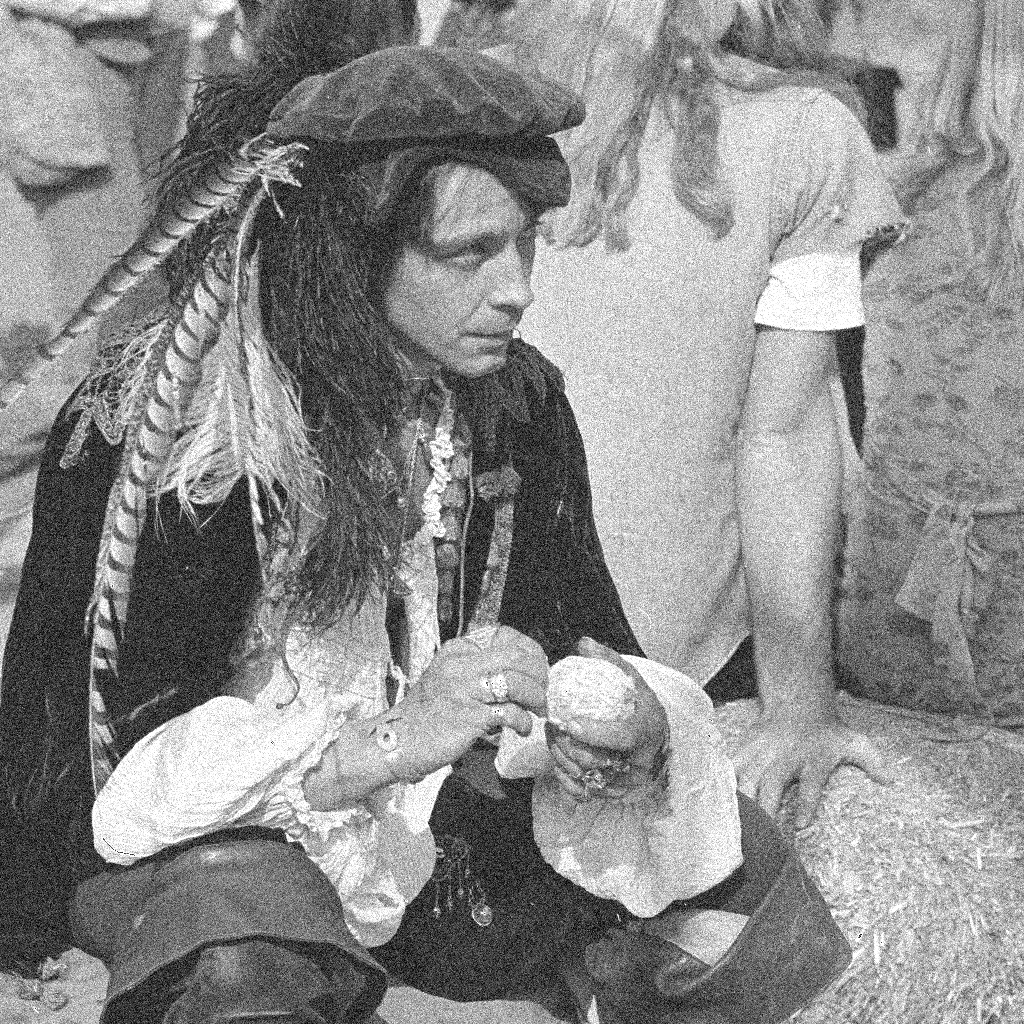}} \ 
		\subfloat[Man 2]
		{\includegraphics[width=2.0cm]{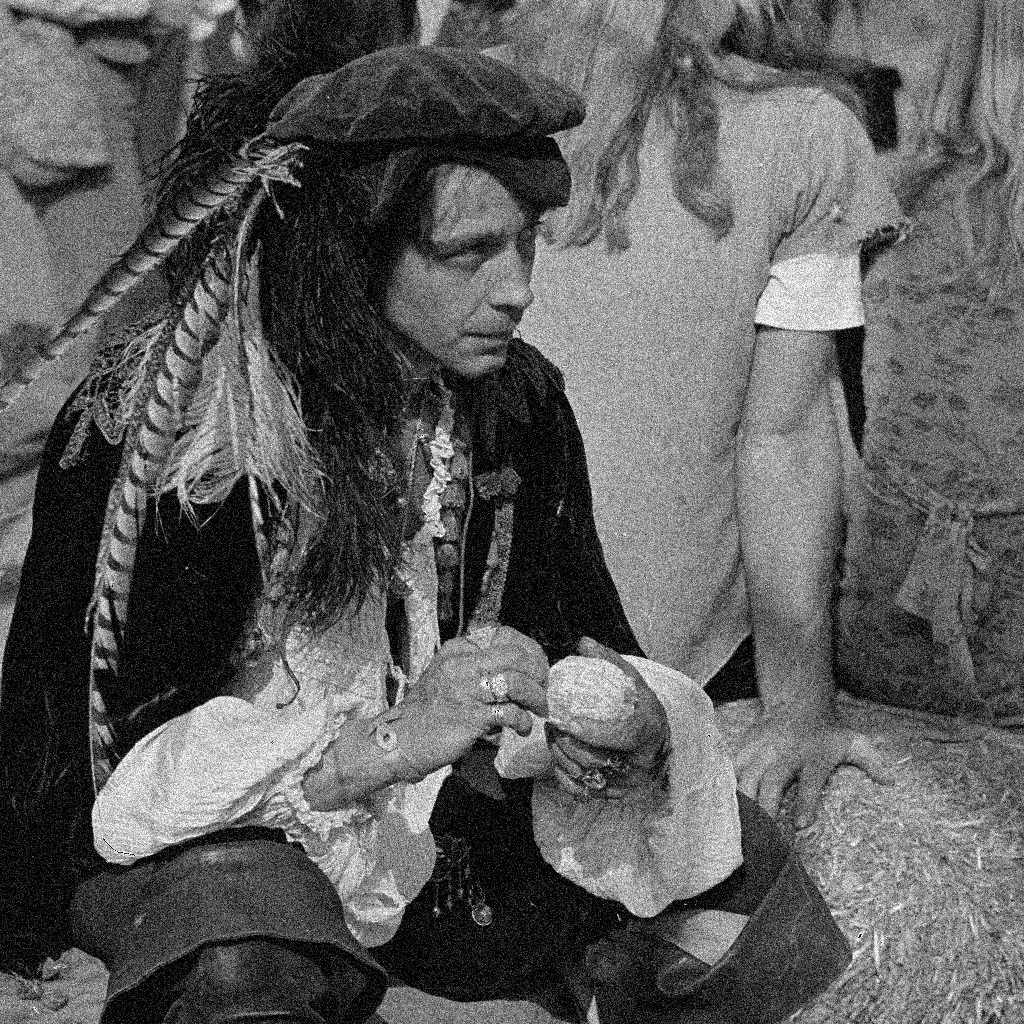}}		
	\end{center}
	%\caption{Fronalpstock}
	%	{\scriptsize
	\caption{{\small \it Original and corrupted images for PSNR test with TGV regularization. The sizes of the original images are Train: 512 $\times$ 357; Sails: 512 $\times$ 768; Baboon: 512 $\times$ 512; Man: 1024 $\times$ 1024. The corresponding noise level of the corrupted images are Train 1, Baboon 1, Sails 1, Man 2: with 10\% Gaussian noise; Man 1: with 20\% Gaussian noise; Train 2, Baboon 2, Sails 2: with 5\% Gaussian noise. } }
	%}
	\label{fig:psnr:test}
\end{figure}

\begin{table}%[!htb]
	\centering % centering table
	\begin{tabular}{rl@{}l@{}l@{}l@{}l@{}l@{}l@{}l@{}l@{}l@{}l@{}l@{}l@{}l@{}l@{}l@{}l@{}l@{}}
		%l@{}r@{}rl@{}l@{}r} % creating ten columns{}l@{}l@{}l@{
		%\begin{tabular}{lr@lr@r@lr@r@lr@r@lr@r@l} % creating ten columns
		%\begin{tabular}{l*{5}{c}r}
		\toprule
		%\cmidrule{2-9}
		%& \multicolumn{6}{c}{$\alpha = 0.2$}
		& \multicolumn{6}{c}{$a = 1.0$}
		& \multicolumn{6}{c}{$a = 10^{-8}$}
		& \multicolumn{6}{c}{$a = 0$}\\
		%\cmidrule{2-12} %inserting double-line
		\midrule
		%\cmidrule{14-15}
		%& \multicolumn{3}{c}{$n(t)$}
		%& \multicolumn{2}{c}{$\|u-u^*\|_{2}$}    maxits = 4000;
		
		& \multicolumn{2}{c}{$\text{PSNR}$}
		& \multicolumn{2}{c}{$\text{RMSE}$}
		& \multicolumn{2}{c}{$\text{SSIM}$}
		& \multicolumn{2}{c}{$\text{PSNR}$}
		& \multicolumn{2}{c}{$\text{RMSE}$}
		& \multicolumn{2}{c}{$\text{SSIM}$}
		& \multicolumn{2}{c}{$\text{PSNR}$}
		& \multicolumn{2}{c}{$\text{RMSE}$}
		& \multicolumn{2}{c}{$\text{SSIM}$}\\
		%\cmidrule{1-15} % inserts single-line
		\midrule
		%ALG2&& 97&(1.66s)  && 466&(7.51s) && 261&(4.53s) && 1189&(20.40s)\\
		%paDR &&69&(1.50s) && 303&(6.55s) && 92&(1.97s) && 420&(9.37)\		%\midrule % inserts single-line		
		Train 1&& \textbf{26.604} \ && 2.186e-3 && 7.691e-1 &&  \ 26.595 && 2.190e-3 && 7.688e-1 && \ 26.596  && 2.190e-3 && 7.688e-1\\
		Train 2 && \textbf{29.854} \  && 1.034e-3 && 8.504e-1  && \ 29.835&&1.039e-3&&8.500e-1&& \ 29.835  && 1.039e-3 && 8.500e-1 \\
		%	ALG2 && 26.604  && 2.186e-3 && 7.691e-1 && \ &&26.595&&2.190e-3&&7.688e-1&&\\
		Man 1 && \textbf{13.886} \ &&  4.087e-2 && 5.590e-1   && \ 13.886  && 4.087e-2 && 5.589e-1 && \ 13.886  && 4.087e-2 && 5.590e-1\\
		Man 2  && \textbf{27.472} \ && 1.790e-3  && 6.768e-1   && \ 27.469&&1.791e-3&&6.769e-1  && \ 27.470  && 1.790e-3 && 6.769e-1\\
		Baboon 1 && \textbf{22.961} \ && 5.057e-3 && 5.768e-1  && \ 22.952&&5.067e-3&&5.757e-1&&\  22.952  && 5.067e-3 && 5.758e-1 \\
		Baboon 2 && 24.368 \ && 3.658e-3 && 6.990e-1  && \ 24.371&&3.655e-3&&6.993e-1&&\   \textbf{24.371}&&3.655e-3&&6.993e-1\\
		Sails 1 && \textbf{19.027} \ && 1.251e-2 && 6.147e-1  && \ {19.023}&&1.252-2&&6.133e-1  &&\ {19.023}&&1.252-2&&6.133e-1\\
		Sails 2 && {26.168} \ && 2.416e-3 && 6.915e-1  && \ \textbf{26.173}&&2.414-3&&6.918e-1  &&\ {26.172}&&2.414-3&&6.918e-1\\
		%		Monarch 1 && 13.988 \  && 3.992e-2 && 8.145e-1  && \ 13.989&&3.991e-2&&8.159e-1&& \ \textbf{13.989}  && 3.991e-2 && 8.160e-1\\
		%		Monarch 2 &&30.078 \  && 9.823e-4 && 8.977e-1  && \ 30.107  &&9.756e-4&&8.991e-1&& \ \textbf{30.109} && 9.752e-4 && 8.991e-1\\
		%\midrule
		%\midrule		
		\bottomrule % inserts single-line
	\end{tabular}
	\vspace*{-0.5em}
	 \caption{ {\small \it PSNR results of TGV regularized image denoising. Train 1, Baboon 1, Sails 1, Man 2: with 10\% gaussian noise, $\alpha = [0.2, 0.1]$; Man 1: with 20\% gaussian noise, $\alpha = [0.2, 0.1]$; Train 2, Baboon 2, Sails 2: with 5\% gaussian noise, $\alpha = [0.1, 0.05]$.} }
	\label{tab:inflence:a:w} 
\end{table}

\begin{figure}%[!htb]
	%\graphicspath{{fig//}}
	\begin{center}
		\subfloat[Original image: Turtle, $128\times128$]
		{\includegraphics[width=4.0cm]{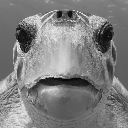}}\quad
		\subfloat[Noisy image: $10\%$ Gaussian]
		{\includegraphics[width=4.0cm]{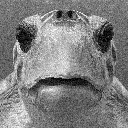}} \quad
		\subfloat[ALM-PDP($10^{-6}$)]
		{\includegraphics[width=4.0cm]{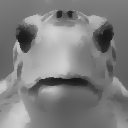}} \\
		\subfloat[Original image: Cameraman, $256\times256$ ]
		{\includegraphics[width=4.0cm]{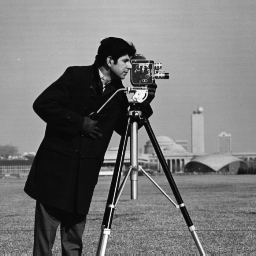}}\quad
		\subfloat[Noisy image: $5\%$ Gaussian]
		{\includegraphics[width=4.0cm]{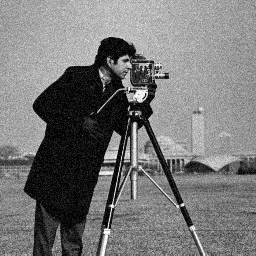}} \quad
		\subfloat[ALM-PDP($10^{-6}$)]
		{\includegraphics[width=4.0cm]{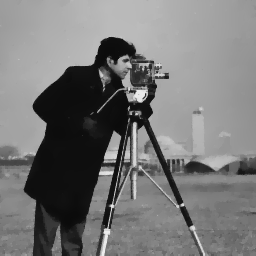}} \\
		\subfloat[Original image: Two macaws, $768\times512$]
		{\includegraphics[width=4.0cm]{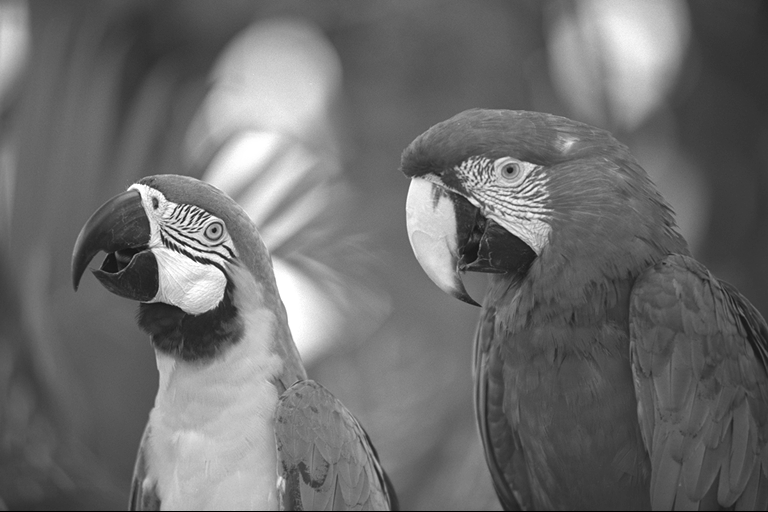}}\quad
		\subfloat[Noisy image: $10\%$ Gaussian]
		{\includegraphics[width=4.0cm]{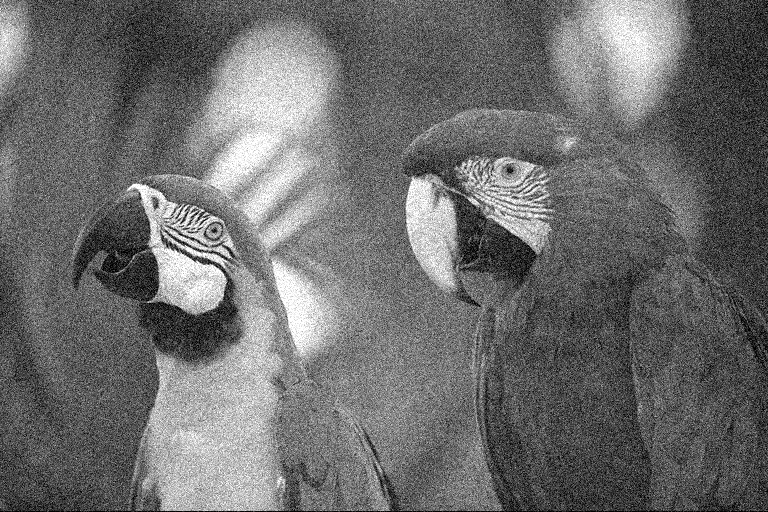}} \quad
		\subfloat[ALM-PDP($10^{-6}$)]
		{\includegraphics[width=4.0cm]{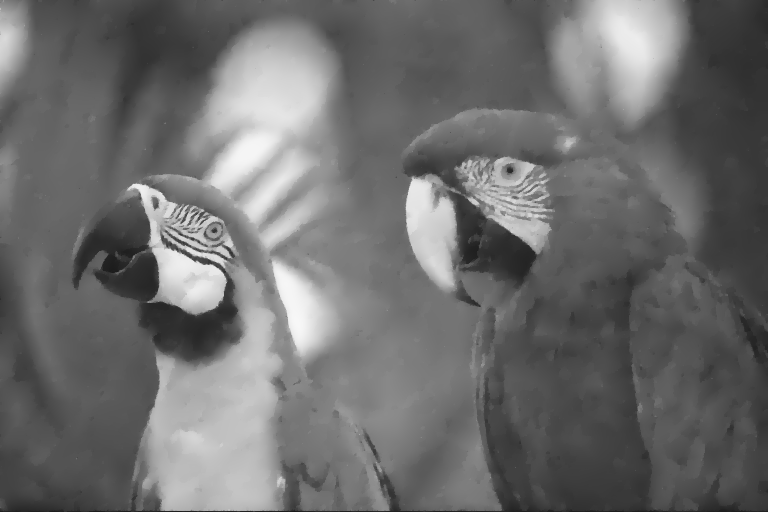}} \\
	\end{center}
	%\caption{Fronalpstock}
	\caption{\small  Images (a), (d), and (g) show the original Turtle, Cameraman and Two macaws images (It is taken from  \url{http://r0k.us/graphics/kodak/kodim23.html} authored by  Kelly S.). (b) and (h) are  noisy versions corrupted by 10\% Gaussian noise; (e) is corrupted by 5\% Gaussian noise. (c), (f), and (i) show the denoised images with ALM-PDP with $\mathfrak{U}^{k+1}<10^{-6}$. Their sizes are: Turle: $128 \times 128$; Cameraman: $256 \times 256$; Two macaws: $768  \times 512$.   }
	\label{fig:main:test}
\end{figure}
\begin{table}%[!htb]
	\centering % centering table
	%\begin{tabular}{rl@{}r@{}rl@{}r@{}rl@{}r@{}rl@{}r@{}rl@{}lrl@{}r@{}r} % creating ten columns
	%\begin{tabular}{lr@lr@r@lr@r@lr@r@lr@r@l} % creating ten columns
	\begin{tabular}{l*{14}{c}r}
		\midrule
		%\cmidrule{14-15}
		& \multicolumn{1}{c}{$k=1$}
		& \multicolumn{1}{c}{$k=2$}
		& \multicolumn{1}{c}{$k=3$}
		& \multicolumn{1}{c}{$k=4$}
		& \multicolumn{1}{c}{$k=5$}
		& \multicolumn{1}{c}{$k=6$}
		& \multicolumn{1}{c}{$k=7$}
	%	& \multicolumn{1}{c}{$k=8$}
		\\
		%\cmidrule{1-15} % inserts single-line
		\midrule
		res($u$)       &1.04e-3  &5.67e-4  &2.48e-4  &2.53e-5   &1.09e-6  &4.70e-7  &9.91e-7 \\
		res($w$)       &3.92e-4  &2.47e-4  &1.29e-4  &8.02e-6  &2.14e-7  &1.43e-7  &2.53e-7\\	
		res($\lambda$) &5.55     &1.15     &2.77e-1  &6.89e-2   &1.65e-2  &3.51e-3  &6.20e-4\\
		res($\mu$)     & 1.09e1  &3.02     &8.03e-1  &2.15e-1  &5.19e-2  &1.02e-2  &1.88e-3 \\
		%%		  Kun   &&3.05e-6     &&2.43e-2  &&2.49e-2&&2.95e-6  &&6.19e-7 &&1.25e-11 \\
		%%		    Hin   &&3.05e-6     &&2.43e-2  &&2.49e-2&&2.95e-6  &&6.19e-7 &&1.25e-11 \\
		Gap            &4.52e-4  &7.82e-5  &1.83e-5  &4.22e-6   &8.48e-7  &1.42e-7  &2.28e-8   \\
		%		%\midrule % inserts single-line
		$N_{SSN}$        &6     &5         &6       &9         &11                  &14   &24   \\
		$N_{ABCG}$       & 10   &13        &27      &43        &75                 &97   &122  \\
		
		\bottomrule % inserts single-line
	\end{tabular}	
	\vspace*{-0.5em}
	\caption{\small \it Image Cameraman denoised by TGV with algorithm ALM-PDP.	$N_{SSN}$ denotes the number of Newton iterations.  $N_{ABCG}$ denotes average number of BiCGSTAB iterations. Here $\sigma_0 = 4$, $\sigma_{k+1} = 4\sigma_k$.}
	\label{tab:cameraman:ssn}
\end{table}

%\begin{figure}%[!htb]
%	%\graphicspath{{fig//}}
%	\begin{center}
%		\subfloat[Original image: Turtle]
%		{\includegraphics[width=4.2cm]{fig/turtle.png}}\quad
%		\subfloat[Noisy image: $10\%$ Gaussian]
%		{\includegraphics[width=4.2cm]{fig/turtlenoise01.png}} \quad
%		\subfloat[ALM-PDP($10^{-6}$)]
%		{\includegraphics[width=4.2cm]{fig/turtlenoise01.png.png}} \\
%		\subfloat[Original image: ]
%		{\includegraphics[width=4.2cm]{}}\quad
%		\subfloat[Noisy image: $10\%$ Gaussian]
%		{\includegraphics[width=4.2cm]{}} \quad
%		\subfloat[ALM-PDP($10^{-6}$)]
%		{\includegraphics[width=4.2cm]{fig/butterflydenoise_iso.png}} \\
%	\end{center}
%	%\caption{Fronalpstock}
%	\caption{Images (a) and (d) show the original $768 \times 512$ Sails and Monarch images. (b) and (e) are  noisy versions corrupted by 10\% Gaussian noise. (c) and (f) show the denoised images with ALM-PDP with Err$(u^{k+1}, \lambda^{k+1})<10^{-6}$. Image Sails is denoised by anisotropic TV while image Monarch is denoised by isotropic TV.}
%	\label{monarch:sails:denoise}
%\end{figure}

 All computations are done on a laptop with Matlab R2019a.
The details of the test images, the corrupted images,   and the restored image can be found in Figure \ref{fig:main:test}. 

From Tables \ref{tab:turtle}, \ref{tab:cameraman} and \ref{tab:parrot}, it can be seen that the proposed ALM-PDP is very efficient, competitive, and robust for different sizes of images. Especially, the proposed ALM-PDP is highly efficient for high accuracy cases. %Since ALM-PDD is not as efficient as ALM-PDP, we omit the corresponding comparisons in Tables \ref{tab:cameraman} and \ref{tab:parrot}. 
Table \ref{tab:cameraman:ssn} shows the efficiency of the primal-dual semismooth Newton solver. 

We would like to emphasize that our stopping criterion and metric \eqref{eq:stop:cri} is very strict and it directly measures the residuals of the optimality conditions for \eqref{eq:tgv-denoising-saddle:original}. It can be seen from Table \ref{tab:turtle}, \ref{tab:cameraman} and \ref{tab:parrot} that while the gap function \eqref{eq:gap} attains a very low accuracy, the  stopping criterion \eqref{eq:stop:cri} just arrives at a middle-level accuracy.  These unusual observations tell that the proposed ALM is quite appropriate for high-accuracy tasks.

\begin{table}%[!htb]
	\centering % centering table
	\begin{tabular}{l@{}l@{}rl@{}rl@{}rl@{}rl@{}rl@{}rl@{}rl@{}l@{}r} % creating ten columns
		%\begin{tabular}{lr@lr@r@lr@r@lr@r@lr@r@l} % creating ten columns
		%\begin{tabular}{l*{5}{c}r}
		\toprule
		%\cmidrule{2-9}
		%& \multicolumn{6}{c}{$\alpha = 0.2$}
		& \multicolumn{6}{c}{TGV: $\alpha = [0.2, 0.1]$ }
		& \multicolumn{8}{c}{Turtle: $128 \times 128$}\\
		%\cmidrule{2-12} %inserting double-line
		\midrule
		%\cmidrule{14-15}
		& \multicolumn{2}{c}{$n(t)$}
		& \multicolumn{2}{c}{$\text{res}(u)$}
		& \multicolumn{2}{c}{$\text{res}(w)$}
		& \multicolumn{2}{c}{$\text{res}(\lambda)$}
		& \multicolumn{2}{c}{$\text{res}(\mu)$}
		& \multicolumn{2}{c}{Gap}
		& \multicolumn{2}{c}{$\text{PSNR}$}
		& \multicolumn{2}{c}{$\mathfrak{U}$}\\
		%\cmidrule{1-15} % inserts single-line
		\midrule
		%ALG2&& 97&(1.66s)  && 466&(7.51s) && 261&(4.53s) && 1189&(20.40s)\\
		%paDR &&69&(1.50s) && 303&(6.55s) && 92&(1.97s) && 420&(9.37)\\
		%\midrule % inserts single-line
		ALM-PDP &&7(20.72s)  && 2.40e-3 &&1.42e-3  &&4.35e-4  &&1.80e-3 && 3.60e-8&&24.93&&1e-4\\
%		ALM-PDD && 7(78.95s)  && 5.15e-15 && $<$eps&& 4.53e-4 &&3.65e-3 &&3.61e-8&&24.93&&1e-4\\
		\midrule
		ALG2 &&3176(33.36s)  && 6.38e-3 &&5.77e-4 &&9.37e-6 &&2.20e-5 &&1.53e-9&&24.93&&1e-4\\
		\midrule 
		\midrule 
		ALM-PDP &&9(114.03.s)  && 8.97e-6 &&5.63e-6  &&1.38e-5 &&3.02e-5&&5.93e-10&&24.93&&1e-6\\
%		ALM-PDD && 9(456.99s)  && 5.19e-15 && $<$eps&& 1.40e-5 &&3.07e-5 &&6.04e-10&&24.93&&1e-6\\
		\midrule
		ALG2 &&10808(1033.93s)  && 6.33e-5 &&4.43e-6 &&4.42e-9 &&8.32e-9 &&2.07e-13&&24.93&&1e-6\\
%		\midrule		% inserts single-line
%		\midrule		
%		ALM-PDP &&11(320.20s)  && 3.17e-6 &&1.77e-6 &&5.11e-7 &&9.26e-7 &&1.05e-11&&24.93&&1e-7\\
%		ALM-PDD && 17(4604.37s) && 5.19e-15 && $<$eps&&1.14e-6 &&4.79e-6&&4.76e-12&&24.93&&1e-7\\
%		\midrule
%		ALG2 &&387109(4064.04s)  && 6.36e-6 &&4.02e-7  &&2.41e-10 &&4.17e-10 &&5.97e-15&&24.93&&1e-7\\
		\bottomrule % inserts single-line
		\bottomrule % inserts single-line
	\end{tabular}
	\vspace*{-0.5em}
	\caption{\small \it For $n(t)$ of the first line of each algorithm, $n$ presents the iteration number for the primal dual gap less than the stopping value; $t$ denotes the CPU time.}
	\label{tab:turtle}
\end{table}

\begin{table}%[!htb]
	\centering % centering table
	\begin{tabular}{l@{}l@{}rl@{}rl@{}rl@{}rl@{}rl@{}rl@{}rl@{}l@{}r} % creating ten columns
		%\begin{tabular}{lr@lr@r@lr@r@lr@r@lr@r@l} % creating ten columns
		%\begin{tabular}{l*{5}{c}r}
		\toprule
		%\cmidrule{2-9}
		%& \multicolumn{6}{c}{$\alpha = 0.2$}
		& \multicolumn{6}{c}{TGV: $\alpha = [0.1,0.05]$ }
		& \multicolumn{9}{c}{Cameraman: $256 \times 256$}\\
		%\cmidrule{2-12} %inserting double-line
		\midrule
		%\cmidrule{14-15}
		& \multicolumn{2}{c}{$n(t)$}
		& \multicolumn{2}{c}{$\text{res}(u)$}
		& \multicolumn{2}{c}{$\text{res}(w)$}
		& \multicolumn{2}{c}{$\text{res}(\lambda)$}
		& \multicolumn{2}{c}{$\text{res}(\mu)$}
		& \multicolumn{2}{c}{Gap}
		& \multicolumn{2}{c}{$\text{PSNR}$}
		& \multicolumn{2}{c}{$\mathfrak{U}$}\\
		%\cmidrule{1-15} % inserts single-line
		\midrule
		%ALG2&& 97&(1.66s)  && 466&(7.51s) && 261&(4.53s) && 1189&(20.40s)\\
		%paDR &&69&(1.50s) && 303&(6.55s) && 92&(1.97s) && 420&(9.37)\\
		%\midrule % inserts single-line
		ALM-PDP &&7(66.15s)  && 5.71e-3 &&3.84e-3  &&6.49e-4 &&1.90e-3&& 2.32e-8&&30.16&&1e-4\\
	%	ALM-PDD && 7(455.75s)  && 1.13e-14 && $<$eps&& 2.03e-3&&2.79e-3&&2.29e-8&&30.16&&1e-4\\
		\midrule
		ALG2 &&3115(77.45s)  && 1.27e-2 &&8.31e-4 &&1.02e-5 &&4.65e-5 &&1.62e-9&&30.16&&1e-4\\
		\midrule 
		\midrule 
		%	ALM-PDP &&7(185.07s)  && 4.04e-6 &&1.80e-4  &&9.79e-7 &&1.53e-6&&3.14e-10&&29.89&&1e-6\\
		ALM-PDP &&9(369.39s)  && 2.25e-5 &&1.27e-5  &&1.49e-5 &&6.64e-5&&5.58e-10&&30.16&&1e-6\\
	%	ALM-PDD && 9(1985.00s)  && 1.13e-14 && $<$eps&& 1.88e-5 &&7.13e-5 &&5.61e-10&&30.16&&1e-6\\
		\midrule
		ALG2 &&81706(2000.26s)  && 1.26e-4 &&9.11e-6 &&1.60e-8 &&3.06e-8 &&2.85e-13&&30.16&&1e-6\\
%		\midrule		% inserts single-line
%		\midrule		
%		ALM-PDP &&11(1498.96s)  && 6.65e-6 &&3.57e-6 &&4.44e-7 &&2.44e-6 &&1.34e-11&&30.16&&1e-7\\
%		ALM-PDD && --- && --- && ---&&--- &&--- &&---&&---&&1e-7\\
%		\midrule
%		ALG2 &&382833(9298.64s)  && 1.30e-5 &&5.14e-7  &&7.93e-11 &&4.27e-10 &&1.95e-15&&30.16&&1e-7\\
		\bottomrule % inserts single-line
		\bottomrule % inserts single-line
	\end{tabular}
	\vspace*{-0.5em}
	\caption{\small \it For $n(t)$ of the first line of each algorithm, $n$ presents the iteration number for the primal-dual gap less than the stopping value, $t$ denoting the CPU time.  }
	\label{tab:cameraman}
\end{table}

\begin{table}%[!htb]
	\centering % centering table
	\begin{tabular}{l@{}l@{}rl@{}rl@{}rl@{}rl@{}rl@{}rl@{}rl@{}l@{}r} % creating ten columns
		%\begin{tabular}{lr@lr@r@lr@r@lr@r@lr@r@l} % creating ten columns
		%\begin{tabular}{l*{5}{c}r}
		\toprule
		%\cmidrule{2-9}
		%& \multicolumn{6}{c}{$\alpha = 0.2$}
		& \multicolumn{6}{c}{ TGV: $\alpha = [0.2,0.1]$ }
		& \multicolumn{9}{c}{Two macaws: $768 \times 512$}\\
		%\cmidrule{2-12} %inserting double-line
		\midrule
		%\cmidrule{14-15}
		& \multicolumn{2}{c}{$n(t)$}
		& \multicolumn{2}{c}{$\text{res}(u)$}
		& \multicolumn{2}{c}{$\text{res}(w)$}
		& \multicolumn{2}{c}{$\text{res}(\lambda)$}
		& \multicolumn{2}{c}{$\text{res}(\mu)$}
		& \multicolumn{2}{c}{Gap}
		& \multicolumn{2}{c}{$\text{PSNR}$}
		& \multicolumn{2}{c}{$\mathfrak{U}$}\\
		%\cmidrule{1-15} % inserts single-line
		\midrule
		%ALG2&& 97&(1.66s)  && 466&(7.51s) && 261&(4.53s) && 1189&(20.40s)\\
		%paDR &&69&(1.50s) && 303&(6.55s) && 92&(1.97s) && 420&(9.37)\\
		%\midrule % inserts singl\sqrt{}e-line
		ALM-PDP &&3(238.01s)&& 2.43e-2 &&1.39e-2  &&6.55e-2 &&2.28e-5&& 1.70e-6&&31.53&&1e-3\\
	%	ALM-PDD && 6(1382.05s)  && 2.26e-14 && $<$eps&&1.85e-1&&9.14e-2&&5.72e-7&&31.53&&1e-3\\
		\midrule
		ALG2 &&683(99.84s)   && 2.57e-1 &&3.01e-2 &&3.14e-3 &&7.98e-3 &&1.63e-7&&31.52&&1e-3\\
		\midrule 
		\midrule 
		ALM-PDP &&8(1499.23s)  && 2.79e-4 &&1.02e-4 &&4.52e-4 &&1.66e-3&&9.09e-9&&31.53&&1e-5\\
	%	ALM-PDD &&8(11171.49s)&& 2.58e-14 && $<$eps&& 4.93e-4 &&1.35e-3 &&9.08e-9&&31.53&&1e-5\\
		\midrule
		ALG2 &&15638(2332.49s)  && 2.82e-3 &&1.55e-4 &&7.40e-7 &&2.48e-6 &&1.66e-11&&31.53&&1e-5\\
%		\midrule		% inserts single-line
%		\midrule		
%		ALM-PDP &&9(5060.97s) && 2.08e-6 &&6.60e-7 &&6.96e-5 &&2.24e-4 &&1.23e-9&&31.53&&1e-6\\
%		ALM-PDD &&--- && --- &&--- &&--- &&--- &&---&&---&&1e-6\\
%		%	ALM-PDD && 7(472.86s) && 1.27e-10 && 1.08e-10&& 8.64e-10 &&6.14e-10 &&7.12e-14&&18.90&&1e-8\\
%		\midrule
%		ALG2 &&--- && --- &&--- &&--- &&--- &&---&&---&&1e-6\\
		%		ALG2 &&77618(11779.30s)  && 2.85e-4 &&1.35e-5  &&1.64e-8 &&4.80e-8 &&1.65e-13&&31.53&&1e-6\\%7.63e-17
		\bottomrule % inserts single-line
		\bottomrule % inserts single-line
	\end{tabular}
	\vspace*{-0.5em}
	\caption{\small \it For $n(t)$ of the first line of each algorithm, $n$ presents the iteration number for the primal-dual gap less than the stopping value, $t$ denoting the CPU time. The notation ``$<$eps" denotes the corresponding quality less than the machine precision in Matlab ``eps".  }
	\label{tab:parrot}
\end{table}
\section{Discussion and Conclusions}\label{sec:conclude}

In this paper, for TGV regularized image restoration, we proposed efficient primal-dual semismooth Newton based ALM algorithms. The corresponding asymptotic local convergence rate along and the global convergence are  discussed by the metric subregularity of the dual functions. Numerical tests show the efficiency of the proposed algorithm.  We would like to emphasize that efficient preconditioners for solving the linear systems involving the Newton updates are very important and desperately needed. Designing efficient preconditioners for the Krylov space based BiCGSTAB especially for large step size $\sigma_k$ is very challenging and useful.

\noindent
{\small
	\textbf{Acknowledgements}
	The author was supported by Beijing Natural Science Foundation No. Z210001.
	The author also acknowledges the support of
	NSF of China under Grant No. \,11701563 and  the support from the program of China Scholarship Council (CSC) under No. 201906365017.	The author is very grateful to Prof. Defeng Sun of Hong Kong Polytechnic University
	for introducing the framework on semismooth Newton based ALM developed by him and his collaborators to the author. The author is also very grateful to Prof. Michael Hinterm{\"u}ller for the discussion on the primal-dual semismooth Newton method during the author's visit to Weierstrass Institute for Applied Analysis and Stochastics (WIAS) supported by Alexander von Humboldt Foundation during 2017. The author is also very grateful to Prof. Kristian Bredies of the University of Graz for the private communications on TGV.
}

%\bibliographystyle{plain}
%\bibliography{paper}

\end{document}